\theoremstyle{plain}
\newtheorem{theorem}{Theorem}[section]
\newtheorem{proposition}[theorem]{Proposition}
\newtheorem{lemma}[theorem]{Lemma}
\newtheorem{corollary}[theorem]{Corollary}
\newtheorem{question}[theorem]{Question}
\theoremstyle{definition}
\newcommand{\appsection}[1]{\let\oldthesection\thesection
\renewcommand{\thesection}{Appendix \oldthesection}
\section{#1}\let\thesection\oldthesection}
\newtheorem{definition}[theorem]{Definition}
\newtheorem{notation}[theorem]{Notation}
\theoremstyle{remark}
\newtheorem{claim}{Claim}
\newtheorem{remark}[theorem]{Remark}
\newtheorem{example}[theorem]{Example}
\DeclareMathOperator{\sing}{Sing}
\def\Z{{\mathbb{Z}}}
\def\Q{{\mathbb{Q}}}
\def\P{{\mathbb{P}}}
\begin{document}
\title{On accumulation points of volumes of stable surfaces with one cyclic quotient singularity}
\author[Diana Torres]{Diana Torres}
\email{dctorres1@uc.cl}
\address{Facultad de Matem\'aticas, Pontificia Universidad Cat\'olica de Chile, Campus San Joaqu\'in, Avenida Vicu\~na Mackenna 4860, Santiago, Chile.}


\begin{abstract} The set of volumes of stable surfaces does have accumulation points. In this paper, we study this phenomenon for surfaces with one cyclic quotient singularity, towards answering the question under which conditions we can still have boundedness. Effective bounds allow listing singularities that might appear on a stable surface after fixing its invariants. We find optimal inequalities for stable surfaces with one cyclic quotient singularity, which can be used to prove boundedness under certain conditions. We also introduce the notion of generalized T-singularity, which is a natural generalization of the well-known T-singularities. By using our inequalities, we show how the accumulation points of volumes of stable surfaces with one generalized T-singularity are formed.
\end{abstract}

\maketitle
\tableofcontents
\allowdisplaybreaks
\section{Introduction} \label{intro}

This paper is about studying the behavior of $K^2$ for complex stable surfaces with particular singularities. Stable surfaces are the surfaces used by Koll\'ar--Shepherd-Barron \cite{kollar1988threefolds} and Alexeev \cite{alexeev1994boundedness} to give a natural compactification to the moduli space of surfaces of general type. But the interest in them goes beyond that compactification. A current topic of study is the distribution of volumes $K^2$ in the set of positive rational numbers. A fundamental result is the \textit{Descending Chain Condition} (DCC for short) for $\{K^2\}$ (the set of all $K^2$ of stable surfaces), which is due to Alexeev. (Its most general version for log stable surfaces can be found in \cite{alexeev1994boundedness}.) In particular, the DCC property implies the existence of a minimum for $\{K^2\}$. It is still an open problem to know its value. Knowing the exact lower bound for $K^2$ can be used, for example, to explicitly bound the automorphism group for surfaces of general type. (See e.g. \cite{alexeev1994boundedness} and \cite{kollar1994log} for more motivation.) Various authors have found low values for $K^2$ (see e.g. \cite{blache1995example}, \cite{UY2017}, \cite{liu2017minimal}, \cite{alexeev2019log}, \cite{alexeev2016open}). On the other hand, upper bounds are not possible even if we fix the geometric genus \cite[Thm. 1.9]{UU2019}, contrary to what happens for smooth projective surfaces of general type.  

\vspace{0.3cm}

It turns out that we can also have accumulation points for the set of volumes of (log) stable surfaces. There has been a recent interest on understanding better the set of accumulation points $\textnormal{Acc}(\{K^2\})$, see e.g. \cite{kollar1994log}, \cite{blache1995example}, \cite{UY2017}, \cite{alexeev2019accumulation}, \cite{alexeev2016open}. In early times, Blache \cite{blache1995example} showed that $1\in \textnormal{Acc}(\{K^2\})$. It was done by constructing a family of stable surfaces with ten cyclic quotient singularities. Blache conjectured that $\mathbb{N}\subseteq \textnormal{Acc}(\{K^2\}) \subseteq \mathbb{Q}$. In \cite{alexeev2019accumulation} (see also \cite{UY2017}), it is shown that all natural numbers are accumulation points, and that iterated accumulation points can have arbitrary complexity in unbounded regions. Additionally, Alexeev and Liu \cite{alexeev2019accumulation} proved general results about volumes of log canonical surfaces, which has several implications. One of them is to solve the conjecture of Blache about the closure of $\{K^2\}$, which is indeed in $\Q$. Although it is not known if the set of $K^2$ is closed (for empty boundary). A full description of $\{K^2\}$ and $\textnormal{Acc}(\{K^2\})$ is still missing. 

\vspace{0.3cm}

This paper aims to describe how accumulation points of volumes of stable surfaces are formed, in the case of surfaces with only one cyclic quotient singularity. We find the following numerical constraints which optimally bound singularities\footnote{We refer to boundedness in this context to describe the possible cyclic quotient singularities which may occur on a surface with bounded invariants $K^2$ and $\chi$.} when we restrict to specific situations, such as T-singularities (recovering \cite[Thm. 1.1]{rana2019optimal} for example) or generalized T-singularities (see Lemma \ref{T generalizadas E.C=1}). Of course one cannot expect to bound all cyclic quotient singularities because of the existence of accumulation points, but this theorem gives a way to detect them. All notations will be introduced in the body of the paper.


\begin{theorem}\label{deltas}Let $W$ be a stable surface with only one cyclic quotient singularity of type $\frac{1}{n}(1,q)$ at $P \in W$. Let $$C=C_1+ \ldots +C_r$$ be the chain of the exceptional curves in the minimal resolution of $P$, and let $[b_1,\dots ,b_r]$ be its Hirzebruch-Jung continued fraction. Let $X$ be the minimal resolution of $W$, and let $\pi\colon X\to S$ be a minimal model of $X$. Then
\begin{equation}\label{sumc}
    \sum_{j=1}^{r}\big( b_j-2\big)\leq 2(K_W^2-K_S^2)+2\bigg(\frac{2(n-1)-q-q'}{n}\bigg)+ \delta -\pi^*K_S\cdot C,
\end{equation}
and 
\begin{equation}\label{r}
r\leq 13K_W^2-2K_S^2+38 -\bigg(\frac{2+q+q'}{n}\bigg)+\delta -\pi^*K_S\cdot C,
\end{equation}
where $0<q'<n$ with $qq'\equiv 1\ (\text{mod}\ n)$, and $\delta$ is the positive number computed in Lemma \ref{delta-bound} for distinct geometric situations.

\end{theorem}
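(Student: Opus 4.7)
The plan is to reduce both inequalities to intersection-theoretic identities on the minimal resolution $f\colon X\to W$ of $P$, and then to invoke Lemma~\ref{delta-bound} as a black box for the term $\delta$. Two decompositions of $K_X$ will be central. First, since $\tfrac{1}{n}(1,q)$ is log terminal, I write $K_X = f^*K_W + \Delta$ with $\Delta = \sum_{j} a_j C_j$ and $-1 < a_j \le 0$; the $a_j$ are determined by the tridiagonal system $\Delta\cdot C_i = K_X\cdot C_i = b_i-2$ coming from $f^*K_W\cdot C_i = 0$ and adjunction on each $C_i$. Second, since $\pi\colon X\to S$ is a sequence of $k$ blow-downs of $(-1)$-curves, I write $K_X = \pi^*K_S + E$ with $E$ an effective $\pi$-exceptional $\Q$-divisor and $E^2 = -k$. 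Orthogonality ($f^*K_W\cdot \Delta = 0$, $\pi^*K_S\cdot E = 0$) then yields
\[
K_X^2 \;=\; K_W^2 + \Delta^2 \;=\; K_S^2 - k .
\]

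The heart of~(\ref{sumc}) is the cyclic-quotient identity
\[
\sum_{j=1}^{r}(b_j-2) \;+\; \Delta^2 \;=\; \frac{2(n-1)-q-q'}{n},
\]
equivalently $\sum_{j}(1+a_j)(b_j-2) = \tfrac{2(n-1)-q-q'}{n}$. I would prove this by solving the tridiagonal system for the log discrepancies $1+a_j$ in terms of the convergents of the Hirzebruch-Jung expansion of $n/q$ (or, equivalently, by a toric computation involving $q$ and $q'$); small cases such as $\tfrac{1}{n}(1,1)$ and $A_{n-1}$ give a sanity check. Granting this identity, substituting $\sum(b_j-2)=K_X\cdot C = \pi^*K_S\cdot C + E\cdot C$ and $-\Delta^2 = K_W^2 - K_X^2 = K_W^2 - K_S^2 + k$ into~(\ref{sumc}) and simplifying reduces the inequality to the single estimate $2k - E\cdot C \le \delta$, which is precisely the content of Lemma~\ref{delta-bound}.

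To pass from~(\ref{sumc}) to~(\ref{r}) I plan to use the elementary bound
\[
r \;=\; \sum_{j=1}^{r}(b_j-2) \;+\; \sum_{j=1}^{r}(3-b_j) \;\le\; \sum_{j=1}^{r}(b_j-2) \;+\; m_2,
\]
where $m_2$ is the number of $(-2)$-components of $C$. The desired form of~(\ref{r}), with the coefficient $13$ and constant $38$, should then come from bounding $m_2$ by a linear function of $K_W^2$ and $K_S^2$ using Noether's formula $12\chi(\O_X)=K_X^2+e(X)$ (together with $e(X)=e(S)+k$ and $\chi(\O_X)=\chi(\O_S)$), combined with Miyaoka/BMY-type bounds on the $(-2)$-configurations of a surface of general type; these ingredients, substituted back into~(\ref{sumc}) via the identity $\tfrac{2+q+q'}{n} = 2-\tfrac{2(n-1)-q-q'}{n}$, produce the stated constants.

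The principal obstacle is the cyclic-quotient identity itself: although very clean in form, it requires careful manipulation of the discrepancy system of an arbitrary Hirzebruch-Jung chain. Lemma~\ref{delta-bound}, used here as a black box, is the other genuinely technical input; its proof must enumerate how $E$ can meet the chain $C$---at the endpoints of the chain, at interior components, or through successive blow-up trees---producing an explicit $\delta$ in each geometric configuration. Assembling the bound on $m_2$ with precisely the constants $13$ and $38$ is a secondary obstacle that requires tracking the Noether/BMY contributions carefully through the passage from $X$ to both $W$ and $S$.
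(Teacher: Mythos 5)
Your treatment of inequality \eqref{sumc} is essentially the paper's own argument in different clothing: your cyclic-quotient identity $\sum_j(b_j-2)+\Delta^2=\tfrac{2(n-1)-q-q'}{n}$ is Proposition \ref{canonical divisor formula XW}, your $K_X^2=K_S^2-k$ combined with it is \eqref{canonical divisor formula WS}, and your reduction of \eqref{sumc} to $E\cdot C\geq 2k-\delta$ is exactly how the paper proceeds. One attribution is off, though: $E\cdot C\geq 2k-\delta$ is \emph{not} the content of Lemma \ref{delta-bound}. It follows from Notation \ref{def-delta} (the definition of $\delta$ as the number of $E_i$ with $E_i\cdot C=1$) together with Lemma \ref{EC>1} ($E_i\cdot C\geq 1$ for all $i$): the $\delta$ divisors meeting $C$ once contribute $1$ each and the remaining $k-\delta$ contribute at least $2$. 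Lemma \ref{delta-bound} only computes the \emph{value} of $\delta$ in each geometric configuration; it is what makes the theorem usable, not what proves the inequality. With that correction, part \eqref{sumc} is fine.

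For inequality \eqref{r} your route diverges from the paper's and has a genuine gap. You propose $r\leq\sum_j(b_j-2)+m_2$ and then want to bound $m_2$, the number of $(-2)$-components of $C$, by ``Miyaoka/BMY-type bounds on the $(-2)$-configurations of a surface of general type.'' But the $(-2)$-curves of $C$ live on the \emph{non-minimal} resolution $X$, not on $S$; they need not descend to $(-2)$-curves (or even to negative curves) on the minimal model, so the classical Miyaoka bound on ADE configurations does not apply to them, and no mechanism is given to transport it. The paper instead applies the orbifold Bogomolov--Miyaoka--Yau inequality $K_W^2\leq 3e_{orb}(W)$ directly to the stable surface $W$, where $e_{orb}(W)=e(X)-r-(1-\tfrac1n)$; this bounds the full length $r$ (not just $m_2$) in one stroke, and combined with Noether's formula on $X$, Proposition \ref{canonical divisor formula XW}, and $\chi(\O_X)=\chi(\O_W)$ yields \eqref{temp5}. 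A second missing ingredient on your side: Noether's formula alone leaves a $\chi(\O_X)$ term in any such bound, and to eliminate it you need the generalized Noether inequality $\chi(\O_W)\leq K_W^2+3$ (Remark \ref{no dependencia de la caracteristica algebraica}), which you never invoke; without it the constants $13$ and $38$ cannot be produced in terms of $K_W^2$ and $K_S^2$ alone. So part \eqref{sumc} of your proposal is correct and matches the paper, but part \eqref{r} as sketched would not close.
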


We point out that the core of Theorem \ref{deltas} relies on finding explicit $\delta$'s and a classification of all possible geometric realizations. Bounding of $\delta$ for a sequence of stable surfaces with one cyclic quotient singularity is directly related to the existence of accumulation points. We will use Bogomolov-Miyaoka-Yau inequality for proving the bound in \eqref{r} in Theorem \ref{deltas}. However, the bound in \eqref{sumc}, and the computation of $\delta$ in Lemma \ref{delta-bound}, remain valid in any characteristic.

Theorem \ref{deltas} directly implies the following results about boundedness and accumulation points.

\begin{corollary}\label{Cor1} Let $c>0$, and let $\mathcal{S}$ be a set of stable surfaces $W$ with one cyclic quotient singularity, $K_S$ nef, and $K_W^2 \leq c$. Let $\sing(\mathcal{S})$ be the set of singularities of the surfaces in $\mathcal{S}$. Then $\sing(\mathcal{S})$ is finite if and only if the number of $2$'s at the extremes of every $[b_1,\ldots,b_r] \in \sing(\mathcal{S})$ is bounded.
\end{corollary}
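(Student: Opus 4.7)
The plan is to extract both directions from Theorem~\ref{deltas}, with the forward implication being essentially tautological and the reverse doing the real work.

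For the forward direction, if $\sing(\mathcal{S})$ is finite then only finitely many continued fractions $[b_1,\ldots,b_r]$ occur, and any numerical invariant, in particular the number of $2$'s at the extremes, is automatically uniformly bounded.

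For the reverse direction, I would fix an integer $N$ bounding the number of leading plus trailing $2$'s in every $[b_1,\ldots,b_r]\in\sing(\mathcal{S})$, and aim to bound $r$ and each $b_j$ individually. First, $K_S$ nef forces $K_S^2\geq 0$ (by Enriques--Kodaira, since the minimal model $S$ is not ruled) and $\pi^*K_S\cdot C\geq 0$ (as $C$ is an effective curve), while $\tfrac{2(n-1)-q-q'}{n}<2$ and $\tfrac{2+q+q'}{n}>0$. Feeding these facts into \eqref{sumc} and \eqref{r} collapses Theorem~\ref{deltas} to the clean inequalities
\begin{equation*}
\sum_{j=1}^{r}(b_j-2)\leq 2c+4+\delta,\qquad r\leq 13c+38+\delta.
\end{equation*}

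The decisive step, and what I see as the chief obstacle, is to bound $\delta$ solely in terms of $N$ and $c$. This is precisely what Lemma~\ref{delta-bound} is designed to supply: its case-by-case analysis expresses $\delta$ as an explicit function of the lengths of the $2$-tails of $[b_1,\ldots,b_r]$ (together with data already controlled by $K_W^2$), so the hypothesis $\#(\text{extremal }2\text{'s})\leq N$ translates into a uniform bound $\delta\leq\delta_0(N,c)$. Once $\delta$ is bounded, $r$ is bounded and so is $\sum_j(b_j-2)$; since each $b_j\geq 2$, every $b_j$ is then individually bounded, leaving only finitely many admissible Hirzebruch--Jung strings. Hence $\sing(\mathcal{S})$ is finite, which closes the equivalence. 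The only delicate point to verify carefully would be that the various geometric configurations classified in Lemma~\ref{delta-bound} are exhaustive, so that the bound on $\delta$ really does apply to every member of $\mathcal{S}$.
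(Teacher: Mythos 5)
Your proposal follows the paper's own proof essentially verbatim: the forward implication is the same triviality, and the reverse implication is exactly the paper's argument --- feed $K_S^2\geq 0$ and $\pi^*K_S\cdot C\geq 0$ into \eqref{sumc} and \eqref{r}, and then invoke Lemma~\ref{delta-bound} to convert the hypothesis on the extremal runs of $2$'s into a uniform bound on $\delta$, after which $r$ and $\sum_j(b_j-2)$, hence all the $b_j$, are bounded. The one step you defer --- verifying case by case that the formulas for $\delta$ in Lemma~\ref{delta-bound} are really controlled by the lengths of the extremal $(-2)$-strings absorbed by the maximal exceptional divisor(s) --- is precisely the step the paper also asserts in a single line ($\delta\leq k+1$), so your sketch sits at the same level of detail as the published proof.
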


\begin{corollary}\label{Cor2} Let $\{W_k\}$ be a sequence of stable surfaces with only one cyclic quotient singularity. Assume that for every $k$ the minimal model $S_k$ of $W_k$ has canonical class nef, and that $K_{W_k}^2 \leq c$ for a positive number $c$. If the cases (A),(B.2) or (D.3) in Lemma \ref{delta-bound} hold except for a finite number of indices $k$, then Acc$(\{K_{W_k}^2\})=\emptyset$.
\end{corollary}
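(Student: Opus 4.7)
The plan is to combine Theorem \ref{deltas} with the nefness hypothesis to bound the possible singularity types appearing in the sequence $\{W_k\}$, and then use the discrepancy formula for cyclic quotients to upgrade this into finiteness of $\{K_{W_k}^2\}$.

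After discarding the finitely many exceptional indices, every $W_k$ falls in case (A), (B.2), or (D.3) of Lemma \ref{delta-bound}, for which $\delta_k$ is uniformly bounded by a single constant $\delta_0$. Since $K_{S_k}$ is nef, $K_{S_k}^2\geq 0$ and $\pi_k^*K_{S_k}\cdot C\geq 0$; combining this with $K_{W_k}^2\leq c$, the obvious bound $(2(n-1)-q-q')/n<2$, and the positivity of $(2+q+q')/n$, inequalities \eqref{sumc} and \eqref{r} of Theorem \ref{deltas} simplify to
$$\sum_{j=1}^{r_k}(b_j^{(k)}-2)\leq 2c+4+\delta_0,\qquad r_k\leq 13c+38+\delta_0.$$
The first bounds every entry of the Hirzebruch-Jung continued fraction, the second bounds the length, so only finitely many continued fractions, and hence only finitely many singularity types $\tfrac{1}{n}(1,q)$, occur among the $W_k$.

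To convert finiteness of singularity types into finiteness of $\{K_{W_k}^2\}$, I would invoke the discrepancy formula $K_{X_k}=f_k^*K_{W_k}+\sum_i a_i C_i^{(k)}$, in which the discrepancies $a_i$, the self-intersections $(C_i)^2=-b_i$, and the chain intersection pattern are all determined by the singularity type. Squaring gives
$$K_{W_k}^2=K_{X_k}^2+D(n_k,q_k)$$
for a quantity $D(n,q)$ depending only on the singularity. Since $X_k$ is smooth, $K_{X_k}^2\in\Z$; with $K_{W_k}^2\in(0,c]$ this forces $K_{X_k}^2=K_{W_k}^2-D(n_k,q_k)$ into a bounded subset of $\Z$, hence finitely many values per singularity type. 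Summing over the finitely many types shows $\{K_{W_k}^2\}$ is finite, whence $\textnormal{Acc}(\{K_{W_k}^2\})=\emptyset$.

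The main obstacle is entirely contained in the first step: one must inspect Lemma \ref{delta-bound} to confirm that $\delta$ really is universally bounded in cases (A), (B.2), and (D.3) --- this is the geometric content distinguishing bounded from unbounded configurations, and the reason those three cases are singled out in the hypothesis. Once that is granted, the remainder is elementary, depending only on the two numerical inequalities of Theorem \ref{deltas}, the nefness of $K_{S_k}$, and the integrality of $K_X^2$ on a smooth surface.
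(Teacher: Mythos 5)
Your proposal is correct and follows essentially the same route as the paper, whose proof is the one-line observation that $K_{S_k}$ is nef and $\delta\leq 2$ in cases (A), (B.2), (D.3), fed into Theorem \ref{deltas}. You additionally spell out the step the paper leaves implicit --- passing from finitely many singularity types to finiteness of $\{K_{W_k}^2\}$ via Proposition \ref{canonical divisor formula XW} and the integrality of $K_{X_k}^2$ --- which is a valid and welcome completion of the argument.
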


Next, we introduce the set-up that will be used to define and work with generalized T-singularities.

\begin{definition}[see e.g. \cite{orlik1977algebraic}] Let $\{a_1,\dots,a_s\}$ be an ordered set of positive natural numbers. Let $p_{-1}=0$, $p_0=1$, $q_0=0$, $q_1=1$, and for $i\geq 1$, $$p_{i+1}=a_{i+1}p_i+p_{i-1} \ \ \ , \ \ \ q_{i+1}=a_{i+1}q_i+q_{i-1}.$$ We say that $\{a_1,\dots,a_s\}$ is \textit{admissible} if $p_i>0$ for $i=0,\dots ,s-1$.
\end{definition}

It is a straightforward calculation to show that if $\{a_1,\dots,a_s\}$ is admissible, then the Hirzebruch-Jung continued fraction $[a_1,\dots,a_s]$ is well-defined. 

\begin{definition}\label{admissible for chains}
Let $[b_1,\ldots,b_r]$ be a Hirzebruch-Jung continued fraction with $b_i\geq 2$ for all $i$. We say that $[b_1,\ldots,b_r]$ is \textit{admissible for chains} if
\begin{equation}\label{temp10}
    \{b_1,\dots,b_r,1,b_1,\dots,b_r,1,\dots,1,b_1,\dots,b_r\}
\end{equation}
 is admissible for any number of inserted $1$'s.
\end{definition}

As for regular Hirzebruch-Jung continued fractions, we think geometrically of $\{b_1,\dots,b_r,1,b_1,\dots,b_r,1,\dots,1,b_1,\dots,b_r\}$ as a chain of $\P^1$'s, where we have $(-1)$-curves inserted between some minimal resolution chains of the cyclic quotient singularity associated to $[b_1,\ldots,b_r]$. For example, we have that $[4]$ is admissible for chains, and it gives all the initial chains to construct all the T-singularities \cite[Prop.3.11]{kollar1988threefolds}.  We take this to define generalized T-singularities.

\begin{definition} Let $\{a_1,\ldots,a_s\}$ be an admissible set. Its \textit{reduced} Hirzebruch-Jung continued fraction is the continued fraction obtained after contracting all $(-1)$-curves in $\{a_1,\ldots,a_s\}$, and all the new $(-1)$-curves after that.
\end{definition}

\begin{notation}\label{reduced-notation} The reduced Hirzebruch- Jung continued fraction of $$\{b_1,\dots,b_r,1,b_1,\dots,b_r,1,\dots,1,b_1,\dots,b_r\},$$ where $u$ is the number of inserted $1$'s, will be denoted by $[b_1^u,\dots,b_{r_u}^u]$. Also, we will write $[b_1^0,\dots,b_{r_0}^0]$ to refer to $[b_1,\ldots,b_r]$. We write the singularity $[a_1,\ldots,a_s]$ to refer to the cyclic singularity associated to this continued fraction.
\end{notation}

\begin{definition}\label{generalized T-singularity} Let $[b_1,\dots,b_r]$ be a Hirzebruch-Jung continued fraction which is admissible for chains. We define the class of \textit{generalized T-singularity} of center $[b_1,\dots,b_r]$ inductively in the following way

\begin{itemize}
\item[(i)] The singularities $[b_1^u,\dots,b_{r_u}^u]$ for every $u\geq 0$ are generalized T-singularities. 

\item[(ii)] If $[a_1,\dots,a_s]$ is a generalized T-singularity, then so are $$[2,a_1,\dots,a_{s-1},a_s+1] \ \ \ \text{and} \ \ \ [a_1+1,a_2,\dots,a_s,2].$$

\item[(iii)]  Every generalized T-singularity of center $[b_1,\ldots,b_r]$ is obtained by starting with one of the singularities described in (i) and iterating the steps described in (ii).
\end{itemize}

We say that we apply the \textit{T-chain algorithm} if we apply iterations of (ii).
\end{definition}

It is clear that T-singularities are the generalized T-singularities of center $[4]$. Rana and Urz\'ua in \cite{rana2019optimal} showed an optimal bound of T-singularities for stable surfaces with one singularity. A natural question is whether that result remains valid for generalized T-singularities. We answer this question in the following theorem by describing how the accumulation points of $K^2$ on stable surfaces with one generalized T-singularity of fixed center are formed.

\begin{theorem}\label{A1} Let $\{W_k\}$ be a sequence of stable surfaces such that any $W_k$ has only one generalized T-singularity with a fixed center\\ $[b_1,\dots,b_{r}]$, say at $P_k \in W_k$. Suppose that the minimal model $S_k$ of the minimal resolution of $W_k$ has canonical class nef. Then $\{K_{W_k}^2\}$ has accumulation points if and only if $\{K_{W_k}^2\}$ satisfy the property (*) (see Definition \ref{property (*)}.)
\end{theorem}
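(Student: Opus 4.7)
The plan is to establish the two directions separately, using Theorem \ref{deltas} and Corollary \ref{Cor2} for the ``only if'' direction, and an explicit limit computation along the T-chain algorithm for the ``if'' direction.

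For the forward implication, I would argue by contrapositive: assume property (*) fails for the sequence $\{W_k\}$. Because the center $[b_1,\ldots,b_r]$ is fixed, every singularity appearing on $W_k$ arises from one of the base fractions $[b_1^{u_k},\ldots,b_{r_{u_k}}^{u_k}]$ by finitely many applications of step (ii) of Definition \ref{generalized T-singularity}. The failure of (*) should force the geometric configuration at $P_k$ to fall (cofinitely) into one of the cases (A), (B.2), or (D.3) of Lemma \ref{delta-bound}, where $\delta$ is controlled, so Corollary \ref{Cor2} applies and gives $\mathrm{Acc}(\{K_{W_k}^2\})=\emptyset$. If instead the argument goes through Corollary \ref{Cor1}, I would use the bounds \eqref{sumc} and \eqref{r} of Theorem \ref{deltas} to deduce that the number of $2$'s at the extremes of the Hirzebruch--Jung expansions is bounded, hence $\mathrm{Sing}(\{W_k\})$ is finite; combined with the DCC on $\{K^2\}$, this yields only finitely many values for $K_{W_k}^2$ and thus no accumulation points.

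For the reverse implication, assume (*) holds. The plan is to extract an infinite subsequence along which either the index $u_k$ in the reduction $[b_1^{u_k},\ldots,b_{r_{u_k}}^{u_k}]$ tends to infinity, or the number of T-chain iterations applied to a fixed base singularity tends to infinity. In each scenario, I would write
\begin{equation*}
K_{W_k}^2 \;=\; K_{S_k}^2 \,+\, \Bigl(\tfrac{2(n_k-1)-q_k-q'_k}{n_k}\Bigr) \,-\, \pi_k^*K_{S_k}\cdot C_k \,+\, (\text{contributions controlled by Lemma \ref{delta-bound}}),
\end{equation*}
using the inductive structure of Definition \ref{generalized T-singularity} and the continued fraction recursions $p_{i+1}=a_{i+1}p_i+p_{i-1}$. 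Since the T-chain algorithm acts on $(n,q,q')$ in a well-understood rational way, the discrepancy term $\tfrac{2(n-1)-q-q'}{n}$ converges to an explicit rational number as the iteration count grows, and by matching the configuration with the appropriate case of Lemma \ref{delta-bound}, the remaining terms are shown to admit a limit as well. The limiting value is then the desired accumulation point.

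The main obstacle will be the second part: matching (*) to a geometric case of Lemma \ref{delta-bound} admitting a uniform limit, and verifying that the combinatorial recursions on $[b_1^u,\ldots,b_{r_u}^u]$ and on the T-chain iterations produce genuinely convergent (rather than merely bounded) sequences of discrepancies and of intersection numbers $\pi_k^*K_{S_k}\cdot C_k$. Once this bookkeeping is done, the bi-implication follows from combining Theorem \ref{deltas}, Corollary \ref{Cor2}, and the explicit asymptotics of Hirzebruch--Jung fractions under the T-chain algorithm.
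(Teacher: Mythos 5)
Your forward direction has a genuine gap. You propose that the failure of property (*) forces the configuration at $P_k$ to fall (cofinitely) into cases (A), (B.2) or (D.3) of Lemma \ref{delta-bound}, so that Corollary \ref{Cor2} applies. This is not true: nothing in the negation of (*) excludes case (B.1) (a maximal diagram of type $(i)$ with $l$ leading $2$'s, where $\delta=l$ can be arbitrarily large) or case (C.2) (type $(iv)$). For type $(i)$ diagrams the inequality \eqref{sumc} of Theorem \ref{deltas} is a priori circular, since $\delta=l$ itself grows with the number of $2$'s in the chain; the paper closes this loop with Lemma \ref{T generalizadas E.C=1}, which uses adjunction and $K_S$ nef to prove $2\delta\leq \sum_j(a_j-2)-2$ and hence an honest bound $\sum_j(a_j-2)<4c+6$. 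Your proposal contains no substitute for this step. Likewise, your fallback via Corollary \ref{Cor1} ("deduce that the number of $2$'s at the extremes is bounded") is exactly what needs to be proved, and the failure of (*) does not deliver it. The paper's actual route is Lemma \ref{previous 1.9}: it shows that \emph{unless} there is a $(-1)$-curve meeting both ends of the chain, the singularity data is bounded purely in terms of $c$, and that in the remaining case $P_k\in\mathcal{B}([b_1^u,\ldots,b_{r_u}^u])$ for some $u<2c+1$ together with the explicit formula for $K_{W_k}^2$; the set $J$ of property (*) is then extracted by successive pigeonhole refinements ($K_{S_k}^2<c+2$ hence constant on a subset, finitely many $u$'s hence one fixed $u$, and distinct reduced fractions since otherwise only finitely many $K^2$ values occur). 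None of this structure is present in your sketch.

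Your reverse direction is closer in spirit to the paper's, but note that condition (iii) of Definition \ref{property (*)} fixes $u$, so the alternative "$u_k\to\infty$" you allow does not arise; the growth happens entirely inside $\mathcal{B}([b_1^u,\ldots,b_{r_u}^u])$ via the T-chain algorithm. The paper extracts a nested chain of indices along which each continued fraction is obtained from the previous one by the T-chain algorithm, then uses Proposition \ref{formation rule} to show the terms $\bigl(2(n_k-1)-q_k-q_k'\bigr)/n_k$ are strictly decreasing, hence pairwise distinct; combined with the boundedness coming from $0<m_k'+1\leq\sum_j(b_j^u-2)$ and constant $K_{S_k}^2$, this gives an infinite bounded set of values and therefore an accumulation point. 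You should make explicit why the resulting $K^2$ values are infinitely many distinct numbers (not merely convergent or bounded); convergence of the discrepancy term alone does not suffice if the values were eventually constant.
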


It is shown in Proposition \ref{A3} that every accumulation point which is coming from a sequence as one described in Theorem \ref{A1}, can be constructed by blowing up a particular configuration of curves in a smooth surface and then contracting the new configuration obtained.

\subsubsection*{Acknowledgments} I am grateful to my advisor Giancarlo Urz\'ua for his guidance and support throughout this work. The results in this paper are part of my Ph.D. thesis at the Pontificia Universidad Cat\'olica de Chile. I would also like to thank S\"onke Rollenske for the hospitality during my stay at the Philipps-Universit\"at Marburg. Special thanks to Wenfei Liu, Julie Rana, and S\"onke Rollenske for many comments and suggestions. I was funded by the Agencia Nacional de Investigaci\'on y Desarrollo (ANID) through the beca DOCTORADO NACIONAL 2017/21171009.

\section{Preliminaries} 

In this section, we introduce the notation used throughout this article. The results listed below can be found in  \cite{fulton1993introduction}. We will only consider stable surfaces $W$ with one cyclic quotient singularity $P$. We denote its minimal resolution by $X$, and a minimal model of $X$ by $S$ (i.e. $S$ has no $(-1)$-curves).

A two dimensional cyclic quotient singularity is by definition the germ at the origin of the quotient of $\mathbb{C}^2$ by $\Z/n$. It is denoted by $\frac{1}{n}(1,q)$, where $\xi\cdot(x,y)\mapsto(\xi x,\xi^qy)$ is the action of $\Z/n$ on $\mathbb{C}^2$, $\xi$ is a primitive root of $1$, and gcd$(q,n)=1$. A cyclic quotient singularity can be constructed as a singularity of a toric surface. This construction can be used to obtain an explicit resolution of the singularity, which is entirely determined by the numbers $n$ and $q$ in the following way (see \cite[pp.31-50]{fulton1993introduction}).


\begin{proposition}\label{Resolution of P} Let $W$ be a surface with a singularity $\frac{1}{n}(1,q)$. Then, the minimal resolution $\phi\colon X \to W$ contains a chain $C$ of exceptional curves $C_1,\dots, C_r$ such that $C_j \simeq \mathbb{P}^1$, and

\begin{equation}
C_i\cdot C_j =\left\{ 
    \begin{array}{lcl}
     &1& \text{if } i=j\pm 1 \\
     & -b_j& \text{if } i=j\\
     &0& \text{otherwise}
    
    \end{array}
\right.
\end{equation}
where $[b_1,\dots , b_r]$ is the Hirzebruch-Jung continued fraction of $\frac{n}{q}$. We say that this singularity has length $r$.
\end{proposition}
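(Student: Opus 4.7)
The plan is to deduce this from the toric description of cyclic quotient singularities, following \cite{fulton1993introduction}. First I would realize the germ $\frac{1}{n}(1,q)$ as an affine toric surface $U_\sigma = \Spec \C[\sigma^\vee \cap M]$ for an appropriate rational simplicial cone $\sigma \subset N_\R = \R^2$, whose two extremal rays are generated by primitive lattice vectors $v_0$ and $v_{r+1}$ chosen so that $\sigma^\vee \cap M$ is exactly the algebra of invariants for the $\mu_n$-action $(x,y)\mapsto(\xi x,\xi^q y)$ on $\C^2$. Any resolution of $P$ then corresponds to a refinement of $\sigma$, and each smooth subdivision produces a resolution whose exceptional divisors are toric $\P^1$'s indexed by the newly inserted rays.

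Next I would extract the minimal smooth refinement by running the Hirzebruch--Jung algorithm. Inductively, for $1 \le j \le r$, let $v_j$ be the primitive lattice vector on the compact boundary of $\mathrm{conv}((\sigma\cap N)\setminus\{0\})$ immediately following $v_{j-1}$. A standard check shows that consecutive pairs $(v_{j-1}, v_j)$ form a $\Z$-basis of $N$, so the subdivision is smooth; moreover, there exist integers $b_j \ge 2$ with
\[
 v_{j-1} + v_{j+1} = b_j\, v_j, \qquad j = 1,\dots,r.
\]
Eliminating the intermediate $v_j$'s from these relations produces the Hirzebruch--Jung continued fraction $\tfrac{n}{q} = [b_1,\dots,b_r]$, matching the $b_j$'s of the statement with the coefficients of the continued fraction.

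Finally I would read off the intersection theory from the toric data. Each inserted ray $v_j$ corresponds to an irreducible toric divisor $C_j \cong \P^1$. Two toric divisors meet transversally in a single point if and only if their rays span a two-dimensional cone of the refined fan, which gives $C_i \cdot C_j = 1$ precisely when $|i-j|=1$ and $0$ otherwise. The linear relation $v_{j-1} + v_{j+1} = b_j\, v_j$ translates via the standard toric self-intersection formula into $C_j^2 = -b_j$. Minimality of this resolution is then automatic because every $b_j \ge 2$, so no exceptional component can be a $(-1)$-curve.

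The main obstacle in this plan is the inductive verification that the primitive boundary vectors $v_j$ satisfy $v_{j-1}+v_{j+1} = b_j\, v_j$ with $b_j \ge 2$, and that these coefficients coincide with the Hirzebruch--Jung expansion of $n/q$; once this is settled, everything else is a mechanical translation between fans and divisors. I would attack this step by induction on $r$, using the unimodular change of basis $(v_{j-1},v_j)\mapsto(v_j,v_{j+1})$ to replace $\sigma$ by a smaller cone of the same shape, which is exactly the Euclidean step underlying the continued fraction algorithm. Since the whole statement is classical and fully treated in \cite{fulton1993introduction}, a concise exposition can in practice just point to the relevant pages and emphasize the dictionary between the lattice data and the intersection numbers.
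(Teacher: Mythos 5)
Your proposal is correct and follows essentially the same route as the paper, which gives no independent argument but simply cites the toric construction in \cite[pp.~31--50]{fulton1993introduction}; the cone refinement via the Hirzebruch--Jung algorithm, the relation $v_{j-1}+v_{j+1}=b_jv_j$, and the translation into $C_j^2=-b_j$ and transversal intersections of adjacent components are precisely the content of that reference. No discrepancy to report.
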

Given the chain 
$C=C_1+ \cdots +C_r$, its dual graph is defined as in Figure \ref{The dual graph}, where the $i$-th vertex corresponds to the curve $C_i$, and the edge between the curves $C_j$ and $C_{j+1}$ corresponds to the point in the intersection between them.

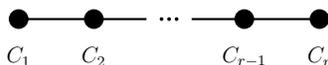
\begin{figure}[H]
\begin{center}
\begin{tikzpicture}[roundnode/.style={circle, draw=black, fill=white, thick,  scale=0.6},squarednode/.style={rectangle, draw=black, fill=white, thick, scale=0.7},roundnodefill/.style={circle, draw=black, fill=black, thick,  scale=0.6},letra/.style={rectangle, draw=white, fill=white, thick, scale=0.7}]

\draw[black, thick] (-2,0) -- (-0.3,0){};
\draw[black, thick] (0.3,0) -- (2,0){};

\node[roundnodefill] at (-2,0) {} ;
\node[letra] at (-2,-0.5) {$C_1$} ;
\node[roundnodefill] at (-1,0){};
\node[letra] at (-1,-0.5) {$C_2$} ;

\filldraw[black] (-0.1,0) circle (0.5pt) node[anchor=west] {};
\filldraw[black] (0,0)  circle (0.5pt) node[anchor=west] {};
\filldraw[black] (0.1,0) circle (0.5pt) node[anchor=west] {};

\node[roundnodefill] at (1,0){};
\node[letra] at (1,-0.5){$C_{r-1}$};
\node[roundnodefill] at (2,0){};
\node[letra] at (2,-0.5){$C_r$};

\end{tikzpicture}
\end{center}
\caption{The dual graph of $\frac{1}{n} (1,q)$.}
  \label{The dual graph}
\end{figure}

In this case, we have the following numerical equivalence 
\begin{equation}\label{eq2}
        K_X \equiv \phi^*K_W + \sum\limits_{j=1}^{r}a_jC_j
\end{equation} where the coefficients $a_j$ are rational numbers $a_j\in ]-1,0]$ called discrepancies. We will say that (\ref{eq2}) is the \textit{canonical class formula}.

\begin{remark}\label{triangular matrix} The vector of discrepancies is the solution of the following linear system  
\begin{center}
\begin{equation*}
A=
\left(\begin{array}{cccccc|c}

    -b_1&1&0&0&\cdots &0&b_1-2\\
    1&-b_2&1&0&\cdots &0&b_2-2\\
    0&1&\ddots&\ddots &\ddots&\vdots&\vdots\\
    \vdots&\ddots&\ddots&\ddots&1&0&b_{r-2}-2\\
    0&\cdots&0&1 &-b_{r-1}&1&b_{r-1}-2\\
    0&\cdots&0&0 &1&-b_r&b_r-2
\end{array}\right)
\end{equation*}
\end{center}

That linear system can be solved using the tridiagonal matrix algorithm because the matrix is diagonally dominant. Then, we obtain the discrepancies from the formulas $a_r=d_r$, and $a_j=d_j-c_ja_{j+1}$ for $j=2,\ldots r$, where $c_j,d_j$ are auxiliary coefficients defined as follows:
 \begin{itemize}
     \item $c_1=\dfrac{1}{-b_1}$, and $c_j=\dfrac{1}{-b_j-c_{j-1}}$ for $j=2,\ldots,r-1.$
     
     \item $d_1=\dfrac{b_1-2}{-b_1}$, and $d_j=\dfrac{1}{-b_j-c_{j-1}}$ for $j=2,\ldots,r.$
 \end{itemize}
\end{remark}


Following Proposition \ref{Resolution of P}, we denote by $[b_1,\dots,b_r]$ the continued fraction of $P$. Also, we denote by $q'$ the inverse of $q$ modulo $n$, that is, the unique integer $0<q'<n$ such that $qq'\equiv 1(mod\ n)$. 

\begin{proposition}\label{canonical divisor formula XW} Let $W$ be a normal projective surface with only one singularity and of type $\frac{1}{n}(1,q)$. Let $\phi\colon X \to W$ be the minimal resolution of $W$. Then we have $$K_X^2=K_W^2+\sum_{j=1}^{r}(2-b_j)+ \frac{2(n-1)-q-q'}{n}.$$
\end{proposition}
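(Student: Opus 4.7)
The plan is to combine the canonical class formula \eqref{eq2} with basic intersection theory on the exceptional chain, and then reduce the proposition to a purely combinatorial identity about the discrepancies $a_j$ and the continued fraction data $(n,q,q')$.

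First, I would square \eqref{eq2}. Writing $D=\sum_{j=1}^r a_j C_j$, we obtain
\begin{equation*}
K_X^2 = (\phi^*K_W)^2 + 2\,\phi^*K_W\cdot D + D^2.
\end{equation*}
The projection formula gives $(\phi^*K_W)^2=K_W^2$, and since every $C_j$ is exceptional for the birational morphism $\phi$, we have $\phi^*K_W\cdot C_j=0$, so the cross term vanishes. This reduces the claim to showing
\begin{equation*}
D^2 \;=\; \sum_{j=1}^r(2-b_j) + \frac{2(n-1)-q-q'}{n}.
\end{equation*}

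Next I would compute $D^2$ via adjunction. Rewriting \eqref{eq2} as $D=K_X-\phi^*K_W$ and using $\phi^*K_W\cdot C_j=0$ once more,
\begin{equation*}
D^2 \;=\; K_X\cdot D \;=\; \sum_{j=1}^r a_j\,(K_X\cdot C_j).
\end{equation*}
Because $C_j\simeq\mathbb{P}^1$ with $C_j^2=-b_j$, adjunction gives $K_X\cdot C_j = -2-C_j^2 = b_j-2$. Hence
\begin{equation*}
D^2 = \sum_{j=1}^r a_j(b_j-2),
\end{equation*}
and the proposition is equivalent to the identity
\begin{equation*}
\sum_{j=1}^r (a_j+1)(b_j-2) \;=\; \frac{2(n-1)-q-q'}{n}.
\end{equation*}

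The main obstacle is this last identity, which is the only place where the specific arithmetic of the Hirzebruch--Jung continued fraction $[b_1,\dots,b_r]=n/q$ enters. I would prove it by induction on the length $r$. The base case $r=1$ is $[n]$, where $q=q'=1$, $a_1=-(n-2)/n$, and both sides equal $-(n-2)^2/n$. For the inductive step, I would use the tridiagonal recursion of Remark \ref{triangular matrix} to express the discrepancies of $[b_1,\dots,b_r]$ in terms of those of the shorter chain $[b_2,\dots,b_r]$ (equivalently, the resolution of a related cyclic quotient singularity), together with the standard transformation rules for Hirzebruch--Jung continued fractions that relate $(n,q,q')$ for $[b_1,\dots,b_r]$ to the analogous triple $(n_1,q_1,q_1')$ for $[b_2,\dots,b_r]$; a careful but routine manipulation then matches the difference of the two sides against $\frac{2(n-1)-q-q'}{n} - \frac{2(n_1-1)-q_1-q_1'}{n_1}$. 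The bookkeeping of continued-fraction identities is the only delicate part of the argument; everything else is standard intersection theory.
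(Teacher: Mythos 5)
Your reduction is correct and follows the standard route (the paper itself offers no argument here, deferring to the proof of Proposition~3.4 in \cite{urzua2010arrangements}, which proceeds along essentially the same lines): squaring \eqref{eq2}, killing the cross term and $(\phi^*K_W)\cdot D$ by the projection formula, and computing $D^2=K_X\cdot D=\sum_j a_j(b_j-2)$ by adjunction correctly reduces the proposition to the arithmetic identity $\sum_{j=1}^{r}(a_j+1)(b_j-2)=\frac{2(n-1)-q-q'}{n}$, which is indeed true. Two caveats, though. First, your base case is misreported: for $[b_1]=[n]$ one has $(a_1+1)(b_1-2)=\frac{2}{n}(n-2)=\frac{2n-4}{n}$ and $\frac{2(n-1)-1-1}{n}=\frac{2n-4}{n}$, so both sides of \emph{your} identity equal $\frac{2n-4}{n}$; the value $-(n-2)^2/n$ you quote is $D^2=a_1^2C_1^2$, i.e.\ the common value of the two sides of the unreduced equation $K_X^2-K_W^2=\cdots$. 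This is a slip rather than an error of substance. Second, and more importantly, the inductive step is where all of the content of the proposition lives, and it is less routine than you suggest: deleting $b_1$ changes the entire tridiagonal system, hence every discrepancy $a_j$, and one must simultaneously track how $(n,q,q')$ transforms (here $n=b_1n_1-q_1$, $q=n_1$, and relating $q'$ to $q_1'$ already requires an argument). As written, the key identity is asserted, not proved.

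A cleaner way to close this gap is to use the closed form for the discrepancies: $a_j=-1+\frac{\alpha_j+\beta_j}{n}$, where $\alpha_j$ is the numerator of $[b_1,\dots,b_{j-1}]$ and $\beta_j$ the numerator of $[b_{j+1},\dots,b_r]$, so that $\alpha_0=0$, $\alpha_1=1$, $\alpha_{j+1}=b_j\alpha_j-\alpha_{j-1}$, $\alpha_r=q'$, $\alpha_{r+1}=n$, and symmetrically $\beta_1=q$. (That these numbers solve the linear system of Remark~\ref{triangular matrix} is immediate from the recursion.) Then $\sum_j\alpha_j(b_j-2)=\sum_j\bigl(\alpha_{j+1}-2\alpha_j+\alpha_{j-1}\bigr)$ telescopes to $(\alpha_{r+1}-\alpha_r)-(\alpha_1-\alpha_0)=n-q'-1$, and likewise $\sum_j\beta_j(b_j-2)=n-q-1$, giving $\sum_j(a_j+1)(b_j-2)=\frac{2n-2-q-q'}{n}$ as required. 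With that lemma in place your argument is complete.
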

\begin{proof} See e.g. the proof of Proposition $3.4$ in \cite{urzua2010arrangements}.
\end{proof}


Now, let $\pi \colon X\to S$ be a birational morphism to the minimal model $S$. Thus it is a composition of blow ups, each of which contracts a single $(-1)$-curve $F_i \subset X_i$ to a point $x_{i-1}\in X_{i-1}$. In this way we have the diagram: 

\begin{equation*}
X=X_m\stackrel{\pi_m}{\to}X_{m-1}\stackrel{\pi_{m-1}}{\to}\cdots \stackrel{\pi_2}{\to}X_1\stackrel{\pi_1}{\to}X_0=S
\end{equation*}

Let us define $E_m:=F_m$, and for each $i \in \{1,\ldots,m-1\}$ 
\begin{equation}\label{Diagram}
E_i:=(\pi_{i+1}\circ \pi_{i+2} \circ \cdots \circ \pi_{m})^*(F_i)\subset X.
\end{equation}

It follows from the definition that $E_i^2=-1$ and $E_i\cdot E_j=0$ whenever $i\neq j$. Furthermore, we have that each $E_i$ is not necessarily reduced, and its support is a tree of smooth rational curves. Assuming that $m>0$, each $E_i$ contains at least one $(-1)$-curve, and their irreducible components intersect transversally at most once. Of course we have

\begin{equation}\label{canonical divisor formula WS} 
    K_W^2-K_S^2=\sum_{j=1}^{r}(b_j-2)-m-\bigg(\frac{2(n-1)-q-q'}{n}\bigg).
\end{equation}

\begin{lemma}\label{E.C} We have $\big( \sum_{i=1}^{m}E_i\big)\cdot C= \sum_{j=1}^{r}(b_j-2)-\lambda$, where $\lambda=\pi^*K_S\cdot C$.
\end{lemma}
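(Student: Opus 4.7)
The plan is to use the standard blowup formula for the canonical class of $X$ and the adjunction formula on each exceptional curve $C_j$ of the resolution.

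First I would establish the relation
\begin{equation*}
K_X = \pi^{*}K_S + \sum_{i=1}^{m} E_i
\end{equation*}
on $X$. This is a routine induction on the number of blowups: at each step $\pi_i\colon X_i \to X_{i-1}$ one has $K_{X_i} = \pi_i^{*}K_{X_{i-1}} + F_i$, and pulling back via $\pi_{i+1} \circ \cdots \circ \pi_m$ turns $F_i$ into $E_i$ as defined in \eqref{Diagram}. Intersecting both sides of this identity with the chain $C = C_1 + \cdots + C_r$ and using the definition $\lambda = \pi^{*}K_S \cdot C$ immediately gives
\begin{equation*}
\Bigl(\sum_{i=1}^{m} E_i\Bigr) \cdot C \;=\; K_X \cdot C - \lambda.
\end{equation*}

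Next I would compute $K_X \cdot C$ directly. By Proposition \ref{Resolution of P}, each $C_j \simeq \mathbb{P}^{1}$ is smooth rational with $C_j^{2} = -b_j$, so adjunction on $X$ yields
\begin{equation*}
K_X \cdot C_j \;=\; 2g(C_j) - 2 - C_j^{2} \;=\; -2 + b_j \;=\; b_j - 2.
\end{equation*}
Summing over $j = 1,\ldots, r$ gives $K_X \cdot C = \sum_{j=1}^{r}(b_j - 2)$, which when combined with the previous display is precisely the claimed formula.

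I do not expect any real obstacle here: the lemma is an application of two standard identities (total-transform formula for $K$ under blowup, and adjunction for smooth rational curves) combined linearly. The only point requiring mild care is that one should intersect on $X$ rather than on any intermediate $X_i$, so that both $\pi^{*}K_S$ and the various $E_i$ live on the same surface as $C$; this is exactly why the $E_i$ were defined as total transforms in \eqref{Diagram}.
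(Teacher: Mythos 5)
Your proof is correct and follows essentially the same route as the paper, which simply cites the two identities $K_X\cdot C=\sum_{j=1}^{r}(b_j-2)$ and $\sum_{i=1}^{m}E_i=K_X-\pi^*K_S$ and combines them; you have merely filled in the standard derivations (adjunction on each $C_j$ and induction over the blowups) that the paper leaves implicit.
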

\begin{proof} It follows directly from $K_X\cdot C={\sum\limits_{j=1}^{r}(b_j-2)}$, and $\sum_{i=1}^{m}E_i=K_X-\pi^*K_S$.
\end{proof}

In order to describe the behavior of the accumulations points of volumes, we will find a suitable lower bound for the intersection between $C$ and $\sum_{i=1}^{m}E_i$. We first introduce a graph $\Gamma_{E_i}$ for each exceptional divisor, as it was done in \cite[pp.9]{rana2017boundary}. It is constructed by replacing the $j$-th vertex in the dual graph of $C$, by a box if $C_i \subset E_i$. For instance, if we have $C_1,C_5$ belonging to $E_i$, the $\Gamma_{E_i}$ is as in Figure \ref{Diagram 1}.

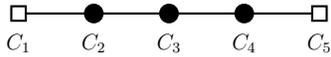
\begin{figure}[H]
\begin{center}
\begin{tikzpicture}[roundnode/.style={circle, draw=black, fill=white, thick,  scale=0.6},squarednode/.style={rectangle, draw=black, fill=white, thick, scale=0.7},roundnodefill/.style={circle, draw=black, fill=black, thick,  scale=0.6},letra/.style={rectangle, draw=white, fill=white, thick, scale=0.7}]

\draw[black, thick] (-2,0) -- (2,0);

\node[squarednode] at (-2,0){} ;
\node[letra] at (-2,-0.4) {$C_1$};

\node[roundnodefill] at (-1,0){} ;
\node[letra] at (-1,-0.4) {$C_2$} ;

\node[roundnodefill] at (0,0){};
\node[letra] at (0,-0.4){$C_3$};

\node[roundnodefill] at (1,0){};
\node[letra] at (1,-0.4){$C_4$} ;

\node[squarednode] at (2,0){};
\node[letra] at (2,-0.4) {$C_5$};
\end{tikzpicture}
\caption{Example of the graph of $E_i$.}
 \label{Diagram 1}
 \end{center}   
\end{figure}
 
As a way of example, it follows from Figure \ref{Diagram 1} that there are at least two points in the intersection of curves in $C$ not in $E_i$ and $E_i$, which correspond to the two extreme edges of the graph.

\begin{lemma}\label{EC>1} For any $i$, we have $E_i\cdot C\geq 1$. 
\end{lemma}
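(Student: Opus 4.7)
The plan is to combine the canonical class formula for the cyclic quotient singularity with the ampleness of $K_W$ coming from stability of $W$, and then translate the resulting numerical inequality into the claimed bound on $C \cdot E_i$.

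First I would observe that $E_i = (\pi_{i+1} \circ \cdots \circ \pi_m)^* F_i$ is a pullback of the $(-1)$-curve $F_i$ on $X_i$, so by the projection formula $K_X \cdot E_i = K_{X_i} \cdot F_i = -1$ and $E_i^2 = -1$. Next, from the canonical class formula \eqref{eq2} I would write $\phi^* K_W \equiv K_X + D$, where $D := \sum_{j=1}^{r} (-a_j)\, C_j$ is an effective $\mathbb{Q}$-divisor with coefficients in $[0,1)$. Intersecting with $E_i$ and applying the projection formula gives
$$K_W \cdot \phi_* E_i \;=\; \phi^* K_W \cdot E_i \;=\; -1 + D \cdot E_i.$$
Since $W$ is stable the divisor $K_W$ is ample, hence $\phi^* K_W$ is nef on $X$, so the left-hand side is $\geq 0$. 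This yields the fundamental inequality $D \cdot E_i \geq 1$.

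To conclude $C \cdot E_i \geq 1$, I would compare $D \cdot E_i$ to $C \cdot E_i$. Since $C - D = \sum_j (1 + a_j)\, C_j$ has every coefficient in $(0, 1]$, in the generic situation where no $C_j$ is a component of $E_i$ we have $C_j \cdot E_i \geq 0$ for all $j$, and the term-by-term comparison $(-a_j)(C_j \cdot E_i) \leq C_j \cdot E_i$ immediately gives $C \cdot E_i \geq D \cdot E_i \geq 1$.

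The main obstacle is the case in which some $C_j$ is a component of $E_i$, since then $C_j \cdot E_i$ may be negative and the term-by-term comparison fails. The first step to handle this is to rule out $E_i$ being entirely supported in $C$: if $E_i = \sum_k m_k C_k$, then using $D \cdot C_k = 2 - b_k$ one obtains $D \cdot E_i = \sum_k m_k(2 - b_k) \leq 0$ since $b_k \geq 2$, contradicting the inequality above. Decomposing $E_i = \sum_k m_k C_k + E_i'$ with $E_i'$ effective and having no component in $C$, the same argument applied to the external part gives $D \cdot E_i' \geq 1 + \sum_k m_k(b_k - 2)$. The delicate point is then to convert this into a bound on $C \cdot E_i'$ strong enough to compensate the negative contribution $\sum_k m_k(-b_k + \deg_k)$ coming from the components of $E_i$ that lie in $C$; the key inputs are the precise values of the discrepancies $-a_j$ (which can be extracted from the tridiagonal linear system of Remark \ref{triangular matrix}), the negative definiteness of the intersection form on $C$, and the connectedness of $E_i$ as a tree of smooth rational curves, which forces $E_i'$ to meet the components of $C$ contained in $E_i$ and hence pushes $C \cdot E_i'$ up enough to guarantee $E_i \cdot C \geq 1$.
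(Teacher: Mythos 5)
Your opening computation is correct and useful: from $E_i=(\pi_{i+1}\circ\cdots\circ\pi_m)^*F_i$ one gets $K_X\cdot E_i=-1$, and writing $\phi^*K_W=K_X+D$ with $D=\sum_j(-a_j)C_j$, $-a_j\in[0,1)$, the nefness of $\phi^*K_W$ gives $D\cdot E_i\geq 1$. Since $(-a_j)(C_j\cdot E_i)\leq C_j\cdot E_i$ whenever $C_j\cdot E_i\geq 0$, this settles the case where no $C_j$ is a component of $E_i$. But the case you defer to the last paragraph is not a corner case --- it is the situation the whole paper is about (the graphs $\Gamma_{E_i}$ record exactly which $C_j$ lie in $E_i$), and your plan for it does not close. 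If you carry out your own decomposition $E_i=\sum_k m_kC_k+E_i'$, the estimate $D\cdot E_i'\geq 1+\sum_k m_k(b_k-2)$ combined with $C\cdot C_k=-b_k+\deg_k$ yields only $C\cdot E_i\geq 1+\sum_k m_k(\deg_k-2)$, which equals $1$ when all the $C_k\subset E_i$ are interior to the chain but degenerates to $1-m_1-m_r$ when $C_1$ or $C_r$ is a component of $E_i$ --- precisely the configurations (diagrams of type (iii) and (iv)) that occur later in the paper. Your closing sentence lists ingredients (discrepancy values, negative definiteness, connectedness) without an argument, so this is a genuine gap rather than a routine verification.

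The missing idea is the structural one the paper exploits: because $E_i$ is the \emph{total transform} of the single $(-1)$-curve $F_i$, the projection formula gives $E_i\cdot C_j=F_i\cdot(\pi_{i+1}\circ\cdots\circ\pi_m)_*C_j$, so for a component $C_j\subset E_i$ one has $E_i\cdot C_j=0$ (if $C_j$ is contracted) or $E_i\cdot C_j=-1$ (if $C_j$ is the strict transform of $F_i$, which can happen for at most one $j$). Hence the components of $C$ inside $E_i$ contribute at least $-1$ in total, with no dependence on multiplicities or on $b_k$. The positive contribution is then supplied by the observation --- which is really your own nefness computation applied to a $(-1)$-curve $F\subset E_i$ instead of to $E_i$ --- that ampleness of $K_W$ forces $F\cdot C\geq 2$, and the tree structure of $E_i$ converts this into at least two intersection points of $E_i$ with components of $C$ not contained in $E_i$. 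I would encourage you to rework the hard case along these lines rather than via the discrepancies.
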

\begin{proof} First we observe that if $C_j\subset E_i$, then $E_i\cdot C_j=-1$ is only possible for one $j$. Otherwise, we have $E_i\cdot C_j=0$. Since, there is a $(-1)$-curve $F\subset E_i$, and because of ampleness of $K_W$ then we have that $F\cdot C\geq 2$. Hence, we have that $E_i$ intersects with $C\setminus E_i$ in at least $2$. Thus, we conclude that $E_i\cdot C\geq 1$.
\end{proof}

\begin{remark}\label{FC>2} As we saw in the proof, we remark that for any $(-1)$-curve $F$ in $X$ we must have $F\cdot C\geq 2$. (This is because $K_W$ is ample.) Similarly, any $(-2)$-curve in $X$ must intersect the chain $C$ positively. In addition, note that we have $\sum_{i=1}^{m}E_i\cdot C \geq m+1$.
\end{remark}

The following example shows a sequence of accumulation points of $\{K^2\}$ on stable surfaces with only one cyclic singularity. It is constructed in a similar way to the one shown in \cite{blache1995example}. 

\begin{example}\label{Example $[4,n_0,4]$} 

Let $S' \to \P^1$ be an elliptic fibration obtained by blowing up at the intersection points of two general cubic curves in $\mathbb{P}^2$. It has $12$ nodal rational fibers (type $I_1$ according to Kodaira's notation). Now, let $n_0>0$ and let $f\colon S\to S'$ be the $n_0$-th cyclic cover (see e.g \cite{urzua2010arrangements}) branched along $F_1+\cdots +F_{n_0}$, where $F_i$ are general fibers on $S'$.

We have that $K_{S}^2=0$. Note that for every $(-1)$-curve $\beta$ in $S'$ the self-intersection of $f^*(\beta)$ is $-n_0$. Let us choose two nodal singular fibers $F$ and $F'$ in $S'$. After blowing up at the points on the nodes of $F$, and $F'$, we obtain a smooth projective surface $X_0$, which has the configuration $[4,n_0,4]$ shown in Figure \ref{Ex1}.
\begin{figure}[H]
    \begin{center}
    \includegraphics[width=4.5cm]{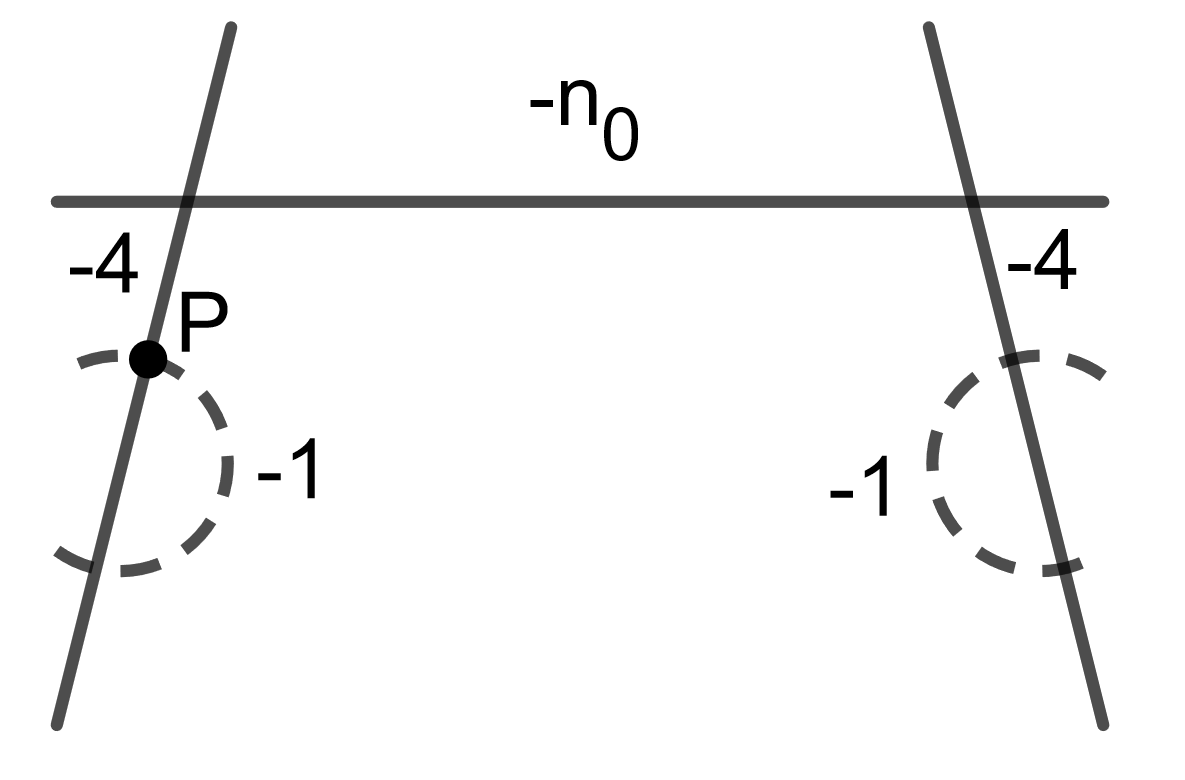}
    \caption{The configuration $[4,n_0,4]$.}
    \label{Ex1}
    \end{center}
    \end{figure}

Let us denote by $C_1,C_2$ and $C_3$ the curves in Figure \ref{Ex1} with self-intersection $-4,-n_0,$ and $-4$ respectively. We will construct a sequence of smooth projective surfaces $\{X_k\}$ by blowing up at points in $[4,n_0,4]$. First, let $X_1$ be the surface obtained by blowing up at the point $P$ in Figure \ref{Ex1}, and let $\Gamma$ the exceptional curve. The surface $X_2$ is obtained by blowing up at $C_1\cap \Gamma$, where $C_1$ is the strict transform. We continue blowing up at the point in the intersection between the strict transform of $C_1$ and the last exceptional curve obtained. After $k$ blow ups at points in the configuration, we obtain a smooth projective surface $X_k$, which has a chain of rational curves $C$. Each component $C_j$ of $C$ has a self-intersection in the sequence $\{-2,\dots,-2,-(4+k),-n_0,-4\}$, where $k$ is the number of $2$'s on the left side. By the construction, there is a $(-1)$-curve intersecting $C_1$, and $C_{k+1}$. By Artin's contractibility Theorem \cite[Thm. 2.3]{artin1962some}, we may contract $C$ to obtain a normal projective surface $W_k$ with only one cyclic quotient singularity. Note that there are $k+1$ divisors $E_j$ corresponding to the pull-back of the $(-1)$-curves of the blow downs. The graph of $\Gamma_{E_j}$ for $j=1,\dots, k$  is shown in Figure \ref{Ex3}, where $j$ is the number of $2$'s on the left side. 

\begin{figure}[H]
\begin{tikzpicture}[roundnode/.style={circle, draw=black, fill=white, thick,  scale=0.6},squarednode/.style={rectangle, draw=black, fill=white, thick, scale=0.7},roundnodefill/.style={circle, draw=black, fill=black, thick,  scale=0.6},roundnodewhite/.style={circle, draw=black, fill=white, thick,  scale=0.6},letra/.style={rectangle, draw=white, fill=white, thick, scale=0.7}]

\draw[black, thick] (-3,0) -- (-2,0){};
\draw[black, thick] (-2,0) -- (-1.4,0){};
\draw[black, thick] (-0.6,0) -- (0,0){};
\draw[black, thick] (0,0) -- (1,0){};
\draw[black, thick] (1,0) -- (2,0){};
\draw[black, thick] (2,0) -- (3,0){};

\node[squarednode] at (-3,0){} ;
\node[letra] at (-3.1,-0.5){$-2$} ;
\node[squarednode] at (-2,0){} ;
\node[letra] at (-2.1,-0.5){$-2$} ;
\node[squarednode] at (0,0){} ;
\node[letra] at (-0.1,-0.5){$-2$} ;

\node[roundnodefill] at (1,0){};
\node[letra] at (0.9,-0.5){$-(4+j)$} ;
\node[roundnodefill] at (2,0){};
\node[letra] at (1.9,-0.5){$-n_0$} ;
\node[roundnodefill] at (3,0){};
\node[letra] at (2.9,-0.5){$-4$} ;
\filldraw[black] (-1.2,0) circle (0.5pt) node[anchor=west] {};
\filldraw[black] (-1,0) circle (0.5pt) node[anchor=west] {};
\filldraw[black] (-0.8,0) circle (0.5pt) node[anchor=west] {};
\draw[black,  thick] (-3,0.1).. controls (-2,1) and (0,1).. (1,0.1);
\node[roundnodewhite] at (-0.9,0.76){};
\node[letra] at (-0.9,0.4) {$-1$};
\end{tikzpicture}
\caption{The graph of $\Gamma_{E_j}$.}
\label{Ex3}
\end{figure}
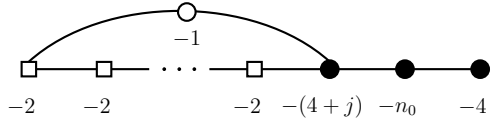

By pulling back the canonical divisor $K_{W_k}$, we can directly  write it as an effective sum of divisors, and then by the Nakai-Moishezon criterion, we obtain that $K_{W_k}$ is ample. By the canonical formula and induction over $k$, we obtain that 
$$\lim_{k\to\infty} K^2_{W_k}=\dfrac{4n_0^2-8n_0+2}{4n_0-1},$$ which is a sequence of accumulation points tending to $\infty$. 
\end{example}

\section{Bounding the case with one cyclic quotient singularity} 

In this section, we consider a normal stable surface $W$ with only one cyclic quotient singularity $P$, following the notation used previously. The goal is to show optimal bounds for the continued fraction associated to $P$. To start, we can easily see that if every exceptional divisor $E_i$ satisfies $E_i \cdot C \geq 2$, then we obtain the following bounds.

\begin{proposition}\label{delta=0}
Assume that $E_i \cdot C \geq 2$ for all $i$. Then 
\begin{equation}\label{E.C>2 sum}
    \sum_{j=1}^{r}\big( b_j-2\big)\leq 2(K_W^2-K_S^2)+2\bigg(\frac{2(n-1)-q-q'}{n}\bigg)-\pi^*K_S\cdot C ,
\end{equation}
and 
\begin{equation}\label{E.C>2 r}
r\leq 13K_W^2-2K_S^2+38 -\bigg(\frac{2+q+q'}{n}\bigg)-\pi^*K_S\cdot C.
\end{equation}

\end{proposition}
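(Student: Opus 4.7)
For the first inequality \eqref{E.C>2 sum}, my plan is a direct calculation. Summing the hypothesis $E_i\cdot C \geq 2$ over all $m$ exceptional divisors gives $\sum_i E_i \cdot C \geq 2m$. Rewriting the left hand side via Lemma \ref{E.C} as $\sum_j(b_j-2)-\lambda$ and substituting the expression $m = \sum_j(b_j-2) - (K_W^2 - K_S^2) - \eta$ coming from \eqref{canonical divisor formula WS} (where $\eta = \frac{2(n-1)-q-q'}{n}$), then isolating $\sum_j(b_j-2)$, yields \eqref{E.C>2 sum} with no further input. This part is purely algebraic and uses nothing beyond what has been established in the preliminaries, which matches the paper's comment that \eqref{sumc} remains valid in any characteristic.

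For the second inequality \eqref{E.C>2 r}, my plan is to invoke the Bogomolov-Miyaoka-Yau inequality, as announced just after Theorem \ref{deltas}. Since $W$ has only klt singularities (cyclic quotients are klt) and $K_W$ is ample, the orbifold BMY applies: $K_W^2 \leq 3\, e_{\mathrm{orb}}(W)$, where I compute $e_{\mathrm{orb}}(W) = c_2(X) - (r+1) + \frac{1}{n}$ by splitting $e(X)$ as $e(W\setminus\{P\}) + e(C)$ using the minimal resolution and adding the orbifold contribution $\frac{1}{n}$ at the singular point. Rearranging yields $r \leq c_2(X) - 1 + \frac{1}{n} - \frac{K_W^2}{3}$. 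I will then replace $c_2(X)$ via Noether's formula $c_2(X)=12\chi(\mathcal{O}_S)-K_S^2+m$ (using $\chi(\mathcal{O}_X)=\chi(\mathcal{O}_S)$ since cyclic quotients are rational and $\pi$ is birational), control $\chi(\mathcal{O}_S)$ by Noether's inequality $\chi(\mathcal{O}_S)\leq K_S^2/2+3$ for the minimal surface of general type $S$ (valid since $K_S$ is nef and $K_X$ is big), and control $m$ via the consequence $m\leq K_W^2 - K_S^2 + \eta - \lambda$ of the first step. Assembling these estimates produces an upper bound for $r$ of the shape of \eqref{E.C>2 r}.

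The main obstacle is matching the precise coefficients $13$, $-2$, and $38$: the naive combination above gives slightly different coefficients (e.g.\ $\tfrac{2}{3} K_W^2 + 4 K_S^2 + \cdots$), so producing the exact form in \eqref{E.C>2 r} requires either absorbing lower order terms using the inequality $K_W^2 \geq K_S^2 + m - \eta + \lambda$ (itself a rearrangement of the first step) and $K_W^2 > 0$, or replacing the standard Noether bound by a variant that yields a bound on $\chi(\mathcal{O}_S)$ of the form $\leq K_W^2 + \text{const}$. A small additional subtlety is carrying out the rational arithmetic using $\eta = 2 - \frac{2+q+q'}{n}$ and the leftover $-1+\frac{1}{n}$ term from BMY so that the final constant $-\frac{2+q+q'}{n}$ (rather than $\eta$) appears in the statement.
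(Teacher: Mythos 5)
Your derivation of \eqref{E.C>2 sum} is correct and is exactly the paper's argument: the paper obtains the proposition as the $\delta=0$ case of Theorem \ref{deltas}, whose first inequality comes from $\big(\sum_i E_i\big)\cdot C\ge 2m$, Lemma \ref{E.C} and \eqref{canonical divisor formula WS}, precisely as you describe.

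The gap is in \eqref{E.C>2 r}, and it sits exactly where you flagged uncertainty: the Noether-type input. You propose to control $\chi(\mathcal{O}_S)$ by the classical Noether inequality $\chi(\mathcal{O}_S)\le \tfrac12 K_S^2+3$ ``for the minimal surface of general type $S$'', justified by ``$K_S$ nef and $K_X$ big''. Neither is available here: the proposition does not assume $K_S$ nef, ampleness of $K_W$ does not make $K_X$ big, and $S$ genuinely need not be of general type. In Example \ref{Example $[4,n_0,4]$} the minimal model $S$ is a properly elliptic surface with $K_S^2=0$ and $\chi(\mathcal{O}_S)=n_0$ arbitrarily large, so the inequality $\chi(\mathcal{O}_S)\le \tfrac12 K_S^2+3$ simply fails for the surfaces this proposition is meant to cover. (It also enters with the wrong sign, producing the $+4K_S^2$ you noticed; but that is a symptom rather than the disease.) The missing ingredient is the ``variant'' you gesture at in your last sentence: the generalized Noether inequality $\chi(\mathcal{O}_W)\le K_W^2+3$ of Tsunoda--Zhang, recorded in Remark \ref{no dependencia de la caracteristica algebraica}, which applies to the stable surface $W$ itself. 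Since $P$ is a rational singularity and $\pi$ is birational, $\chi(\mathcal{O}_S)=\chi(\mathcal{O}_X)=\chi(\mathcal{O}_W)\le K_W^2+3$. Substituting this into your chain $r\le 12\chi(\mathcal{O}_S)-K_S^2+m-1+\tfrac1n-\tfrac13K_W^2$, together with $m\le K_W^2-K_S^2+\eta-\lambda$ (where $\eta=\tfrac{2(n-1)-q-q'}{n}$ and $\lambda=\pi^*K_S\cdot C$), gives $r\le \tfrac{38}{3}K_W^2-2K_S^2+37-\tfrac{1+q+q'}{n}-\lambda$, which implies \eqref{E.C>2 r} because $K_W^2>0$ and $1-\tfrac1n\ge 0$; the coefficient matching you worried about then takes care of itself. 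With that single replacement your route coincides with the paper's proof of Theorem \ref{deltas}, which uses the same three inputs (log-BMY, Noether's formula for $X$, and Tsunoda--Zhang) and merely eliminates $e(X)$ in a slightly different order.
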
 
\begin{proof} This corresponds to have $\delta=0$ in Theorem \ref{deltas}.
\end{proof}
In this way, if $K_S$ is nef, then we can bound singularities for all such $W$ with bounded $K_W^2$.



\begin{remark} In the particular case when $m=0$, we have that

\begin{equation*}
    \sum_{j=1}^{r}\big( b_j-2\big)= (K_W^2-K_S^2)+2-\bigg(\frac{2+q+q'}{n}\bigg),
\end{equation*}
and then, by using \eqref{temp5} we have
\begin{equation*}
r\leq 12K_W^2-K_S^2+36.
\end{equation*}
So, if $K_W\leq c$ for some positive number $c$, and $K_S$ is nef, we obtain finitely many options for $b_1,\dots,b_r$, and $r$. Thus, in what follows we will assume that $m>0$.
\end{remark}

Therefore, the critical case is when there exist an exceptional divisors $E_i$ such that $$E_i\cdot C=1.$$ 

\begin{remark}\label{tres casos} Using the same strategies as in the proof of Lemma \ref{EC>1}, we can see that if there are at least three points in the intersection of curves in $C\setminus E_i$ and curves in $E_i$ then we have that $E_i\cdot \big( \sum_{j=1}^{r}C_j\big)\geq 2$. Thus, if we have $E_i\cdot \big( \sum_{j=1}^{r}C_j\big)=1$ then there are two or fewer points on this intersection, and $\Gamma_{E_i}$ must be one of the following (see \cite[pp. 6]{rana2019optimal}): 


\begin{figure}[h!]
\begin{center}
\begin{tikzpicture}[roundnode/.style={circle, draw=black, fill=white, thick,  scale=0.6},squarednode/.style={rectangle, draw=black, fill=white, thick, scale=0.7},roundnodefill/.style={circle, draw=black, fill=black, thick,  scale=0.6}]

\draw[black, thick] (-4,0) -- (-3.4,0){};
\draw[black, thick] (-2.6,0) -- (-0.4,0){};
\draw[black, thick] (0.4,0) -- (2.6,0){};
\draw[black, thick] (3.4,0) -- (4,0){};

\node[roundnodefill] at (-4,0){} ;
\node[roundnodefill] at (-2,0){} ;
\node[squarednode] at (-1,0){};
\node[squarednode] at (1,0){};
\node[roundnodefill] at (2,0){} ;
\node[roundnodefill] at (4,0){} ;

\filldraw[black] (-3.2,0) circle (0.5pt) node[anchor=west] {};
\filldraw[black] (-3,0) circle (0.5pt) node[anchor=west] {};
\filldraw[black] (-2.8,0) circle (0.5pt) node[anchor=west] {};

\filldraw[black] (-0.2,0) circle (0.5pt) node[anchor=west] {};
\filldraw[black] (0,0) circle (0.5pt) node[anchor=west] {};
\filldraw[black] (0.2,0) circle (0.5pt) node[anchor=west] {};

\filldraw[black] (2.8,0) circle (0.5pt) node[anchor=west] {};
\filldraw[black] (3,0) circle (0.5pt) node[anchor=west] {};
\filldraw[black] (3.2,0) circle (0.5pt) node[anchor=west] {};

\end{tikzpicture}
\caption{Case A.}
  \label{Case 1.}
\end{center}
\end{figure}

\begin{figure}[H]
\begin{center}
\begin{tikzpicture}[roundnode/.style={circle, draw=black, fill=white, thick,  scale=0.6},squarednode/.style={rectangle, draw=black, fill=white, thick, scale=0.7},roundnodefill/.style={circle, draw=black, fill=black, thick,  scale=0.6}]

\draw[black, thick] (-4,0) -- (-3.4,0){};
\draw[black, thick] (-2.6,0) -- (-0.4,0){};
\draw[black, thick] (0.4,0) -- (1,0){};

\node[squarednode] at (-4,0){} ;
\node[squarednode] at (-2,0){} ;
\node[roundnodefill] at (-1,0){};
\node[roundnodefill] at (1,0){};

\filldraw[black] (-3.2,0) circle (0.5pt) node[anchor=west] {};
\filldraw[black] (-3,0) circle (0.5pt) node[anchor=west] {};
\filldraw[black] (-2.8,0) circle (0.5pt) node[anchor=west] {};

\filldraw[black] (-0.2,0) circle (0.5pt) node[anchor=west] {};
\filldraw[black] (0,0) circle (0.5pt) node[anchor=west] {};
\filldraw[black] (0.2,0) circle (0.5pt) node[anchor=west] {};
\end{tikzpicture}
\caption{Case B.}
  \label{Case 2.}
\end{center}
\end{figure}
\end{remark}

\begin{figure}[H]
\begin{center}
\begin{tikzpicture}[roundnode/.style={circle, draw=black, fill=white, thick,  scale=0.6},squarednode/.style={rectangle, draw=black, fill=white, thick, scale=0.7},roundnodefill/.style={circle, draw=black, fill=black, thick,  scale=0.6}]

\draw[black, thick] (-4,0) -- (-3.4,0){};
\draw[black, thick] (-2.6,0) -- (-0.4,0){};
\draw[black, thick] (0.4,0) -- (2.6,0){};
\draw[black, thick] (3.4,0) -- (4,0){};

\node[squarednode] at (-4,0){} ;
\node[squarednode] at (-2,0){} ;
\node[roundnodefill] at (-1,0){};
\node[roundnodefill] at (1,0){};
\node[squarednode] at (2,0){} ;
\node[squarednode] at (4,0){} ;

\filldraw[black] (-3.2,0) circle (0.5pt) node[anchor=west] {};
\filldraw[black] (-3,0) circle (0.5pt) node[anchor=west] {};
\filldraw[black] (-2.8,0) circle (0.5pt) node[anchor=west] {};

\filldraw[black] (-0.2,0) circle (0.5pt) node[anchor=west] {};
\filldraw[black] (0,0) circle (0.5pt) node[anchor=west] {};
\filldraw[black] (0.2,0) circle (0.5pt) node[anchor=west] {};

\filldraw[black] (2.8,0) circle (0.5pt) node[anchor=west] {};
\filldraw[black] (3,0) circle (0.5pt) node[anchor=west] {};
\filldraw[black] (3.2,0) circle (0.5pt) node[anchor=west] {};

\end{tikzpicture}
\caption{Case C.}
  \label{Case 3.}
\end{center}
\end{figure}
In order to describe in more detail the behavior of $\Gamma_{E_i}$, where $E_i$ is an exceptional divisor such that $E_i\cdot C=1$, we will introduce the following definition.
\begin{definition}\label{Long diagram} Let $k,l$ be positive integers. We say that $E_i$ has a \textit{long diagram} if $\Gamma_{E_i}$ is a diagram of type $(i)$, $(ii)$, $(iii)$, $(iv)$, and there is a $(-1)$-curve $F$ as shown in the following figures.


\begin{figure}[H]
\begin{center}
\begin{tikzpicture}[roundnode/.style={circle, draw=black, fill=white, thick,  scale=0.6},squarednode/.style={rectangle, draw=black, fill=white, thick, scale=0.7},roundnodefill/.style={circle, draw=black, fill=black, thick,  scale=0.6},roundnodewhite/.style={circle, draw=black, fill=white, thick,  scale=0.6},letra/.style={rectangle, draw=white, fill=white, thick, scale=0.7}]

\draw[black, thick] (-3,0) -- (-2.4,0){};
\draw[black, thick] (-1.6,0) -- (0.6,0){};
\draw[black, thick] (1.4,0) -- (2.6,0){};
\draw[black, thick] (3.4,0) -- (4,0){};

\draw[black,  thick] (-3,0.1).. controls (-2,1) and (1,1).. (2,0.1);
\node[roundnodewhite] at (-0.5,0.76){};
\node[letra] at (-0.5,0.4) {-1};


\node[squarednode] at (-3,0){} ;
\node[letra] at (-3.1,-0.5){$-2$} ;

\filldraw[black] (-1.8,0) circle (0.5pt) node[anchor=west] {};
\filldraw[black] (-2,0) circle (0.5pt) node[anchor=west] {};
\filldraw[black] (-2.2,0) circle (0.5pt) node[anchor=west] {};

\node[squarednode] at (-1,0){} ;
\node[letra] at (-1.1,-0.5){$-2$} ;

\node[roundnodefill] at (0,0){};
\node[letra] at (-0,-0.5){$C_{l+1}$} ;

\filldraw[black] (0.8,0) circle (0.5pt) node[anchor=west] {};
\filldraw[black] (1,0) circle (0.5pt) node[anchor=west] {};
\filldraw[black] (1.2,0) circle (0.5pt) node[anchor=west] {};

\node[roundnodefill] at (2,0){};
\node[letra] at (2,-0.5){$-b$} ;

\filldraw[black] (2.8,0) circle (0.5pt) node[anchor=west] {};
\filldraw[black] (3,0) circle (0.5pt) node[anchor=west] {};
\filldraw[black] (3.2,0) circle (0.5pt) node[anchor=west] {};

\node[roundnodefill] at (4,0){};
\node[letra] at (4.9,-0.5){} ;

\end{tikzpicture}
\caption{Diagram of type (i).}
  \label{Diagram (i)}
\end{center}
\end{figure}
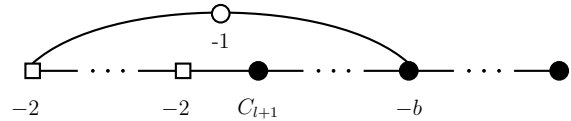

\begin{figure}[H]
\begin{center}
\begin{tikzpicture}[roundnode/.style={circle, draw=black, fill=white, thick,  scale=0.6},squarednode/.style={rectangle, draw=black, fill=white, thick, scale=0.7},roundnodefill/.style={circle, draw=black, fill=black, thick,  scale=0.6},roundnodewhite/.style={circle, draw=black, fill=white, thick,  scale=0.6},letra/.style={rectangle, draw=white, fill=white, thick, scale=0.7}]

\draw[black, thick] (-3,0) -- (-2.4,0){};
\draw[black, thick] (-1.6,0) -- (0.6,0){};
\draw[black, thick] (1.4,0) -- (2.6,0){};
\draw[black, thick] (3.4,0) -- (4,0){};

\draw[black,  thick] (-1,0.1).. controls (0,1) and (1,1).. (2,0.1);
\node[roundnodewhite] at (0.5,0.76){};
\node[letra] at (0.5,0.4) {-1};


\node[squarednode] at (-3,0){} ;
\node[letra] at (-3.1,-0.5){$-2$} ;

\filldraw[black] (-1.8,0) circle (0.5pt) node[anchor=west] {};
\filldraw[black] (-2,0) circle (0.5pt) node[anchor=west] {};
\filldraw[black] (-2.2,0) circle (0.5pt) node[anchor=west] {};

\node[squarednode] at (-1,0){} ;
\node[letra] at (-1.1,-0.5){$-2$} ;

\node[roundnodefill] at (0,0){};
\node[letra] at (-0.0,-0.5){$C_{l+1}$} ;

\filldraw[black] (0.8,0) circle (0.5pt) node[anchor=west] {};
\filldraw[black] (1,0) circle (0.5pt) node[anchor=west] {};
\filldraw[black] (1.2,0) circle (0.5pt) node[anchor=west] {};

\node[roundnodefill] at (2,0){};
\node[letra] at (2,-0.5){$-b$} ;

\filldraw[black] (2.8,0) circle (0.5pt) node[anchor=west] {};
\filldraw[black] (3,0) circle (0.5pt) node[anchor=west] {};
\filldraw[black] (3.2,0) circle (0.5pt) node[anchor=west] {};

\node[roundnodefill] at (4,0){};
\node[letra] at (4.9,-0.5){} ;

\end{tikzpicture}
\caption{Diagram of type (ii).}
  \label{Diagram (ii)}
\end{center}
\end{figure}


\begin{figure}[H]
\begin{center}

\begin{tikzpicture}[roundnode/.style={circle, draw=black, fill=white, thick,  scale=0.6},squarednode/.style={rectangle, draw=black, fill=white, thick, scale=0.7},roundnodefill/.style={circle, draw=black, fill=black, thick,  scale=0.6},roundnodewhite/.style={circle, draw=black, fill=white, thick,  scale=0.6},letra/.style={rectangle, draw=white, fill=white, thick, scale=0.7}]

\draw[black, thick] (-5,0) -- (-4.4,0){};
\draw[black, thick] (-3.6,0) -- (-0.4,0){};
\draw[black, thick] (0.4,0) -- (2.6,0){};
\draw[black, thick] (3.4,0) -- (4,0){};

\node[squarednode] at (-5,0){} ;
\node[letra] at (-5,-0.5){$-2$} ;

\filldraw[black] (-3.8,0) circle (0.5pt) node[anchor=west] {};
\filldraw[black] (-4,0) circle (0.5pt) node[anchor=west] {};
\filldraw[black] (-4.2,0) circle (0.5pt) node[anchor=west] {};

\node[squarednode] at (-3,0){} ;
\node[letra] at (-3,-0.5){$-2$} ;

\node[squarednode] at (-2,0){} ;
\node[letra] at (-2,-0.5){$-(k+2)$} ;

\node[roundnodefill] at (-1,0){};
\node[letra] at (-1.0,-0.5){$C_{l+1}$} ;

\filldraw[black] (-0.2,0) circle (0.5pt) node[anchor=west] {};
\filldraw[black] (0,0) circle (0.5pt) node[anchor=west] {};
\filldraw[black] (0.2,0) circle (0.5pt) node[anchor=west] {};

\node[roundnodefill] at (1,0){};
\node[letra] at (1,-0.5){$C_{r-k}$} ;

\node[squarednode] at (2,0){} ;
\node[letra] at (2,-0.5){$-2$} ;


\filldraw[black] (2.8,0) circle (0.5pt) node[anchor=west] {};
\filldraw[black] (3,0) circle (0.5pt) node[anchor=west] {};
\filldraw[black] (3.2,0) circle (0.5pt) node[anchor=west] {};

\node[squarednode] at (4,0){} ;
\node[letra] at (3.9,-0.5){$-2$} ;

\draw[black,  thick] (-2,0.1).. controls (-1,1) and (3,1).. (4,0.1);
\node[roundnodewhite] at (1,0.76){};
\node[letra] at (1,0.4) {-1};

\end{tikzpicture}
\caption{Diagram of type (iii).}
  \label{Diagram (iii)}
\end{center}
\end{figure}

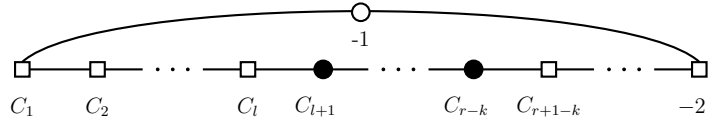
\begin{figure}[H]
\begin{center}
\begin{tikzpicture}[roundnode/.style={circle, draw=black, fill=white, thick,  scale=0.6},squarednode/.style={rectangle, draw=black, fill=white, thick, scale=0.7},roundnodefill/.style={circle, draw=black, fill=black, thick,  scale=0.6},roundnodewhite/.style={circle, draw=black, fill=white, thick,  scale=0.6},letra/.style={rectangle, draw=white, fill=white, thick, scale=0.7}]

\draw[black, thick] (-5,0) -- (-3.4,0){};
\draw[black, thick] (-2.6,0) -- (-0.4,0){};
\draw[black, thick] (0.4,0) -- (2.6,0){};
\draw[black, thick] (3.4,0) -- (4,0){};

\node[squarednode] at (-5,0){} ;
\node[letra] at (-5,-0.5){$C_1$} ;

\node[squarednode] at (-4,0){} ;
\node[letra] at (-4,-0.5){$C_2$} ;

\filldraw[black] (-2.8,0) circle (0.5pt) node[anchor=west] {};
\filldraw[black] (-3,0) circle (0.5pt) node[anchor=west] {};
\filldraw[black] (-3.2,0) circle (0.5pt) node[anchor=west] {};

\node[squarednode] at (-2,0){} ;
\node[letra] at (-2,-0.5){$C_l$} ;

\node[roundnodefill] at (-1,0){};
\node[letra] at (-1.1,-0.5){$C_{l+1}$} ;

\filldraw[black] (-0.2,0) circle (0.5pt) node[anchor=west] {};
\filldraw[black] (0,0) circle (0.5pt) node[anchor=west] {};
\filldraw[black] (0.2,0) circle (0.5pt) node[anchor=west] {};

\node[roundnodefill] at (1,0){};
\node[letra] at (0.9,-0.5){$C_{r-k}$} ;

\node[squarednode] at (2,0){} ;
\node[letra] at (2,-0.5){$C_{r+1-k}$} ;


\filldraw[black] (2.8,0) circle (0.5pt) node[anchor=west] {};
\filldraw[black] (3,0) circle (0.5pt) node[anchor=west] {};
\filldraw[black] (3.2,0) circle (0.5pt) node[anchor=west] {};

\node[squarednode] at (4,0){} ;
\node[letra] at (3.9,-0.5){$-2$} ;

\draw[black,  thick] (-5,0.1).. controls (-4,1) and (3,1).. (4,0.1);
\node[roundnodewhite] at (-0.5,0.76){};
\node[letra] at (-0.5,0.4) {-1};

\end{tikzpicture}
\caption{Diagram of type (iv).}
  \label{Diagram (iv)}
\end{center}
\end{figure}
\end{definition}

Now, we order the set of exceptional divisors according to their graph. Indeed, we say that $\Gamma_{E_i}$ is a subtree of $\Gamma_{E_j}$ if every $\square$-vertex of $\Gamma_{E_i}$ is a $\square$-vertex of $\Gamma_{E_j}$. Note that the set of the graphs $\Gamma_{E_i}$ is a partially ordered set with the following order: 
\begin{equation*}
\Gamma_{E_i}\leq \Gamma_{E_j}\Longleftrightarrow \Gamma_{E_i}  \textrm{ is a subtree of } \Gamma_{E_j}.
\end{equation*}

\begin{definition}\label{maximal graph} We say that the graph $\Gamma_{E_i}$ is called \textit{maximal} if it is a maximal element with respect $\leq$, and $E_i\cdot C=1$.
\end{definition}


By adding the discarded cases of Lemma $2.7$ in \cite{rana2019optimal}, which is valid in the context of T-singularities, we obtain the following result in the general case of cyclic quotient singularities. 


\begin{lemma}\label{E.C=1} Suppose that $E_i\cdot C=1$ for some $i$. Then $E_i$ has a long diagram. Moreover, if $E_i$ has a diagram of type $(iv)$, and $\Gamma_{E_i}$ is maximal, then there exists a sequence $\{m_1,\dots,m_s\}$ of natural numbers such that $C$ has continued fraction

\begin{equation*}
    [\dots,2,2+m_3,\underbrace{2,\dots,2}_{m_2-1},2+m_1,a_1,\dots, a_t,\underbrace{2,\dots, 2}_{m_1-1},2+m_2,2,\dots],
\end{equation*}
where $-a_1,\dots,-a_t$ correspond to the self-intersection of the $\bullet$ curves in $\Gamma_{E_i}$, and one of the ends is $2$ and the other one is $2+m_s$.

\end{lemma}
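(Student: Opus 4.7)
The plan is to treat the two assertions separately. For the first, I would start from Remark~\ref{tres casos}: the condition $E_i \cdot C = 1$ restricts $\Gamma_{E_i}$ to the three shapes A, B, C, since each $C_j \subset E_i$ contributes $0$ or $-1$ to $E_i \cdot C$, so the number of $\square$-$\bullet$ adjacencies in the dual graph of $C$ is at most $2$. I would then invoke that $E_i$ contains a $(-1)$-curve $F$ (Remark~\ref{FC>2}) satisfying $F \cdot C \geq 2$ by ampleness of $K_W$, and decompose $F \cdot C$ into contributions from $\square$-vertices (which $F$ can meet only if the corresponding $C_j$ is in $E_i$) and $\bullet$-vertices (which add to $E_i \cdot C$). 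Together with $E_i \cdot C = 1$, this constraint forces at least one intersection point of $F$ with a $\square$-vertex.

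Next I would analyze, case by case, the effect of contracting $F$ and the induced sequence of contractions of $(-1)$-curves inside $E_i$. For a $\square$-curve $C_j \subset E_i$ adjacent to $F$, after contracting $F$ its new self-intersection becomes $-b_j + 2$, so $b_j = 2$ unless this is the last contraction that brings $E_i$ to a single $(-1)$-curve. Propagating this reasoning along the chain, all $\square$-curves at the extremes of $\Gamma_{E_i}$ must have self-intersection $-2$, except possibly for one $\square$ of self-intersection $-(k+2)$ as in configuration (iii). Matching the resulting pictures to the four templates in Definition~\ref{Long diagram} yields the first assertion.

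For the second assertion, assume $\Gamma_{E_i}$ is maximal of type (iv), and reconstruct $E_i$ by successively contracting $(-1)$-curves starting from $F$. After contracting $F$, two neighbouring $\square$-curves (which were $(-2)$-curves) become $(-1)$-curves; the longest adjacent $(-2)$-chain is then absorbed by a further contraction, and this process alternates between the two sides. Let $m_j$ be the length of the $j$-th such $(-2)$-chain that gets absorbed. Maximality of $\Gamma_{E_i}$ prevents further extension of the $\square$-region, which forces the $\bullet$-curve adjacent to the $j$-th absorbed chain to have self-intersection $-(2+m_j)$ rather than $-2$. Tracking the sides on which the absorptions occur produces the stated symmetric continued fraction, with central block $a_1, \dots, a_t$ fixed (these being the $\bullet$-curves under $F$, untouched by the contractions) and one end equal to $2$, the other equal to $2 + m_s$, depending on the parity of the final contraction.

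The main obstacle lies in the combinatorial bookkeeping of the second assertion, in particular the alternation of sides that gives rise to the alternating positions of the $2+m_j$ terms. This requires a careful induction on the number of contractions, verifying that the absorbed $(-2)$-chains alternate correctly and that the indices $m_j$ appear symmetrically on both sides of the central block $a_1, \dots, a_t$, so that the final continued fraction matches the stated pattern exactly.
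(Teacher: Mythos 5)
Your strategy for the first assertion matches the paper's: reduce to the three shapes of Remark~\ref{tres casos}, use the $(-1)$-curve $F\subset E_i$ with $F\cdot C\geq 2$, and propagate contractions to show the $\square$-curves are $(-2)$-curves. Two points there need repair. First, you never actually eliminate the configuration of Figure~\ref{Case 1.} ($\square$-block in the interior, $\bullet$'s at both ends), which matches none of the four templates; the paper kills it by noting that the two intersection points of $F$ with $C$ would force either a loop in the tree $E_i$ or a third intersection point of $E_i$ with $C\setminus E_i$. Second, contracting $F$ raises $C_j^2$ from $-b_j$ to $-b_j+1$, not $-b_j+2$; the conclusion $b_j=2$ is the right one, but your arithmetic is off by one.

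The genuine gap is in the second assertion. You claim that in type $(iv)$ ``two neighbouring $\square$-curves (which were $(-2)$-curves) become $(-1)$-curves'' after contracting $F$. This contradicts the statement you are proving: one end of $C$ is a $(-2)$-curve but the other is a $-(2+m_s)$-curve with $m_s\geq 1$. (Indeed, if both neighbours of $F$ became $(-1)$-curves meeting each other after contracting $F$, contracting one would turn the other into a $0$-curve, and $E_i$ could not be the total transform of a point.) The actual mechanism is asymmetric: contract $F$, then the $m_s$ curves $C_r,\dots,C_{r-m_s+1}$ one at a time; each of these contractions bumps $C_1^2$ by $+1$, and the requirement that $E_i$ contract to a smooth point --- no auxiliary $(-1)$-curve is available without creating a loop in $E_i$ or a third intersection with $C$ --- forces $C_1$ to become a $(-1)$-curve exactly at that stage, i.e.\ $C_1^2=-(m_s+2)$; one then recurses on the shorter chain. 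Relatedly, you call the curves of self-intersection $-(2+m_j)$ ``$\bullet$-curves'': they are $\square$-curves, i.e.\ components of $E_i$ (the $\bullet$-curves are precisely the central block $a_1,\dots,a_t$), and their self-intersections are pinned down by contractibility of $E_i$, not by maximality of $\Gamma_{E_i}$, which only guarantees that the recursion reaches the ends of $C$. As written, your bookkeeping would not produce the stated alternating pattern.
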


\begin{proof}
We divide this proof into the three cases of Remark \ref{tres casos}. In the first two cases, the argument is the one used in Lemma $2.7$ in \cite{rana2019optimal} for T-singularities.

\textbf{Case (1).} Assume that $E_i$ has the diagram shown in Figure \ref{Case 1.}. Because of the ampleness of $K_W$ we obtain that there is a $(-1)$-curve $F$ in $E_i$ which intersects $C$ twice (see Remark \ref{FC>2}). In this situation, we would obtain either a loop in $E_i$ or a third point of intersection with $C\setminus E_i$. But these cannot happen because $E_i$ is a tree of rational curves and $E_i\cdot C=1$.  

For the next case, we will denote by $C_1,\ldots, C_l$ the $\square$ curves on the left side in the diagrams shown in Figure \ref{Case 2.} and \ref{Case 3.}.

\textbf{Case (2).} Suppose that $E_i$ has the diagram shown in Figure \ref{Case 2.}. By the same argument done in Case (1), there exists a $(-1)$-curve $F$ in $E_i$ which intersects a $\square$ curve $C_j$, and a $\bullet$ curve $C_{j'}$, in both cases transversally. Note that there are no more intersections of $F$ and curves in $E_i$, because otherwise we will have a loop in $E_i$.

In what follows, we will prove that the $(-1)$-curve $F$ must intersect $C$ as is shown in Figure \ref{Diagram (i)} or in Figure \ref{Diagram (ii)}, and that the $\square$ curves are $(-2)$-curves. Indeed, we first claim that $C_j^2=-2$. Otherwise, we would need other $(-1)$-curve disjoint to $F$ to continue contracting $E_i$, but this situation gives from the beginning either a cycle in $E_i$ or a third point of intersection with $C\setminus E_i$. So, we have $C_j^2=-2$.

Now, we note that if $C_j$ had two $\square$ neighbors, then $F$ would have multiplicity at least 2 in $E_i$, which violates that $E_i\cdot C=1$. So we have that $C_j=C_1$ or $C_j=C_l$ (see Figure \ref{Diagram (i)} and Figure \ref{Diagram (ii)}).

On the other hand, note that after contracting $C_j$, we will have the same situation above for the curve $C_2$ or $C_{l-1}$ respectively. Thus, applying the same argument above, we obtain that all curves $C_1,\dots ,C_l$ are $(-2)$-curves. So, we conclude that $E_i$ either has a diagram of type $(i)$ or $(ii)$ (see Definition \ref{Long diagram}).

For the last case, we will denote by $C_1, \dots, C_l$ the $\square$   curves on the left side of $\Gamma_{E_i}$, and
$C_{r-k+1},\dots,C_r$ the $\square$  curves on the right side.

\textbf{Case (3).} Suppose that $E_i$ has the diagram shown in Figure \ref{Case 3.}. By Remark \ref{FC>2}, we have that a $(-1)$-curve $F$ in $E_i$ intersects $C$ twice. In this case, the curve $F$ must intersect one $\square$ curve $C_j$ on the left, and one $\square$ curve $C_{j'}$ on the right (see Figure \ref{Case 2-1.8}).
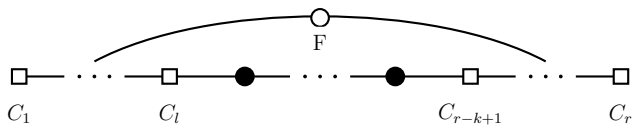
\begin{figure}[h!]
\begin{center}
\begin{tikzpicture}[roundnode/.style={circle, draw=black, fill=white, thick,  scale=0.6},squarednode/.style={rectangle, draw=black, fill=white, thick, scale=0.7},roundnodefill/.style={circle, draw=black, fill=black, thick,  scale=0.6},roundnodewhite/.style={circle, draw=black, fill=white, thick,  scale=0.6},letra/.style={rectangle, draw=white, fill=white, thick, scale=0.7}]

\draw[black, thick] (-3.4,0) -- (-4,0){};
\draw[black, thick] (-0.4,0) -- (-2.6,0){};

\draw[black, thick] (0.4,0) -- (2.6,0){};
\draw[black, thick] (3.4,0) -- (4,0){};

\node[squarednode] at (-4,0){} ;
\node[letra] at (-4,-0.5){$C_1$} ;
\node[squarednode] at (-2,0){} ;
\node[letra] at (-2,-0.5){$C_l$} ;

\node[roundnodefill] at (-1,0){};
\node[roundnodefill] at (1,0){};

\node[squarednode] at (2,0){} ;
\node[letra] at (2,-0.5){$C_{r-k+1}$} ;
\node[squarednode] at (4,0){} ;
\node[letra] at (4,-0.5){$C_{r}$} ;

\filldraw[black] (-3.2,0) circle (0.5pt) node[anchor=west] {};
\filldraw[black] (-3,0) circle (0.5pt) node[anchor=west] {};
\filldraw[black] (-2.8,0) circle (0.5pt) node[anchor=west] {};

\filldraw[black] (-0.2,0) circle (0.5pt) node[anchor=west] {};
\filldraw[black] (0,0) circle (0.5pt) node[anchor=west] {};
\filldraw[black] (0.2,0) circle (0.5pt) node[anchor=west] {};

\filldraw[black] (2.8,0) circle (0.5pt) node[anchor=west] {};
\filldraw[black] (3,0) circle (0.5pt) node[anchor=west] {};
\filldraw[black] (3.2,0) circle (0.5pt) node[anchor=west] {};

\draw[black,  thick] (-3,0.2).. controls (-1.5,1) and (1.5,1).. (3,0.2);
\node[roundnodewhite] at (0,0.78){};
\node[letra] at (0,0.45) {F};
\end{tikzpicture}
\caption{Case (3), and $(-1)$-curve $F$.}
 \label{Case 2-1.8}
  
\end{center}
\end{figure}

We first claim that $C_j^2=-2$ or $C_{j'}^2=-2$. On the contrary, we would need another $(-1)$-curve to contract them, but this would give either a loop in $E_i$ or a third point of intersection with $C$. 

Let us say $C_{j'}^2=-2$, then we must have that $C_{j'}=C_r$. Indeed, if we suppose that $C_{j'}$ has two $\square$ neighbors, then after contracting $F$ and $C_{j'}$, we will have a triple point in some $E_j$. But, it is not possible because $E_j$ is a normal simple crossings tree of rational curves. Thus, we have that $C_j'$ is one of the curves $C_{r-k+1}$ or $C_r$. Now, if we had $C_{j'}=C_{r-k+1}$ then $C_{r-k+1}$ would have multiplicity at least 2 in $E_i$, which contradicts the fact that $E_i\cdot C=1$. Thus, we obtain that $C_{j'}=C_r$.

On the other hand, we will prove that $C_j$ must be either $C_1$ or $C_l$. Otherwise, assume that $C_j$ has two $\square$ neighbors. Let $i'$ be a index such that $C_{r-i'+1}^2=\cdots =C_{r}^2=-2$, and $C_{r-i'}<-2$. We know that $C_{r-k}$ is not a curve in $E_i$, then we have $C_{r-k}^2<-2$, and $i'\leq k$. Assume that after blowing down $F,C_{r},\dots , C_{r-i'+1}$, we have that $C_j$ becomes a $(-1)$-curve. If $i'=k$, then $C_{r-i'+1}$ would has multiplicity at least two in $E_i$, but then $E_i\cdot C>1$. If instead $i'<k$, then contracting those curves and $C_j$ would give a triple point, which is not possible. Thus, we have that $C_j$ does not become a $(-1)$-curve. In this situation, we must need another $(-1)$-curve $F'$ to contract $C_j$. If $F'$ is disjoint of $F,C_{r},\dots , C_{r-i'+1}$, then $F'$ must intersect a $\bullet$ curve. But this would impliy that $E_i\cdot C>1$. So, $F'$ must intersect some of the $C_{r},\dots , C_{r-i'+1}$ in $E_i$, which is not possible because $E_i$ does not have loops. Thus, the unique possible case is that $i'=k$. But this implies that $C_{r-k+1}$ would have multiplicity at least two in $E_i$, which violates that $E_i\cdot C=1$. Thus, we obtain that $C_j$ cannot have two $\square$ neighbors. So, we know that $C_j=C_1$ or $C_j=C_l$. In this situation, we have that $E_i$ has a diagram of type $(iv)$ if $C_j=C_1$. (See Definition \ref{Long diagram}). 

Assume that $C_j=C_l$. We want to show that $E_i$ has a diagram of type $(iii)$. Indeed, let $i'$ be the maximal number such that $C_{r}^2=\cdots =C_{r-i'+1}^2=-2$, and $C_{r-i'}^2<-2$. As we did before, we know that $i'\leq k$ because $C_{r-k}^2<-2$. Let us first suppose that $i'<k$. Note that if after blowing down $F$ and those $(-2)$-curves the curve $C_l$ becomes a $(-1)$-curve, then $C_l$ would have multiplicity at least $2$ in $E_i$ because $i'<k$. So, $C_l$ does not became a $(-1)$-curve. Then, we must need another $(-1)$-curve $F'$ to contract $C_l$. If $F'$ is a $(-1)$-curve at the beginning, this would imply either a loop in $E_i$ or a third point of intersection of $E_i$ with $C$, none of which is possible. So, we have that $F'$ must intersect some curve in $C_r,\ldots,C_{r-i'+1}$. It implies that $C_{r-k+1}$ would has multiplicity at least two in $E_i$ which violates the fact that $E_i\cdot C=1$. 

Thus we have that $i'=k$, that is $C_{r-k+1}^2=\cdots =C_{r}^2=-2$. In this case, after blowing down $F$ and those $(-2)$-curves, the curve $C_{l}$ must become a $(-1)$-curve. On the contrary, we need another $(-1)$-curve intersecting $C_l$. That curve must be a $(-2)$- curve in the beginning in the process of contracting $E_i$ and it intersects the curve $C_{r+1-k}$, which implies that $C_{r+1-k}$ has at least multiplicity two in $E_i$. But this contradicts that $E_i\cdot C=1$. So, we have $C_l^2=-(k+2)$. By using a similar argument, it is shown that $C_1^2=\cdots C_{l-1}^2=-2$. Therefore, we have shown that $E_i$ has a diagram of type $(iii)$. 

For the last part of the proof, let us assume that $E_i$ has a diagram of type $(iv)$, and that $\Gamma_{E_i}$ is maximal. Let $m_s$ be the maximal number such that $C_{r-m_s}^2<-2$, and $C_r^2=\dots=C_{r-m_s+1}^2=-2$. If after blowing down the curves $F,C_r,\dots,C_{r-m_s+1}$ the curve $C_1$ does not became a $(-1)$-curve then we must need another $(-1)$-curve $F'$ in $E_i$ to contract $C_1$. If we have that $F'$ is a $(-1)$-curve at the beginning, it would imply either a loop in $E_i$ or a third point of intersection of $E_i$ with $C$. So, $F'$ must intersect the curves $F,C_r,\dots,C_{r-m_s+1}$, and then $C_{r-k+1}$ would has multiplicity at least two in $E_i$ which violates the fact that $E_i\cdot C=1$.

Thus, we have shown that $C_1$ is contracted after blowing down the curves $F,C_r,\dots,C_{r-m_s+1}$, and then $C_1^{2}=-(m_s+2)$, where $0< m_s \leq k-1$. We also note that after contracting the curves $F,C_r,\dots,C_{r-m_s+1}$ we obtain the same situation for the remaining curves in $C$, and then we can apply the same analysis. Therefore, we obtain that $C$ has continued fraction:
\begin{equation*}
    [\dots,2,2+m_3,\underbrace{2,\dots,2}_{m_2-1},2+m_1,a_1,\dots, a_t,\underbrace{2,\dots, 2}_{m_1-1},2+m_2,2,\dots],
\end{equation*}
where $-a_1,\dots,-a_t$ correspond to the self-intersection of the $\bullet$ curves in $\Gamma_{E_i}$, and $\{m_1,\dots,m_s\}$ is a fixed sequence of natural numbers. 
\end{proof}

\begin{remark} We recall that diagrams of type $(iv)$ were discarded on Lemma $2.7$ in \cite{rana2019optimal} for T-singularities, because of the ampleness of $K_W^2$, we cannot have a $(-1)$-curve intersecting both ends in a T-configuration. Also, diagrams of type $(iii)$ were discarded on Lemma $2.7$ in \cite{rana2019optimal} for T-singularities, because we cannot have a T-configuration with $(-2)$-curves in both ends.
\end{remark}

\begin{remark}\label{sequence E} Let $E_i$ be an exceptional divisor with diagram of type $(iv)$ such that $\Gamma_{E_i}$ is maximal. Assume that $E$ is a pullback of a curve in $E_i$. We associate to $E$ the sub sequence of $\{m_1,\dots,m_s\}$ which corresponds to the curves $C_{m_j}$ with $C_{m_j}^2=-(2+m_j)$ that are contracted in $E$.
\end{remark}

\begin{remark}\label{Divisores maximales} We remark that only one of the following situations can happen.
\begin{itemize}

    \item We have that $E_i \cdot C \geq 2$ for all $i$.
    
    \item There is a unique exceptional divisor $E_i$ such that its graph is maximal.
    
    \item There are two exceptional divisors $E_i$, $E_j$ such that their graphs are maximal. Moreover, we have that $\Gamma_{E_i}$, $\Gamma_{E_j}$ must be of type $(i)$ or $(ii)$.
\end{itemize}

\end{remark}

\begin{notation}\label{def-delta} The number of exceptional divisors $E_j$ such that $E_j\cdot C=1$ will be denoted by $\delta$.
\end{notation}

\begin{lemma}\label{delta-bound} Under conditions of Theorem \ref{deltas}. Let $E_1, \dots ,E_m$ be the exceptional divisors defined after Diagram \eqref{Diagram}. (They satisfy $E_i^2=-1$ and $E_i \cdot E_j=0$.) We have that one of the following cases holds:
\begin{itemize}

\item[(A)] For every $i$ we have $E_i\cdot C\geq 2$. In this case $\delta=0$.

\item[(B)] There is a unique $\Gamma_{E_i}$ maximal graph. Assume that $E_i$ contains only the curves $C_1,\dots ,C_l$ in $C$. Then
\begin{itemize}

\item[(B.1)] $\delta=l$ if $E_i$ has a diagram of type $(i)$.

\item[(B.2)] $\delta=1$ if $E_i$ has a diagram of type $(ii)$.
\end{itemize}

\item[(C)] There is a unique $\Gamma_{E_i}$ maximal graph. Assume that $E_i$ contains only the curves $C_1,\dots ,C_l,C_{r+1-k},\dots ,C_r$ in $C$. Then    
\begin{itemize}

\item[(C.1)] $\delta= k+1$ if $E_i$ has a diagram of type $(iii)$.

\item[(C.2)] $\delta= k+l$ if $E_i$ has a diagram of type $(iv)$.
\end{itemize}

\item[(D)] There exist two maximal graphs $\Gamma_{E_i},\Gamma_{E_{i'}}$. Assume that $E_i$ only contains the curves $C_1,\dots ,C_l$, and that $E_{i'}$ only contains the curves $C_{r+1-k},\dots, C_r$ in $C$. Then 
\begin{itemize}
    \item[(D.1)] $\delta=l+k$ if $E_i$ and $E_{i'}$ have diagrams of type $(i)$.
    
   \item[(D.2)] $\delta=l+1$ if $E_i$ has a diagram of type $(i)$, and $E_{i'}$ has a diagram of type $(ii)$.
   
    \item[(D.3)] $\delta=2$ if $E_i$ and $E_{i'}$ have diagrams of type $(ii)$.
    \end{itemize}
\end{itemize}

\end{lemma}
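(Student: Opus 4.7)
The plan is to organize the count of $\delta$ via the classification in Remark \ref{Divisores maximales}, and then in each case to trace the sequence of $(-1)$-curves contracted in $\pi\colon X\to S$ and identify which of the pullback divisors $E_j$ satisfy $E_j\cdot C=1$. The starting observation is that by Lemma \ref{E.C=1} any $E_j$ with $E_j\cdot C=1$ must have $\Gamma_{E_j}$ a long diagram, and hence must be dominated by one of the maximal graphs; thus every such $E_j$ arises as the pullback of an intermediate $(-1)$-curve produced during the contraction of the maximal $E_i$ (or $E_{i'}$). Case (A) is then immediate.

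For cases (B.1), (B.2), the contraction inside the maximal $E_i$ starts with $F$ and proceeds along the chain of $(-2)$-curves in the natural order. In (B.1), where $F$ attaches to the chain endpoint $C_1$, the pullbacks are $E_{m-k}=F+C_1+\cdots+C_k$; using $F\cdot C=2$ and the telescoping identity $\bigl(\sum_{i=1}^k C_i\bigr)\cdot C=-1$ (the $-1$ contributed by the endpoint $C_1$), one obtains $E_{m-k}\cdot C=1$ for every $k=1,\ldots,l$ and $E_m\cdot C=2$, whence $\delta=l$. In (B.2), where $F$ attaches instead to the interior end $C_l$, the pullbacks $E_{m-k}=F+C_{l-k+1}+\cdots+C_l$ give $E_{m-k}\cdot C=2$ for $k<l$ (the telescoping sum is $0$ since no endpoint is yet in play) and drop to $E_{m-l}\cdot C=1$ only when $C_1$ finally enters, so $\delta=1$. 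The unifying principle is that an intermediate pullback contributes to $\delta$ precisely when its set of $\square$-curves contains a chain endpoint of $C$.

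Cases (C.1), (C.2) follow the same principle, applied to the longer contraction sequences of types (iii), (iv). For (C.1), the contractions $F, C_r, C_{r-1}, \ldots, C_{r-k+1}$ each reach the right endpoint $C_r$, producing $k$ pullbacks with $E_j\cdot C=1$; the next contractions $C_l, C_{l-1}, \ldots, C_2$ yield pullbacks in which $C_l$ accumulates multiplicity (since $C_l$ passes through the successively contracted points), and a direct computation shows $E_j\cdot C=2$ for all of them; the very last contraction, reaching the left endpoint $C_1$, contributes the remaining $+1$, giving $\delta=k+1$. For (C.2), the curve $F$ joins both endpoints $C_1$ and $C_r$, so after contracting $F$ we may proceed with contractions at either end, and an induction on the sequence $\{m_1,\ldots,m_s\}$ of Lemma \ref{E.C=1} shows that each of the $k+l$ contractions of $\square$-curves consumes an endpoint of the residual chain and contributes a pullback with $E_j\cdot C=1$, yielding $\delta=k+l$. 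Finally, cases (D.1)--(D.3) reduce to two independent instances of (B.1) or (B.2), one per maximal graph; since $E_i\cdot E_{i'}=0$ the contributions add, yielding $\delta=l+k$, $l+1$, or $2$ respectively.

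The main obstacle is case (C.2): the pattern of $\square$ self-intersections is dictated by the recursive $\{m_1,\ldots,m_s\}$ structure, and tracking the multiplicities of each component of the successive pullbacks as one alternates between the two endpoints of the chain requires a careful induction on the length $s$. Already the base case $s=1$ requires the full telescoping calculation sketched above to verify that every $\square$-contraction, and not just those of the outermost $(-2)$-blocks, produces a pullback with $E_j\cdot C=1$.
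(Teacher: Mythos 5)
Your overall strategy coincides with the paper's: classify the situation via Remark \ref{Divisores maximales}, trace the contraction of the maximal divisor(s), and count which intermediate pullbacks meet $C$ in exactly one point; the values of $\delta$ you arrive at in each case are the correct ones, and your treatment of (A), (B) and (D) matches the paper's. There are, however, two places where the argument as written has genuine gaps. The first is the step ``its graph must be dominated by one of the maximal graphs; thus every such $E_j$ arises as the pullback of an intermediate $(-1)$-curve produced during the contraction of the maximal $E_i$.'' This does not follow: the subtree relation only controls which components of $C$ occur in the support of $E_j$, and a priori $E_j$ could contain further components supported off $C$, in which case it need not be one of the canonical intermediate pullbacks. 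The paper closes this by writing $E=\sum_j c_jC_j+cF+D$ with $D$ effective and supported off $C$, showing $D\cdot C=0$ by a case analysis on where $D$ could attach (using $E\cdot C=1$ and the absence of loops in $E$), and then concluding $D=0$ because contracting a nonzero $D$ with $D\cdot C=0$ would require a $(-1)$-curve disjoint from $C$, which contradicts ampleness of $K_W$ (Remark \ref{FC>2}). Without this step the identification of the divisors counted by $\delta$ with the intermediate pullbacks is unjustified. Relatedly, your ``unifying principle'' (a pullback contributes iff its $\square$-curves contain an endpoint of $C$) is contradicted by your own computation in (C.1): the pullbacks $(k+1)F+kC_r+\cdots+C_{r-k+1}+C_l+\cdots+C_{l-j}$ contain the endpoint $C_r$ in their support, with multiplicity $k$, yet meet $C$ in $2$. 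The correct bookkeeping must track multiplicities, not supports.

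The second gap is that case (C.2) is not actually proved. You assert that ``each of the $k+l$ contractions of $\square$-curves consumes an endpoint of the residual chain and contributes a pullback with $E_j\cdot C=1$'' and defer the verification to ``a careful induction on the length $s$,'' but that induction is precisely the content that has to be supplied, and it is the heart of the lemma. The paper carries it out by introducing, for a given pullback $E$, the multiplicities $c_{m_j}$ of the curves $C_{m_j}$ with $C_{m_j}^2=-(2+m_j)$ coming from the sequence $\{m_1,\dots,m_s\}$ of Lemma \ref{E.C=1}, establishing the recursion $c_{m_j}=c_{m_{j-2}}+m_{j-1}c_{m_{j-1}}$ from the order of contraction, and verifying that the resulting identity
$c_{m_s}(1-m_s)+c_{m_{s+1}}+\sum_{j=1}^{s-1}c_{m_j}(-m_j)=1$
holds for every pullback of a curve in $E_i$, independently of the initial values $c_{m_1},c_{m_2},\dots$ (which may vanish for the pullbacks arising late in the contraction). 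Only after this computation does the count $\delta=k+l$ follow. Your base case $s=1$ is handled correctly (it reduces to (C.1)), but the general step is missing.
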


\begin{proof} We divide the proof into the cases of the statement. If there are not exceptional divisor with a long diagram, then by Lemma \ref{EC>1}, and Lemma \ref{E.C=1} we have that $E_i\cdot C\geq 2$ for every $i$, and so $\delta=0$. This shows Case \textbf{(A)}. In what follows, we will suppose that there are exceptional divisors with a long diagram.

\textbf{(B)} Suppose that there is a unique $\Gamma_{E_i}$ which is maximal. Assume that $E_i$ contains only the curves $C_1,\dots ,C_l$ in $C$. So, by this assumption and Lemma \ref{E.C=1} we obtain that the graph $\Gamma_{E_i}$ is of type (i) or (ii), $C_j^2=-2$ for every $1\leq j \leq l$, and $C_{l+1}^2\leq -3$. Without loss of generality, assume that $\pi$ starts by blowing down $F$, where $F$ is the $(-1)$-curve in $E_i$, that is $E_m=F$. 

Let $E$ be an exceptional divisor such that $E\cdot C=1$. By Lemma \ref{E.C=1}, we have that $E$ has a long diagram. Since $\Gamma_{E_i}$ is the unique maximal graph, then $E$ has a diagram of type $(i)$ or $(ii)$, and it must have as components some of the $(-2)$-curves $\{C_1,\dots, C_l\}$ or maybe all of them; otherwise we would obtain another maximal graph. Note that if the $(-1)$-curve in the diagram of $E$ is not $F$, then we have either a loop in $E$ or $E\cdot C \geq 2$, thus $F\subseteq E$, and hence $E$ has a diagram of the same type as $E_i$.

Let us write $E=c_1F+c_1C_1+c_2C_2+\cdots +c_lC_l+D$, where $c_1\geq 1$, $c_i\geq 0$ for $i>1$, and $D$ is an effective divisor which has no components of $C$ in its support. By using $1=E\cdot C=c_1+D\cdot C$, we obtain that $c_1=1$, and $D\cdot C=0$. But if $D>0$, we have that to contract $D$, it must exist another curve $(-1)$ disjoint from C, which contradicts the condition $K_W$ ample, and then $D=0$. Thus, 

$$E=c_1F+c_1C_1+c_2C_2+\cdots +c_lC_l$$
where $c_1\geq 1$, $c_i\geq 0$ for $i>1$.

At the same time, in the process of contracting $F, C_1, \ldots, C_{l}$, we obtain the following exceptional divisors:

\begin{equation*}
E_{m-j}=\left\{\begin{array}{lll}
     F+C_1+\cdots +C_{j}&if& \Gamma_{E_i}\ is\ of\ type\ (i), 1\leq j\leq l.\\
     F+C_l+\cdots +C_{l+1-j}&if& \Gamma_{E_i}\ is\ of\ type\ (ii), 1\leq j\leq l.
         
\end{array}
\right.
\end{equation*}    

With this notation we have that $E_i=E_{m-l}$. If $\Gamma_{E_i}$ is of type $(i)$, analyzing the graph of $E_{m-j}$, we obtain that only for $0< j \leq l$ we could have $E_{m-j}\cdot C=1$.
In the case that $\Gamma_{E_i}$ is of type $(ii)$, we have that $E_i$ is the only divisor such that $E_{m-j}\cdot C=1$ in the list $0\leq j \leq l$. For case (i), we have $E_{m-j}\cdot C=1$ for all $j$.

Therefore, if $E_i$ has a diagram of type $(i)$ then $\delta=l$, and if $E_i$ has a diagram of type $(ii)$ then $\delta= 1$. (See \cite[Lemma 2.10]{rana2019optimal}).

\textbf{(C)} Suppose that there is a unique $\Gamma_{E_i}$ which is maximal. Assume that $E_i$ contains only the curves $C_1,\dots ,C_l,C_{r+1-k},\dots ,C_r$ in $C$. By Lemma \ref{E.C=1}, we have that $E_i$ can only have diagram of type $(iii)$ or $(iv)$. We assume that $\pi$ starts by blowing down the $(-1)$-curve $F$ in $E_i$, that is $E_m=F$. We recall that $C_1^2=-2$ or $C_r^2=-2$ (see proof of Lemma \ref{E.C=1}). Let us say that $C_r^2=-2$.

\textbf{(C.1)} Say that $E_i$ has a diagram of type $(iii)$. Let $E$ be a exceptional divisor such that $E\cdot C=1$, by Lemma \ref{E.C=1} we have that $E$ has a long diagram. So, because $\Gamma_{E_i}$ is maximal, then $E$ must have some of $C_1,\dots ,C_l,C_{r-k+1},\dots ,C_r$ (or maybe all of them) as components; otherwise we would obtain another maximal graph. We also note that the $(-1)$-curve in the diagram of $E$ is $F$. Otherwise, we would have either a loop in $E_i$ or $E_i\cdot C >1$, but this is not possible. So, we can write $E$ as follows.

\begin{equation*}
E=c_1C_1+\cdots +c_lC_l+c_{r-k+1}C_{r-k+1}+\cdots+ c_rC_r+(c_l+c_r)F+D,    
\end{equation*}
where $c_j\geq 0$, $c_r>0$, and $D$ is an effective divisor which has no components of $C$ in its support. Since, we have that $C_j^2=-2$ for $j\in \{1,\dots,l-1,r-k+1,\dots,r\}$, and $C_l^2=-(k+2)$ (see Figure \ref{Diagram (iii)}). Then,

\begin{equation}\label{temp2}
    1=E\cdot C=c_r-(c_1+(k-2)c_l)+D\cdot C. 
\end{equation}

Now, we prove that $D\cdot C=0$. On the contrary, suppose that $D\cdot C>0$. We first note that if $c_1=0$ then $c_2=\cdots=c_l=0$, since otherwise $E$ would not have a long diagram, and so $E\cdot C>1$. In this case, because $D$ is effective then by (\ref{temp2}) we obtain that $D\cdot C=0$. If instead $c_1>0$ then $c_2,\dots,c_l>0$ because $E$ has a long diagram. Observe that $D$ can only intersect one component $C_j$ of $E$, otherwise after contracting $D$, we would obtain a loop in $E$  which is not possible. Also, we note that $D$ does not intersect the curves $C_{l}$ or $C_{r-k+1}$, since otherwise we would have $c_l$ or $c_{r-k+1}>1$, and then $E\cdot C>1$. In addition, if $D$ intersects a curve $C_j$ for some $1\leq j \leq l-1$, then $c_l>1$ which violates the fact that  $E\cdot C=1$. Thus, the divisor $D$ could only intersect a component $C_j$ of $E$ for some $j=r-k+2,\dots,r$. Then, contracting $D$ does not affect the curves $C_1,\dots, C_l$, and so we obtain $c_1=\cdots=c_l=1$. Finally, because $c_{r-k+1}=1$ we obtain that $c_r\geq k$. Then, by (\ref{temp2}), we obtain $D\cdot C=k-c_r\leq 0$. In both cases, we conclude that $D\cdot C=0$.

Using the fact that $D\cdot C=0$, we will prove that $D=0$. Indeed, if we had that $D>0$, then to contract $D$ there must exist another $(-1)$-curve  disjoint from C (that is, a $(-1)$-curve from the beginning in $E$) because $D$ does not intersect $C$. But, this contradicts the condition $K_W$ ample, and then $D=0$. 

Therefore, the divisor $E$ shows up in the process of contracting $E_i$. Note that in that process, we obtain the exceptional divisors $E_{m-(j+1)}= F+C_r+\cdots +C_{r-j}$ where $0\leq j\leq k-1$, and that $E_{m-(k+j+1)}=(k+1)F+kC_r+\cdots +C_{r-k+1}+C_l+\cdots +C_{l-j}$ where $0\leq j\leq l-1$. Here, because $\Gamma_{E_i}$ is the unique maximal graph, then $E_i=E_{m-(k+l)}$. By analyzing the graph of $E_{m-j}$ and by using \eqref{temp2}, we obtain that $E\cdot C=1$ only for $E_i$ and $E_{m-(j+1)}$, where $0\leq j\leq k-1$. Thus, we have $\delta=k+1$.

\textbf{(C.2)} Say that $E_i$ has a diagram of type $(iv)$. Let $E$ be an exceptional divisor such that $E\cdot C=1$. As in the Case (C.1), we have that $E$ must have some of $C_1,\dots,C_l,C_{r-k+1},\dots,C_r$ (or maybe all of them) as components, and that the $(-1)$-curve in the diagram of $E$ is $F$. Thus, we can write $E$ as follows.

\begin{equation*}
E=c_1C_1+\cdots +c_lC_l+c_{r-k+1}C_{r-k+1}+\cdots+ c_rC_r+(c_1+c_r)F+D,    
\end{equation*}
where $c_j\geq 0$, $c_r>0$, and $D$ is an effective divisor which has no components of $C$ in its support. Now, we prove that $D\cdot C=0$. On the contrary, suppose that $D\cdot C>0$. Note that $D$ can only intersect one component $C_j$ of $E$, otherwise after contracting $D$, we would obtain a loop in $E$  which is not possible. In addition, we have that $D$ must be contracted after blowing down the curves $C_j$ in $E$. Otherwise, we would obtain a loop in $E$. Let $j\leq l$ the maximal number such that $c_j>0$, and let $j'\leq r$ the minimal number such that $c_{j'}>0$. If we have that $D$ intersect one component of $C$ in $E$, then we would have that $c_j>0$ or $c_{j'}>0$, neither of which is possible. (both imply $E\cdot C>1$). 

Therefore, we have that $D$ can only intersect one curve in $C$ which is not in $E$. Then to contract $D$ it must exist another $(-1)$-curve disjoint from the curves of $C$ in $E$ or a $(-2)$-curve intersecting $C_j$ or $C_{j'}$. But this is not possible because $K_W$ is ample and $E\cdot C=1$. Thus, $D\cdot C=0$.

As we proved in case (C.1), the fact $D\cdot C=0$ implies $D=0$. Thus, $E$ is a pullback of a curve in $E_i$. So, $E$ has a diagram of type $(i)$, and a $(-1)$-curve intersecting both ends of the chain or it has a diagram of type $(iv)$.

Now, we will show that $\delta=k+l$. Indeed, by following the notation described in Lemma \ref{E.C=1}, let $\{m_1,\dots,m_s\}$ be the sequence associated to $E_i$. Let us denote by $C_{m_j}$ to the curve with self-intersection $-(2+m_j)$ in $C$, and by $c_{m_j}$ its multiplicity in $E_i$. In addition, let us write $c_{m_{s+1}}:=c_r$, and $C_{m_{s+1}}:=C_r$ . With this notation, we have $c_1=c_{m_s}$, and then

\begin{equation}\label{temp3}
    E_i\cdot C=c_{m_s}(1-m_{s})+c_{m_{s+1}}+\sum_{j=1}^{s-1}c_{m_j}(-m_j). 
\end{equation}

We first show that $\delta= k+l$ in the case when $s=1$ (see Figure \ref{The case when $s=1$}). Note that in this case we have that $l=1$, and $k=m_1$.
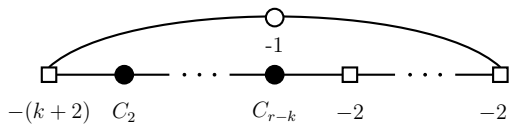
\begin{figure}[H]
\begin{center}

\begin{tikzpicture}[roundnode/.style={circle, draw=black, fill=white, thick,  scale=0.6},squarednode/.style={rectangle, draw=black, fill=white, thick, scale=0.7},roundnodefill/.style={circle, draw=black, fill=black, thick,  scale=0.6},roundnodewhite/.style={circle, draw=black, fill=white, thick,  scale=0.6},letra/.style={rectangle, draw=white, fill=white, thick, scale=0.7}]

\draw[black, thick] (-3,0) -- (-1.4,0){};
\draw[black, thick] (-0.6,0) -- (1.6,0){};
\draw[black, thick] (2.4,0) -- (3,0){};

\node[squarednode] at (-3,0){} ;
\node[letra] at (-3,-0.5){$-(k+2)$} ;

\node[roundnodefill] at (-2,0){};
\node[letra] at (-2.0,-0.5){$C_{2}$} ;

\filldraw[black] (-1.2,0) circle (0.5pt) node[anchor=west] {};
\filldraw[black] (-1,0) circle (0.5pt) node[anchor=west] {};
\filldraw[black] (-0.8,0) circle (0.5pt) node[anchor=west] {};

\node[roundnodefill] at (0,0){};
\node[letra] at (0,-0.5){$C_{r-k}$} ;

\node[squarednode] at (1,0){} ;
\node[letra] at (1,-0.5){$-2$} ;

\filldraw[black] (1.8,0) circle (0.5pt) node[anchor=west] {};
\filldraw[black] (2,0) circle (0.5pt) node[anchor=west] {};
\filldraw[black] (2.2,0) circle (0.5pt) node[anchor=west] {};

\node[squarednode] at (3,0){} ;
\node[letra] at (2.9,-0.5){$-2$} ;

\draw[black,  thick] (-3,0.1).. controls (-2,1) and (2,1).. (3,0.1);
\node[roundnodewhite] at (0,0.76){};
\node[letra] at (0,0.4) {-1};

\end{tikzpicture}
\caption{The case when $s=1$.}
\label{The case when $s=1$}
\end{center}
\end{figure}

Note that in this case, $E_i$ has also a diagram of type $(iii)$, and then $\delta=k+1$ as we proved in case (C.1). Here we obtain that $c_{m_1}=1$, $c_{m_2}=m_1$.


Assume $s>1$. We claim that $c_{m_j}=c_{m_{j-2}}+m_{j-1}c_{m_{j-1}}$ for $2<j\leq s+1$. Indeed, by Lemma \ref{E.C=1}, we have that $m_{j-1}-1$ is the number of the $(-2)$-curves between $C_{m_j}$ and $C_{m_{j-1}}$ in $C$. After contracting $C_{m_j}$ and such $(-2)$-curves, we obtain a SNC situation between $C_{m_{j-1}}$ and $C_{m_{j-2}}$. It follows from the form of contracting these curves that $c_{m_j}=c_{m_{j-2}}+m_{j-1}c_{m_{j-1}}$.

Moreover, if we assume that
\begin{equation}\label{temp4}
      c_{m_{s-2}}(1-m_{s-2})+c_{m_{s-1}}+\sum_{j=1}^{s-3}c_{m_j}(-m_j)=1, 
\end{equation}
then, by plugging $c_{m_j}=c_{m_{j-2}}+m_{j-1}c_{m_{j-1}}$ for $j=s,s+1$, and \eqref{temp4} in \eqref{temp3} , we obtain 

\begin{equation}\label{temp12}
E_i\cdot C=c_{m_s}(1-m_{s})+c_{m_{s+1}}+\sum_{j=1}^{s-1}c_{m_j}(-m_j)=1. 
\end{equation}

Note that for $E_i$ we have that $c_{m_1}=1$, and $c_{m_2}=m_1$. However, the analysis above does not depend on the initial values of $c_{m_j}$. It only depends on the form of contracting the curves in diagrams of type $(iv)$. For instance, one can have $c_{m_1}=\cdots=c_{m_j'}=0$ for some $j'\leq s$. Let $1\leq j'\leq s+1$ be the minimal number such that $C_{m_j'}$ is contracted in $E$. This implies that $c_{m_j'}>0$, and $c_{m_1}=\cdots=c_{m_{j'-1}}=0$. Since, we know that $E$ is a pullback of a curve in $E_i$, then by the form of contracting the curves in $E$  we obtain that $c_{m_j}>0$ for every $j\geq j'$. By using the form of the contraction in $E$, we compute that $c_{m_{j'+1}}=m_{j'}c_{m_{j'}}$  Thus, we know that $c_{m_j}=c_{m_{j-2}}+m_{j-1}c_{m_{j-1}}$ for every $j\geq j'+2$, and then we have that \eqref{temp3}, and \eqref{temp12} remain valid for $E$. So, we obtain that $E\cdot C=1$.

Therefore, because $k+l$ is the number of exceptional divisors $E$ that show up as pullback of curves in $E_i$, we obtain that $\delta=k+l$.


By Remark \ref{Divisores maximales}, we have the following last case.

\textbf{(D)} Suppose that there exist two maximal graphs $\Gamma_{E_i},\Gamma_{E_{i'}}$. Assume that $E_{i'}$ only contains the curves $C_{r+1-k},\dots, C_r$, and that $E_i$ only contains the curves $C_1,\dots ,C_l$ in $C$. In this case, by Lemma \ref{E.C=1} we have that $E_i$ and $E_i'$ can only have a diagram of type $(i)$ or $(ii)$, and we have that $C_1,\dots ,C_l,C_{r+1-k},\dots ,C_r$ are $(-2)$-curves. Because $E_i$, and $E_i'$ are maximal divisors such that their intersection with $C$ is equal to $1$, then we obtain that $C_{l+1}^2\leq -3$, and $C_{r-k}^2\leq -3$. As before, we assume that $\pi$ starts by blowing down the $(-1)$-curve $F$ in $E_i$. Let $F'$ be the $(-1)$-curve in the diagram of $E_i'$. We must have that $F\neq F'$, otherwise, we could not contract the divisor $E_{i'}$. Thus, we can describe separately the process to contract the divisors $E_i$, and $E_i'$ using Case (B). Combining the possible situations for $E_i$ and $E_{i'}$, we obtain the values described for $\delta$.

\end{proof}


\begin{remark}\label{no dependencia de la caracteristica algebraica} We recall the generalized Noether's inequality shown in \cite[Thm. 2.10]{tsunoda1992noether}, 

$$\chi(\mathcal{O}_W)\leq K_W^2+3.$$

We will use it in the proof of Theorem \ref{deltas} for bounding the length of a singularity only in terms of $K_W^2$.
\end{remark}

\begin{proof}[Proof of Theorem \ref{deltas}] We start by proving the initial inequality. It follows from the assumption that $\big(\sum_{i=1}^{m}E_i\big)\cdot C\geq 2m-\delta$. It follows from \eqref{canonical divisor formula WS}, and Lemma \ref{E.C} that

\begin{equation}\label{EQ4}
    \sum_{j=1}^{r}\big( b_j-2\big)\leq 2(K_W^2-K_S^2)+2\bigg(\frac{2(n-1)-q-q'}{n}\bigg)+\delta -\pi^*K_S\cdot C.
\end{equation}

In order to have the bound for the length we will use a generalization of the Bogomolov-Miyaoka-Yau inequality for orbifolds, log-BMY inequality for short (see e.g. \cite{langer2003logarithmic})

\begin{equation*}
    K_{W}^2\leq 3e_{orb}(W),
\end{equation*}
where $e_{orb}(W)$ is the orbifold Euler number of a quasiprojective surface $W$, with only isolated cyclic quotient singularities. That is defined as
\begin{equation*}
e_{orb}(W)=e(W)-\displaystyle{\sum\limits_{w\in \rm{Sing}(W)}\bigg(1-\frac{1}{|\pi_1(L_w)|} \bigg)},
\end{equation*}
where $L_w$ is the link of $w\in \rm{Sing}(W)$, and $\pi_1$ denotes the fundamental group. We recall that the germ of $w\in \rm{Sing}(W)$ is topologically the cone over a 3-manifold $S^3/G$, where $G\subset U(2,\mathbb{C})$ is a subgroup acting without fixed points. The 3-manifold $S^3/G$ is called the link of $w$.

In our case, the surface $W$ has $\rm{Sing}(W)=\{P\}$. Since the singularity $P$ is defined as the germ at the origin of a quotient of $\mathbb{C}^2$ by the group $\mathbb{Z}/n\mathbb{Z}$, we have that $|\pi_1(L_w)|=|\mathbb{Z}/n|=n$. Therefore,
\begin{equation*}
e_{orb}(W)=e(W)-\bigg(1-\frac{1}{n}\bigg).    
\end{equation*}

Plugging this formula together with $e(W)=e(X)-r$, and the Log-BMY inequality, we have
\begin{equation*}
    K_W^2\leq 3e(X)-3r-3\bigg(1-\frac{1}{n} \bigg).
\end{equation*}

By Proposition \ref{canonical divisor formula XW}, and Noether's formula for $X$, we have that
\begin{equation*}
    12\chi(\mathcal{O}_X)=K_W^2+A+e(X),
\end{equation*}
where $A=\sum_{j=1}^{r}\big(2-b_j\big)+\dfrac{2(n-1)-q-q'}{n}$. Putting these formulas together and using the fact that $\chi(\mathcal{O}_X)=\chi(\mathcal{O}_W)$ ($W$ has a rational singularity), we have that
\begin{equation}\label{EQ3.1}
    12\chi(\mathcal{O}_W)\leq 4e(X)-3r-3\bigg(1-\frac{1}{n} \bigg)+A.
\end{equation}

Also, by the Noether's formula for $X$, we have $4e(X)=48\chi(\mathcal{O}_X)-4K_X^2$. Replacing this in \eqref{EQ3.1}, and by Proposition \ref{canonical divisor formula XW}, we obtain that 
\begin{equation}\label{temp5}
    r\leq 12\chi(\mathcal{O}_W)-\dfrac{4}{3}K_W^2-A-\bigg(1-\frac{1}{n} \bigg).
\end{equation} 

Finally, using  \eqref{temp5} together with the generalized Noether's inequality (see Remark \ref{no dependencia de la caracteristica algebraica}), and the inequality for the sum $\sum_{j=1}^{r}\big(b_j-2\big)$ previously obtained, we have the boundedness for $r$.
\bigskip
\end{proof}




\begin{proof}[Proof of Corollary \ref{Cor1}] Assume that the number of $2$'s at the extremes of every $[b_1,\ldots,b_r] \in \sing(\mathcal{S})$ is bounded, say by a number $k>0$. Then, by Theorem \ref{deltas} we have that $\delta\leq k+1$, and because $K_S\geq 0$ for every surface in $\mathcal{S}$, we obtain that $r$, and $\sum_{j=1}^{r}b_j$ are bounded. So, we conclude that $Sing(\mathcal{S})$ is finite. On the other hand, if the number of $2$'s at the extremes of every $[b_1,\ldots,b_r] \in \sing(\mathcal{S})$ is not bounded, then we can construct infinitely many sets $\mathcal{S}$ of stable surfaces with $K_S$ nef, and $K_W^2\leq c$ such that $Sing(\mathcal{S})$ is infinite. (See e.g. Example \ref{Example $[4,n_0,4]$} with $n_0\geq c$).
\end{proof}


\begin{proof}[Proof of Corollary \ref{Cor2}] This follows from the facts that $K_S$ is nef, and that $\delta\leq 2$ in those cases.
\end{proof}


\section{Accumulation points for surfaces with one generalized T-singularity} 

This section describes the behavior of the accumulation points of volumes of stable surfaces with only one singularity belonging to a fixed family of generalized T-singularities. We also characterize the continued fractions that are admissible for chains. In the end, we show a way to construct accumulation points by starting in a stable surface with only one generalized T-singularity.  

\begin{proposition}\label{formation rule} Let $[b_1,\ldots,b_s]=n/q$ be a continued fraction. Let $q'$ be the inverse of $q$ modulo $n$ with $0<q'<n$, and let $m$ be the integer such that $qq'=1+mn$. Assume that $n>2$. Then, we have that $[b_1+1,b_2,\ldots,b_s,2]=N/Q$, where $N=2q-m+2n-q'$, and $Q=2q-m$. Moreover, we have that $[2,b_s,\ldots,b_2,b_1+1]=N/Q'$, where $Q'=q+n$.
\end{proposition}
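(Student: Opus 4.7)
The plan is to reduce everything to a single $2\times 2$ matrix computation. Encode Hirzebruch--Jung continued fractions via $M(a) := \begin{pmatrix} a & -1 \\ 1 & 0 \end{pmatrix}$; an easy induction on $s$ (each $M(a)$ has determinant $1$, and the last two convergents are what appear in the columns) gives
\[
M(b_1) M(b_2) \cdots M(b_s) \;=\; \begin{pmatrix} n & -q' \\ q & -m \end{pmatrix},
\]
with the two columns encoding the convergents $[b_1,\ldots,b_s] = n/q$ and $[b_1,\ldots,b_{s-1}] = q'/m$, and $\det=1$ giving exactly the relation $qq' - nm = 1$ assumed in the statement. This matrix identity is the only input I need from the general theory of continued fractions.

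For the first identity, I would exploit the factorization $M(b_1+1) = \begin{pmatrix} 1 & 1 \\ 0 & 1 \end{pmatrix} M(b_1)$. Left-multiplying the matrix above by this unipotent factor and right-multiplying by $M(2)$, a direct two-step $2\times 2$ computation gives
\[
M(b_1+1) M(b_2) \cdots M(b_s) M(2) \;=\; \begin{pmatrix} 2n+2q-q'-m & -(n+q) \\ 2q-m & -q \end{pmatrix}.
\]
Reading off the first column yields $[b_1+1, b_2, \ldots, b_s, 2] = N/Q$ with $N = 2q-m+2n-q'$ and $Q = 2q-m$, exactly as claimed.

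For the second identity I would invoke the standard reversal principle: if $P = M(a_1)\cdots M(a_s) = \begin{pmatrix} A & -B' \\ B & -B'' \end{pmatrix}$, then conjugating by $J := \operatorname{diag}(1,-1)$ together with the identity $J M(a) J = M(a)^T$ gives $M(a_s)\cdots M(a_1) = J P^T J = \begin{pmatrix} A & -B \\ B' & -B'' \end{pmatrix}$, whose first column reads off the reversed continued fraction as $A/B'$. Applied to the product computed above, where $B' = n+q$, this yields $[2, b_s,\ldots, b_2, b_1+1] = N/(n+q)$, so $Q' = q + n$.

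I anticipate no real obstacle beyond careful sign bookkeeping in the two matrix manipulations; the arithmetic itself is mechanical. As a sanity check, one computes $QQ' - 1 = (2q-m)(q+n) - 1 = q(2q+2n-q'-m) = qN$, so $QQ' \equiv 1 \pmod N$, consistent with $Q'$ being the denominator of the reverse continued fraction and thus the modular inverse of $Q$ modulo $N$.
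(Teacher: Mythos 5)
Your proof is correct, and it takes a genuinely different route from the paper's. The paper works entirely at the level of the fractions themselves: it uses the classical duality $[b_s,\ldots,b_1]=n/q'$, prepends the $2$ to get $[2,b_s,\ldots,b_1]=(2n-q')/n$, reverses again, and then must identify the resulting denominator with the canonical inverse of $n$ modulo $2n-q'$; this forces a page of inequality bookkeeping ($m<q$, $m<q'$, $0<2q-m<2n-q'$, using the hypothesis $n>2$) just to pin down the correct representative. Your $SL_2(\mathbb{Z})$ computation with $M(a)=\bigl(\begin{smallmatrix} a & -1\\ 1 & 0\end{smallmatrix}\bigr)$ produces the integers $N=2n+2q-q'-m$ and $Q=2q-m$ directly as convergents of a Hirzebruch--Jung fraction with all entries $\geq 2$, so positivity, the bound $0<Q<N$, and coprimality come for free from the standard monotonicity of such convergents, and the reversal identity $M(a_s)\cdots M(a_1)=JP^TJ$ replaces the paper's second round of inverse-hunting; your closing check $QQ'-1=qN$ confirms the consistency that the paper also implicitly relies on. The one point you gloss over slightly is the identification of the second column of $M(b_1)\cdots M(b_s)$ with $(-q',-m)^T$: the determinant gives $p_{s-1}q-nq_{s-1}=1$, but to conclude $p_{s-1}=q'$ (the representative in $(0,n)$) you need $0<p_{s-1}<p_s=n$, which again follows from all $b_i\geq 2$; worth a sentence, not a gap. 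Net effect: your argument is shorter, avoids the ad hoc inequalities, and does not actually use $n>2$.
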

\begin{proof} We know that $\frac{n}{q}=[b_1,\ldots,b_s]$ implies $[b_1,\ldots,b_s]=\frac{n}{q'}$, where $q'$ is the inverse of $q$ modulo $n$. So, we obtain that $[2,b_s,\ldots,b_1]=\frac{2n-q'}{n}$. Now, we would like to find $n'$ the inverse of $n$ modulo $2n-q'$. We observe that $n(2q-m)=q(2n-q')+1$, so $n'\equiv (2q-m) \ mod\ (2n-q')$. Thus, $[b_1,\ldots,b_s,2]=\frac{2n-q'}{n'}$.

We will prove that $n'=2q-m$. Observe that $m<q$. Otherwise, if we have that $m>q$, then $mn+1\geq qn>qq'$, but $qq'=mn+1$. Also, if we have $m=q$, then $q(q'-n)=1$. But this is impossible. So we know that $m<q$. In the same way, we obtain that $m<q'$. So, we have that $2q-m>0$. In addition, because $m+1\leq q'$, and $2n-q'> 2$ (if $n>2$), then we have that $2+m(2n-q')< q'(2n-q')$. So, we obtain that $(2q-m)< (2n-q')$ by using $q'>0$, and $qq'=1+mn$. Thus, $n'=2q-m$.

Therefore, we have that $[b_1+1,b_2,\ldots,b_s,2]=\frac{N}{Q}$, where $N=2q-m+2n-q'$ and $Q=2q-m$. 

Finally, we show that the inverse of $Q$ modulo $N$ is $q+n$. Indeed, note that $(2q-m)(q+n)\equiv 1\ mod (n)$. Also, by using that $m<q$ we obtain that $N=q+n+(q+n-m-q')>q+n>0$. Thus, we obtain that $[2,b_s,\ldots,b_2,b_1+1]=N/Q'$, where $Q'=q+n$.
\end{proof}

We will describe the Hirzebruch-Jung continued fractions, which are admissible for chains (see Definition \ref{admissible for chains}). Given a singularity $[b_1,\ldots,b_r]$, we say that a coefficient $b_i$ does not contract if the curve associated with $b_i$ does not. 

\begin{lemma}
If $[b_1,\ldots,b_r]$ is admissible for chains, then there are $b_i,b_j$ with $i\leq j$ such that  $$[b_1,\dots,b_r]-1-[b_1,\dots,b_r]-1-[b_1,\dots,b_r]$$ does not contract $b_i$ and $b_j$.
\label{center}
\end{lemma}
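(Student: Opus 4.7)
The plan is to analyze how the two contraction waves emanating from the inserted $(-1)$-curves propagate into the middle copy of $[b_1,\dots,b_r]-1-[b_1,\dots,b_r]-1-[b_1,\dots,b_r]$, and to use admissibility for chains as a quantitative barrier that prevents the waves from exhausting the middle copy.

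First I would fix the local picture: in a subchain of the form $[\dots,a,1,b,\dots]$, contracting the $(-1)$-curve corresponding to $1$ lowers both $a$ and $b$ by one, and if a neighbor now equals $1$ the contraction cascades. This process is deterministic and independent of the order in which available $1$'s are contracted. I would then analyze the simpler chain $[b_1,\dots,b_r]-1-[b_1,\dots,b_r]$ with only one inserted $1$: by symmetry, the single wave enters each adjacent copy and stops as soon as it meets a coefficient that does not drop to $1$. The admissibility-for-chains hypothesis is then used to prove that this wave cannot consume an entire copy, since total collapse of a copy from a single side, substituted into the recursion for $p_i$ of a sequence with more inserted $1$'s, would produce $p_i\leq 0$ for some $i$, contradicting the positivity required by Definition \ref{admissible for chains}.

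Next I would apply this to the three-copy chain. The middle copy receives one wave from each side, and by the single-wave analysis each wave alone leaves at least one coefficient of the middle copy untouched. The key additional step is to rule out that the two waves, interacting where they meet, jointly erase the whole middle copy. If they did, the reduced continued fraction of the three-copy block would have fewer than two surviving entries; re-embedding this block inside a longer sequence with additional inserted $1$'s (which admissibility for chains allows) and tracking the recursion $p_{i+1}=a_{i+1}p_i\pm p_{i-1}$ would once again produce a non-positive $p_i$, contradicting admissibility. From the surviving coefficients on the left and on the right of the meeting point I then extract indices $i\leq j$ in $\{1,\dots,r\}$ so that $b_i$ and $b_j$ are not contracted.

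The main obstacle is the combinatorics of the interaction when the two contraction waves meet. A single wave's reach is controlled by the initial run of $2$'s at the corresponding end of $[b_1,\dots,b_r]$, but when two waves meet in the middle copy a single contraction can trigger cascades that neither wave alone would initiate. Making the argument rigorous requires careful bookkeeping of the values $p_i$ along the sequence with sufficiently many inserted $1$'s, translating ``full joint collapse of the middle copy'' into a concrete inequality $p_i\leq 0$ for some $i$; it is precisely here that one needs the full strength of admissibility for chains (for \emph{every} number of inserted $1$'s) rather than admissibility of the three-copy sequence alone.
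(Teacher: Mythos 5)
Your proposal has a genuine gap: the step that carries all the content of the lemma --- ruling out that the two contraction cascades jointly erase the middle copy --- is never actually proved, but deferred to unspecified ``careful bookkeeping of the values $p_i$ along the sequence with sufficiently many inserted $1$'s.'' Everything established before that point (the local rule for contracting a $1$, the single-wave analysis of $[b_1,\ldots,b_r]-1-[b_1,\ldots,b_r]$) does not control the joint collapse, as you yourself concede when you note that a contraction triggered by one wave can start cascades the other wave alone would not initiate. So the proposal in effect restates the lemma as an unproved claim about the interaction of the two waves, with no mechanism for converting ``the whole middle copy contracts'' into a concrete violation of admissibility.

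The paper supplies exactly such a mechanism, and it is an induction on the length $r$, not a wave decomposition. If every $b_i$ in the middle copy is contracted, the cascade can only begin if $b_1=2$ or $b_r=2$ (say $b_r=2$); after one round of contractions the three-copy configuration contains a three-copy configuration of the strictly shorter chain $[b_1-1,b_2,\ldots,b_{r-1}]$ whose middle copy must again fully contract, and the base case $r=1$ forces the forbidden pattern $[1,1]$ (two adjacent $(-1)$-curves), contradicting admissibility. Note that this argument lives entirely inside the configuration with two inserted $1$'s, so your closing assertion that one needs the full strength of Definition \ref{admissible for chains} for every number of inserted $1$'s, rather than admissibility of the three-copy sequence alone, is not what the proof actually requires; the extra copies you propose to adjoin are a detour, and the missing ingredient is the length-reduction step.
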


\begin{proof}
Otherwise we would have eventually inside of the contraction a situation $[1,1]$, and that makes the chain not admissible. In order to prove that, we will use induction on $r$. 


We first compute the base case for $r=1$. Here, to contract $b_1$ we must have either $b_1=2$ or $b_1=3$, and so we obtain either the situation $[1,1,1,2]$ or $[1,1]$, respectively.

Let us suppose that for every $[\textbf{a}]=[a_1,\ldots,a_k]$, and $k<r$ if $a_i$ are contracted for every $i$, then we obtain eventually the situation $[1,1]$ inside of the contraction $[\textbf{a}]-1-[\textbf{a}]-1-[\textbf{a}]$. Now, let $k=r$. We must have that $b_1=2$ or $b_r=2$. Assume, without loss of generality (we could flip the order), that $b_r=2$. Then, after contracting $b_r$, we have  
$$1+[b_1-1,\dots,b_{r-1},1,b_1-1,\dots,b_{r-1},1,b_1-1,\dots,b_{r-1},2],$$
and so, we obtain inside of it the situation $[\textbf{a}]-1-[\textbf{a}]-1-[\textbf{a}]$, where $[\textbf{a}]=[b_1-1,b_2,\ldots,b_{r-1}]$. By the inductive hypothesis, we conclude that the situation $[1,1]$ will appear inside of $[\textbf{a}]-1-[\textbf{a}]-1-[\textbf{a}]$, and so inside of $[b_1,\dots,b_r]-1-[b_1,\dots,b_r]-1-[b_1,\dots,b_r]$.
\end{proof}

The $[b_i,\ldots,b_j]$ is a sort of core which is necessary for the property admissible for chains. 

\begin{lemma}
Let $0<a<n$ be coprime integers, let $\frac{n}{a}=[x_1,\ldots,x_f]$ and $\frac{n}{n-a} =[y_1,\ldots,y_g]$. Then $$[x_1,\ldots,x_f,1,y_g,\ldots,y_1] =0.$$

\label{dual}
\end{lemma}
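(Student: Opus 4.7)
The plan is to encode Hirzebruch-Jung continued fractions as products of the unimodular $2\times 2$ matrices $R_a=\bigl(\begin{smallmatrix} a & -1 \\ 1 & 0 \end{smallmatrix}\bigr)$, so that the identity reduces to the vanishing of a single matrix entry. A routine induction on $s$ gives
$$R_{a_1}\cdots R_{a_s}=\begin{pmatrix} N(a_1,\ldots,a_s) & -N(a_1,\ldots,a_{s-1})\\ N(a_2,\ldots,a_s) & -N(a_2,\ldots,a_{s-1})\end{pmatrix},$$
where $N(a_1,\ldots,a_s)$ is the numerator of $[a_1,\ldots,a_s]$ (with $N(\emptyset)=1$) and $[a_1,\ldots,a_s]=N(a_1,\ldots,a_s)/N(a_2,\ldots,a_s)$. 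Setting $U=R_{x_1}\cdots R_{x_f}$ and $V=R_{y_g}\cdots R_{y_1}$, the lemma becomes the assertion that the $(1,1)$-entry of $UR_1V$ vanishes.

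Next I would determine the relevant columns. By hypothesis the first column of $U$ is $(n,a)^T$. Since each $R_b$ has determinant $1$, $\det U=1$ reads $a\,N(x_1,\ldots,x_{f-1})-n\,N(x_2,\ldots,x_{f-1})=1$, so $N(x_1,\ldots,x_{f-1})\equiv a^{-1}\pmod{n}$. A short induction using the right-end recursion $N(x_1,\ldots,x_s)=x_s N(x_1,\ldots,x_{s-1})-N(x_1,\ldots,x_{s-2})$ with $x_i\geq 2$ shows that $N$ is strictly increasing in length, giving $0<N(x_1,\ldots,x_{f-1})<n$; hence this number is exactly $a'$, the reduced inverse of $a$ modulo $n$ with $0<a'<n$. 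For $V$, the first column is $(N(y_g,\ldots,y_1),N(y_{g-1},\ldots,y_1))^T$, and since reversal of a Hirzebruch-Jung continued fraction inverts the denominator modulo the numerator, together with $(n-a)^{-1}\equiv -a^{-1}\equiv n-a'\pmod{n}$, this column is $(n,\,n-a')^T$.

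Combining these facts, the first column of $R_1V$ is $R_1\cdot(n,n-a')^T=(a',\,n)^T$, so the $(1,1)$-entry of $UR_1V$ is the dot product of the first row $(n,-a')$ of $U$ with $(a',n)^T$, namely $n\cdot a'+(-a')\cdot n=0$. This is precisely $N(x_1,\ldots,x_f,1,y_g,\ldots,y_1)=0$, and since the denominator of the continued fraction is positive, $[x_1,\ldots,x_f,1,y_g,\ldots,y_1]=0$ as claimed.

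The only delicate step is the identification $N(x_1,\ldots,x_{f-1})=a'$ rather than $a'+kn$ for some $k\geq 0$; this hinges on the strict monotonicity of numerators along a Hirzebruch-Jung chain, which uses $x_i\geq 2$ essentially, with the base case $f=1$ handled by $N(\emptyset)=1$ together with $a=a'=1$. Beyond that the argument is a direct $2\times 2$ matrix manipulation combined with the standard reversal property of Hirzebruch-Jung continued fractions.
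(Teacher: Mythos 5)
The paper does not actually prove Lemma \ref{dual}: it is stated as well-known (it is the algebraic fact underlying Riemenschneider's dot diagram) and no argument is given. Your matrix-continuant proof is correct and self-contained, so it supplies what the paper omits. The computation of the first columns of $U$ and $V$, the identification $N(x_1,\ldots,x_{f-1})=a'$ via the determinant relation $a\,N(x_1,\ldots,x_{f-1})-n\,N(x_2,\ldots,x_{f-1})=1$ together with strict monotonicity of continuants for entries $\geq 2$, and the reversal property all check out, including the edge cases $f=1$ (where $a=a'=1$) and $a=n-1$. One small improvement: rather than merely asserting that the denominator of the assembled fraction is positive, note that the $(2,1)$-entry of $UR_1V$ is $(a,\,-N(x_2,\ldots,x_{f-1}))\cdot(a',n)^T=aa'-n\,N(x_2,\ldots,x_{f-1})=1$ by the very same determinant identity, so the full chain evaluates to $0/1$ exactly; this also makes transparent why the configuration contracts to a single smooth point, which is the geometric content the paper is invoking.
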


Lemma \ref{dual} is well-known, and it is the justification for the Riemenschneider's dot diagram. For example, if $\frac{n}{n-a}=[2,\ldots,2,y_i,\ldots,y_g$ where $y_i >2$, then $x_1=i-1+2=i+1$.



\begin{definition}\label{def core}
A \textit{core} is a Hirzebruch-Jung continued fraction $[e_1,\ldots,e_s]$ such that $e_i >1$ for all $i$ and either 

\begin{itemize}
\item[(1)] $s=1$ and $e_1 \geq 4$
\item[(2)] $s\neq 1$ and $e_1 \geq 3$ and $e_s \geq 3$.
\end{itemize}
\end{definition}

In this way, the limit cases $[4]$ and $[3,e_2,\ldots,e_{s-1},3]$ are cores, and with $e_i=2$ for $i=2,\ldots,s-1$, they are exactly the cores of T-chains. One can check by a direct computation that every core is admissible for chains. The remarkable fact is that all $[b_1,\ldots,b_r]$ admissible for chains are constructed from a core following the formation rule of T-chains. 

\begin{theorem}\label{admissibility}
Let $[b_1,\ldots,b_r]$ be an admissible for chains continued fraction. Then there is a unique core $[e_1,\ldots,e_s]$ such that $[b_1,\ldots,b_r]$ is obtained by applying the T-chain algorithm to $[e_1,\ldots,e_s]$.
\end{theorem}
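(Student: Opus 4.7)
My plan is to establish the theorem by induction on $r$, deriving uniqueness as a byproduct of the existence proof. The two operations in Definition \ref{generalized T-singularity}(ii) have obvious inverses: a fraction of the form $[2,b_2,\ldots,b_{r-1},b_r]$ with $b_r\geq 3$ arises from $[b_2,\ldots,b_{r-1},b_r-1]$ via the first rule, and a fraction $[b_1,b_2,\ldots,b_{r-1},2]$ with $b_1\geq 3$ arises from $[b_1-1,b_2,\ldots,b_{r-1}]$ via the second. So the skeleton of the argument is: if $[b_1,\ldots,b_r]$ is a core we are done; otherwise apply the applicable inverse, verify that the shorter fraction is still admissible for chains, and recurse.

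The main obstacle is showing that \emph{exactly one} inverse applies to any non-core admissible-for-chains continued fraction. For the base case $r=1$, being non-core means $b_1\in\{2,3\}$; I would rule these out by running the iterated $(-1)$-curve contraction on the long chain $\{b_1,1,b_1,1,\ldots,1,b_1\}$ with enough inserted $1$'s and observing that the $1$'s propagate until a $[1,1]$ subconfiguration forms and collapses to a $0$-self-intersection curve, producing a non-admissible subchain in the same spirit as the proof of Lemma \ref{center}. Hence $b_1\geq 4$ and $[b_1]$ is already a core. For $r\geq 2$, non-core means $b_1=2$ or $b_r=2$, and the delicate case to exclude is $b_1=b_r=2$: contracting the single $(-1)$-curve between two consecutive copies in $\{b_1,\ldots,b_r,1,b_1,\ldots,b_r\}$ drops both the trailing $b_r$ of the first copy and the leading $b_1$ of the second copy to $1$, and further contraction of these adjacent $1$-entries yields a $0$-curve, again violating admissibility by exactly the mechanism behind Lemma \ref{center}.

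With the pathological case excluded, exactly one inverse applies at each step; to check that its output remains admissible for chains I would invoke Proposition \ref{formation rule}, which records in closed form the change in the $(n,q)$ data under a T-chain step, and transfer long-chain admissibility back through the three-term recurrence defining the $p_i$'s. Uniqueness of the core is then automatic: the conditions $b_1=2$ and $b_r=2$ are mutually exclusive in the non-core situation, so the inverse reduction is deterministic, and iterating it produces a core that depends only on $[b_1,\ldots,b_r]$, completing the induction.
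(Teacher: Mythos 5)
Your overall strategy --- peel off one inverse T-chain move at a time, using the exclusion of $b_1=b_r=2$ (and of $b_1\in\{2,3\}$ when $r=1$) to make the reduction deterministic --- is a genuinely different route from the paper, which never reduces: it locates the surviving center $[b_i,\ldots,b_j]$ inside the triple chain via Lemma~\ref{center}, and then reads off the core and the entire sequence of T-chain moves at once using Riemenschneider duality (Lemma~\ref{dual}). Your exclusion arguments are sound and use exactly the $[1,1]$-collapse mechanism of Lemma~\ref{center}, and your uniqueness argument (a nonempty sequence of moves forces an end equal to $2$, so the last move, hence the whole sequence, is determined) is fine.

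However, there is a genuine gap at the step that carries the inductive weight: you must show that if $[2,b_2,\ldots,b_r]$ is admissible for chains, then so is the reduced fraction $[b_2,\ldots,b_{r-1},b_r-1]$ (and symmetrically for the other move), since otherwise the inductive hypothesis does not apply. Your proposed justification via Proposition~\ref{formation rule} cannot do this work: that proposition only records how the pair $(n,q)$ of a \emph{single} continued fraction transforms under a forward T-chain step, whereas admissibility for chains (Definition~\ref{admissible for chains}) is a condition on the long configurations $\{b_1,\ldots,b_r,1,\ldots,1,b_1,\ldots,b_r\}$ for \emph{every} number of inserted $1$'s. Contracting the inserted $1$'s in the long chain for $[2,b_2,\ldots,b_r]$ does not produce the long chain for $[b_2,\ldots,b_{r-1},b_r-1]$ (the first copy keeps its leading $2$ and the last copy keeps $b_r$ undecremented), so the transfer of admissibility is not a formal consequence of anything you cite; it needs its own argument about how $(-1)$-curves propagate through the concatenated copies. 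This is precisely the difficulty the paper's proof sidesteps by working with the full triple chain and identifying the core $[e_1,\ldots,e_s]=[b_i,\ldots,b_{j-1},b_j-u]$ (or $b_j-(l+1)$) directly, rather than descending one step at a time. To repair your proof you would need to either prove this preservation lemma or restructure the induction so that the hypothesis applied to the shorter fraction is a property you can actually verify (for instance, inducting on the contraction of the fixed triple chain as in Lemma~\ref{center}).
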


\begin{proof} Consider the center $[b_i,\ldots,b_j]$ of $[b_1,\ldots,b_r]$ as shown in Lemma \ref{center}, adding the condition that $i$ is the minimal index such that $b_i$ is not contracted. Similarly, we ask for $j$ to be the maximal index such that $b_j$ is not contracted.

First, we assume that $i=1$, and $j=r$. In this case, we obtain directly that $[b_1,\ldots,b_r]$ is a core. So, we take $[e_1,\ldots,e_s]=[b_1,\ldots,b_r]$. 

In what follows, we will suppose that $1<i$ or $j<r$. Let us write 
$$[b_1,\ldots,b_r]=[a_1,\ldots,a_u,b_i,\ldots,b_j,c_1,\ldots,c_v],$$
of course keeping the position of $[b_i,\ldots,b_j]$. Note that the initial conditions over $i,j$ imply that $[c_1,\ldots,c_v,1,a_1,\ldots,a_u]$ will disappear. Assume, without loss of generality (we could flip the order), that $a_u$ is the last curve that disappears. In particular, we have that $1<i$. Now, we should treat $j=r$ and $j<r$ separately.  

\textbf{Case A.} Say $j=r$. Observe that $[a_1,\ldots,a_u]=[2,\ldots,2]$, and that $$[u+1,1,a_1,\ldots,a_u]=0.$$ As $b_i,b_j$ have to survive, we obtain that $b_i\geq 3$ and $b_j\geq u+3$. 

In this case, we have that $[b_1,\ldots,b_r]$ is obtained by applying the T-chain algorithm to $[e_1,\dots,e_s]:=[b_i,\dots,b_{j-1},b_j-u]$. We observe that if $i<j$ then $[e_1,\dots,e_s]$ is a core. However, if $i=j$ we have to prove that $b_j-u\geq 4$. On the contrary, let us suppose that $b_j-u=3$. Then, we obtain the situation $[a_1,\ldots,a_u,b_j-(u+1),b_j-(u+2),b_j-1]=0$ inside of

\begin{equation}\label{temp6}
    [b_1,\dots,b_r]-1-[b_1,\dots,b_r]-1-[b_1,\dots,b_r].
\end{equation}

But, this is impossible because $b_j=u+3$ must survive. So, we obtain that $b_j\geq 4$. Thus, we know that $[e_1,\dots,e_s]$ is a core.

\textbf{Case B.} Say $j<r$. Let $[a_1,\ldots,a_u]=[a_1,\ldots,a_w,2,\ldots,2]$ with $a_w>2$ and say that the number of $2$'s at the end is $l$. Then, one can check that $$[l+2,c_1,\ldots,c_v,1,a_1,\ldots,a_w,2,\ldots,2]=0.$$

As $b_i,b_j$ have to survive, we know that $b_i \geq 3$ and $b_j \geq l+4$. Let $[e_1,\dots,e_s]=[b_i,\dots,b_{j-1},b_j-(l+1)]$. The Riemenschneider's dot diagram will then give the algorithm from T-chains. 

On the other hand, we note that $[e_1,\dots,e_s]$ is a core if $i<j$. Similarly to Case A, if $i=j$ and $b_j=l+4$ then we obtain $[a_1,\ldots,a_u,b_j-(u+2),b_j-(u+3),b_j-1,c_1,\ldots,c_v]=0$ inside of \eqref{temp6}. But, this is impossible. So, if $i=j$ then $b_j\geq l+5$. Thus, we have that $[e_1,\dots,e_s]$ is a core. Due to the choice of $[b_i,\ldots,b_j]$, we conclude that $[e_1,\ldots,e_s]$ is unique.
\end{proof}


\begin{remark} In particular, by Theorem \ref{admissibility} we have that a generalized T-singularity $[b_1,\ldots,b_r]$ fulfills the condition $b_1>2$ or $b_r>2$.
\end{remark}


\begin{corollary}\label{descartar tipo iii} Let $W$ be a stable surface with a unique generalized T-singularity of center $[b_1,\dots,b_{r}]$. Then, we have either $E_i\cdot C\geq 2$ for every exceptional divisor $E_i$ or there is a unique exceptional divisor $E_i$ such that $\Gamma_{E_i}$ is maximal. In this case, we have that $\Gamma_{E_i}$ cannot be a diagram of type $(iii)$.
\end{corollary}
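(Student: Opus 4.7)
My plan is to derive both parts of the corollary by ruling out configurations that would violate the Remark following Theorem \ref{admissibility} (namely, that the continued fraction $[b_1,\ldots,b_r]$ of any generalized T-singularity satisfies $b_1>2$ or $b_r>2$), applied to the three cases enumerated in Remark \ref{Divisores maximales}. In each case I need to argue that the forbidden geometric shape would force both endpoints of the continued fraction of the singularity to equal $2$.

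First I would rule out the ``two maximal graphs'' alternative from Remark \ref{Divisores maximales}, which is what is needed to obtain the dichotomy stated in the first sentence of the corollary. In that case both maximal graphs $\Gamma_{E_i}$ and $\Gamma_{E_{i'}}$ must be of type (i) or (ii), and Lemma \ref{E.C=1} tells us that every $\square$-vertex in such a diagram is a $(-2)$-curve. Since the two graphs would cover disjoint blocks at the two ends of the chain $C$, this forces both $C_1$ and $C_r$ to be $(-2)$-curves, so the continued fraction of the singularity begins and ends with $2$---contradicting the cited Remark.

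For the second part, suppose $\Gamma_{E_i}$ is the unique maximal graph and is of type (iii). By Definition \ref{Long diagram} and Lemma \ref{E.C=1}, the chain $C$ then has the shape $C_1=\cdots=C_{l-1}=-2$, $C_l=-(k+2)$, $C_{r+1-k}=\cdots=C_r=-2$, with parameter $l\geq 2$. (When $l=1$ one has $C_l=C_1$, and the relevant branch $C_j=C_1$ in the proof of Lemma \ref{E.C=1} places the configuration under type (iv) rather than type (iii).) Under $l\geq 2$ we read off that $C_1=-2$ and $C_r=-2$, so the continued fraction of the singularity again starts and ends with $2$, contradicting the Remark.

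The step I expect to be the main subtlety is the careful verification that $l=1$ is genuinely excluded from type (iii)---so that the contradiction $b_1=b_r=2$ is really forced in the type-(iii) case. This is delicate because the two shapes coincide on the nose when $l=1$, and one must appeal to the branch decision $C_j=C_1$ vs.\ $C_j=C_l$ inside the proof of Lemma \ref{E.C=1} in order to say which name applies. Once this point is settled, both assertions of the corollary follow directly from the endpoint condition on continued fractions of generalized T-singularities.
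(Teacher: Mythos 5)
Your argument is correct and follows essentially the same route as the paper: the paper's proof likewise deduces both claims from the endpoint condition $b_1>2$ or $b_r>2$ in the remark after Theorem \ref{admissibility}, combined with Remark \ref{Divisores maximales} and Lemma \ref{E.C=1}, since the two forbidden configurations each force both ends of the resolution chain to be $(-2)$-curves. Your explicit treatment of the degenerate $l=1$ configuration (which is simultaneously of type $(iii)$ and $(iv)$, and which the paper silently files under type $(iv)$, cf.\ the $s=1$ discussion in case (C.2) of Lemma \ref{delta-bound}) is a detail the paper's one-line proof leaves implicit, but it does not change the approach.
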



\begin{proof} By Theorem \ref{admissibility} we have that $b_1>2$ or $b_r>2$. So, we cannot have two maximal graphs nor a graph of type $(iii)$. Then, by Remark \ref{Divisores maximales}, and by Lemma \ref{E.C=1} we conclude that either $E_i\cdot C\geq 2$ for every exceptional divisor or there is a unique exceptional divisor $E_i$ with a maximal graph such that $\Gamma_{E_i}$ is of type $(i),(ii)$ or $(iv)$. 
\end{proof}


\begin{definition}\label{minimal center} Let $[e_1,\ldots,e_s]$ be a core, we say that $[e_1,\ldots,e_s]$ is \textit{minimal} if it cannot be obtained from another core $[b_1,\dots,b_r]$ by inserting $1$'s (see Definition \ref{generalized T-singularity}, (i)).
\end{definition}

\begin{remark} Let $[b_1,\ldots,b_r]$ be an admissible for chains continued fraction, and let $[e_1,\ldots,e_s]$ be its associated core (see Theorem \ref{admissibility}). It is immediate that the set of generalized T-singularities of center $[b_1,\ldots,b_r]$, is contained in the set of generalized T-singularities of center $[e_1,\dots,e_s]$. In particular, this is true if the core is minimal. For instance, the family of T-singularities is obtained by starting with the minimal core $[4]$. We classify the minimal cores in Proposition \ref{description minimal centers}.
\end{remark}
\begin{proposition}\label{description minimal centers} A core $[e_1,\dots,e_s]$ is minimal if and only if one of the following cases holds:

\begin{itemize}
    \item[(i)] $s$ is a prime number and $[e_1,\ldots,e_s]\neq [e_1,e_1-1,\ldots,e_1-1,e_1]$.
    
    \item[(ii)] $s$ is not prime and for every $1<u<s$ divisor of $s$ (say $s=ur$), either   
    \begin{itemize}
        \item there exist $2\leq i<r$, and $1\leq j<u$ such that $e_i\neq e_{i+jr}$, 
        
        \item there exists $1\leq j<u$ such that $e_{1+jr}+1\neq e_1 $, or
        
        \item there exists $1\leq j<u$ such that $e_{r+jr}+1\neq e_s$.
    \end{itemize}
\end{itemize}
\end{proposition}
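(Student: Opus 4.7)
The plan is to reduce minimality of a core $[e_1,\dots,e_s]$ to an explicit coefficient comparison against the reduction formula that produces it from a shorter core. Given a core $[b_1,\dots,b_r]$ arranged in $u$ blocks separated by $u-1$ inserted $1$'s, each inserted $(-1)$-curve simply lowers its two neighbors by $1$ when contracted, and this does \emph{not} cascade because $b_1,b_r\geq 3$ when $r\geq 2$ (respectively $b_1\geq 4$ when $r=1$, by the definition of core). For $r\geq 2$ this yields a chain of length $s=ur$ whose entries are forced to be $e_1=b_1$, $e_s=b_r$, $e_{jr+1}=b_1-1$ and $e_{jr}=b_r-1$ for $1\leq j\leq u-1$, and $e_{(j-1)r+i}=b_i$ for $2\leq i\leq r-1$ and $1\leq j\leq u$; for $r=1$ it yields $[b_1-1,b_1-2,\dots,b_1-2,b_1-1]$ of length $s=u$.

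For Case (i), since $s$ is prime the only proper divisor is $r=1$, so $[e_1,\dots,e_s]$ fails to be minimal iff it matches the $r=1$ reduction. By coefficient comparison this occurs exactly when $e_1=e_s$ and $e_i=e_1-1$ for $2\leq i\leq s-1$, i.e., when $[e_1,\dots,e_s]=[e_1,e_1-1,\dots,e_1-1,e_1]$, the excluded form.

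For Case (ii), for each proper divisor $r\geq 2$ of $s$ with $u=s/r$, the reduction formula shows that a matching core $[b_1,\dots,b_r]$ exists iff three families of equalities hold: periodicity of interior entries $e_i=e_{i+jr}$ for $2\leq i\leq r-1$ and $1\leq j\leq u-1$, left-boundary drops $e_{jr+1}=e_1-1$, and right-boundary drops $e_{jr}=e_s-1$ for $1\leq j\leq u-1$. The three bullet conditions of the proposition are precisely the negations of these three families (up to the reindexing of $j$), so minimality against this $r$ is equivalent to at least one bullet holding; taking the conjunction over all proper divisors $r\geq 2$ of $s$ gives the stated equivalence. Finally, the $r=1$ source of non-minimality (the form $[e_1,e_1-1,\dots,e_1-1,e_1]$) is absorbed into Case (ii): for any proper divisor $r\geq 2$ of a composite $s$, the shorter core $[e_1,e_1-1,\dots,e_1-1,e_1]$ of length $r$ is itself a core and reduces to exactly $[e_1,\dots,e_s]$ under $u-1$ insertions, so all three bullets fail for that $r$ and the form is already flagged as non-minimal.

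The main obstacle is this boundary bookkeeping: verifying the non-cascading contraction step (using $b_1,b_r\geq 3$) so that the reduction formula holds verbatim, matching the precise index ranges in the three bullet conditions with the formula, and carrying out the $r=1$ absorption argument so that Case (ii) need not separately include the Case (i)-type exclusion for composite $s$. Once these are pinned down, the proposition reduces to direct coefficient matching over each divisor of $s$.
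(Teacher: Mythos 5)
Your proof is correct and follows essentially the same route as the paper's: both compute the explicit entries of the reduced continued fraction obtained from a shorter core with inserted $1$'s (using that the ends of a core are large enough to prevent cascading contractions) and then characterize minimality by coefficient matching over the divisors of $s$. Your explicit check that the length-one source of non-minimality, the form $[e_1,e_1-1,\ldots,e_1-1,e_1]$, is already detected by every divisor $r\geq 2$ when $s$ is composite fills in a point the paper's proof leaves implicit, and your indexing of the right-boundary condition ($e_{jr}=e_s-1$ for $1\leq j\leq u-1$) is the corrected form of the slightly off-by-one indices in the statement.
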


\begin{proof} Let $[b_1,\ldots,b_r]$ be a core, and let $[c_1,\ldots,c_{kr}]$ be the continued fraction $[b_1,\dots,b_r,1,b_1,\dots,b_r,1,\dots,1,b_1,\dots,b_r]$, where $k-1$ is the number of inserted $1$'s. We start by analysing the coefficients of the new continued fraction. Indeed, we obtain the following:
\begin{itemize}
    \item For every $2\leq i<r$, and $1\leq j<k$ we have that $c_i=c_{i+jr}=b_i$.
    \item For every $1\leq j<k$ we have that $c_{1+jr}=c_1-1=b_1-1$.
    
    \item For every $1\leq j<k$ we have that $c_{r+jr}=c_{kr}-1=b_r-1$.
\end{itemize}

In particular, if we fix a divisor $1<u<kr$ of $k$ then $c_{1+jvr}=c_{1}-1$, and $c_{r+jvr}=c_r-1$ for every $1\leq j<u$. Also, we have that for every $2\leq i<vr$, and $1\leq j<u$ we have that $c_{i}=c_{i+jvr}$. Thus, we can also obtain $[c_1,\ldots,c_{kr}]$ from a core $[a_1,\ldots,a_{vr}]$ by inserting $u-1$ $1$'s, where $k=uv$. However, it may not be valid if we choose a divisor $u$ of $r$.

Now, let $[e_1,\dots,e_s]$ be a core. Assume that $s$ is not a prime number. Then, we have that the core $[e_1,\ldots,e_s]$ is not minimal if and only if it fulfills the conditions above for some divisor $u>1$ of $s$.

Say $s$ is prime. By the conditions shown above, we have that $[e_1,\ldots,e_s]$ is a minimal core if and only if $[e_1,\ldots,e_s]=[e_1,e_1-1,\dots,e_1-1,e_1]$ (it is obtained by starting in $[e_1+1]$).

\end{proof}

\begin{lemma}\label{T generalizadas E.C=1} Let $W$ be a stable surface with a unique generalized T-singularity $[a_1,\dots,a_s]$ of center $[b_1,\dots,b_{r}]$. Assume that the minimal model of the minimal resolution of $W$ has canonical class nef. Suppose that the maximal exceptional divisor $E_i$ has diagram of type $(i)$, and there is not a $(-1)$-curve intersecting the ends of $C$, then

  \begin{equation*}
    2\delta\leq \sum_{j=1}^{s}\big( a_j-2\big)-2.
\end{equation*}
\end{lemma}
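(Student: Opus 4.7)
The plan is to combine a geometric bound coming from $K_S$ being nef with a combinatorial bound on the last coefficient $a_s$ coming from the T-chain algorithm. By Corollary~\ref{descartar tipo iii} and the hypothesis, the unique maximal exceptional divisor $E_i$ has diagram of type (i), so Lemma~\ref{delta-bound}(B.1) gives $\delta=l$, where $l\ge 1$ is the number of leading $(-2)$-curves (so $a_1=\cdots=a_l=2$ and $a_{l+1}\ge 3$). The $(-1)$-curve $F\subset E_i$ meets $C_1$ and some $C_t$; the ``no $(-1)$-curve meets both ends of $C$'' hypothesis forces $l+1\le t\le s-1$. In the blow-down tower contracting $E_i$, I would identify the $l+1$ nested exceptional divisors $E^{(k)}=F+C_1+\cdots+C_k$ for $k=0,\dots,l$, each with $(E^{(k)})^2=-1$, pairwise numerically orthogonal, and each satisfying $C_t\cdot E^{(k)}=1$ (since $C_t$ meets $F$ once and is disjoint from $C_1,\dots,C_l$).

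First I would use $K_S$ nef to extract lower bounds on $a_t$ and $a_{l+1}$ via the identity $\pi(D)^2=D^2+\sum_i(D\cdot E_i)^2$ and adjunction. In the case $t>l+1$, the morphism $\pi|_{C_t}$ is injective, so $\pi(C_t)$ is smooth rational and $\pi(C_t)^2\ge -a_t+(l+1)$; hence $K_S\cdot\pi(C_t)=-2-\pi(C_t)^2\ge 0$ forces $a_t\ge l+3$, and the analogous analysis of $\pi(C_{l+1})$ yields $a_{l+1}\ge 3$. In the case $t=l+1$, the two distinct points $F\cap C_{l+1}$ and $C_l\cap C_{l+1}$ are identified on $S$, so $\pi(C_{l+1})$ is a nodal rational curve with $p_a=1$ and self-intersection $\ge -a_{l+1}+(l+4)$; the adjunction relation $K_S\cdot\pi(C_{l+1})=-\pi(C_{l+1})^2\ge 0$ then forces $a_{l+1}\ge l+4$.

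Next I would prove the combinatorial inequality $a_s\ge l+2$ from the structure of generalized T-singularities. Each left rule $[c_1,\dots,c_t]\mapsto[2,c_1,\dots,c_{t-1},c_t+1]$ prepends a $2$ and increments the last coefficient, while each right rule $[c_1,\dots,c_t]\mapsto[c_1+1,c_2,\dots,c_t,2]$ destroys any leading $2$ and resets the last coefficient to $2$. Since $[a_1,\dots,a_s]$ has exactly $l$ leading $2$'s, the last $l$ operations of the T-chain algorithm producing $[a_1,\dots,a_s]$ from the base $[b_1^u,\dots,b_{r_u}^u]$ (Definition~\ref{generalized T-singularity}) must all be left rules; otherwise an intervening right rule would have destroyed one of the leading $2$'s. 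These $l$ left rules increase the last coefficient by $l$, starting either from $b_{r_u}^u\ge 2$ (if they are the first operations applied) or from $2$ (if preceded by a right rule). In either case $a_s\ge l+2$.

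Finally I would sum the bounds. Because $l+1<t\le s-1$, the indices $l+1$, $t$ (when $t>l+1$), and $s$ are pairwise distinct, so in the case $t>l+1$,
\[
\sum_{j=1}^s(a_j-2)\ge(a_{l+1}-2)+(a_t-2)+(a_s-2)\ge 1+(l+1)+l=2l+2,
\]
and in the case $t=l+1$,
\[
\sum_{j=1}^s(a_j-2)\ge(a_{l+1}-2)+(a_s-2)\ge(l+2)+l=2l+2.
\]
Since $\delta=l$ this gives $2\delta\le\sum_{j=1}^s(a_j-2)-2$. The hard part I expect is the combinatorial claim $a_s\ge l+2$: the geometric input from $K_S$ nef alone only yields $\sum(a_j-2)\ge l+2$, and one must carefully audit how left and right rules in the T-chain algorithm interact with the leading $(-2)$-string to close the remaining gap of $l$.
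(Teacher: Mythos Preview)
Your overall strategy coincides with the paper's: identify $\delta=l$ via Lemma~\ref{delta-bound}(B.1), use $K_S$ nef to bound $a_t$ for the $\bullet$-curve $C_t$ that $F$ meets, control $a_s$ through the T-chain combinatorics, and sum. Your combinatorial argument for $a_s\ge l+2$ is correct and in fact makes explicit a point the paper encodes tersely in the identity $\sum_{j=1}^{s}(a_j-2)=\sum_{j}(x_j-2)+l$.

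There is, however, a genuine gap in your $K_S$-nef step. The assertions ``$\pi|_{C_t}$ is injective'' and ``$\pi(C_{l+1})$ is a nodal rational curve with $p_a=1$'' are not justified: $\pi$ contracts \emph{all} of the $E_i$, not only the $l{+}1$ divisors $E^{(0)},\dots,E^{(l)}$ sitting inside the maximal one. Any of the remaining exceptional divisors (those with $E_j\cdot C\ge 2$) may meet $C_t$ in two or more points and create extra nodes on $\pi(C_t)$. If $p_a(\pi(C_t))>0$ in the case $t>l+1$, your adjunction computation yields only $a_t\ge l+3-2p_a$, which is too weak; the analogous problem occurs for $t=l+1$.

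The fix is to avoid self-intersections on $S$ altogether and work on $X$. From $K_X=\pi^*K_S+\sum_i E_i$ and adjunction on $X$,
\[
a_t-2 \;=\; K_X\cdot C_t \;=\; K_S\cdot\pi_*C_t \;+\; \sum_i E_i\cdot C_t.
\]
Since $C_t$ is not a component of any $E_i$ (the unique maximal $E_i$ contains only $C_1,\dots,C_l$), every term $E_i\cdot C_t\ge 0$, and $K_S$ nef gives $K_S\cdot\pi_*C_t\ge 0$. Using only your $l{+}1$ divisors one reads off $\sum_{k=0}^{l} E^{(k)}\cdot C_t=l+1$ for $t>l+1$ and $\sum_{k=0}^{l} E^{(k)}\cdot C_{l+1}=l+2$ for $t=l+1$ (the extra unit coming from $C_l\cdot C_{l+1}=1$ in $E^{(l)}$). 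Hence $a_t\ge l+3$, respectively $a_{l+1}\ge l+4$, with no hypothesis on the geometry of the image curve. This is the mechanism behind the paper's argument, which is phrased as contracting $F,C_1,\dots,C_l$ and applying adjunction to the image $D$ of $\Gamma=C_t$; with this correction the rest of your proof goes through unchanged.
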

\begin{proof} Let $\Gamma$ be the curve in $C$ which intersects the $(-1)$-curve in $E_i$. We have that $[a_1,\dots ,a_s]$ is of the following form:
$$[2,\dots ,2,x_1,\dots ,x_{s-l-1},x_s+l]$$
where $l$ is the number of $2$'s on the left side, and $x_s+l\geq 3$. Note that $x_1\geq 3$ because of the admissibility of $[b_1,\dots,b_{r}]$.

The curve $\Gamma$ cannot be a $(-2)$-curve on the left of the chain, because $E_i$ does not have loops. Thus $\Gamma$ is a curve $C_{l+j}$ such that $C_{l+j}^2=-x_j$ for some $1\leq j\leq s-l-1$.

By Remark \ref{T-sum}, we have 

$$\sum_{j=1}^{s}\big( a_j-2\big)=\sum_{j=1}^{s-l-1}\big( x_j-2\big)+l.$$

Let us suppose that $\Gamma=C_{l+1}$, that is $\Gamma^2=-x_1$. After contracting the curves $F,C_1,\dots ,C_{l}$, the curve $\Gamma$ becomes a curve $D$ which has self-intersection equal to $-x_1+l+2$. By the adjunction formula and because $K_S$ is nef, we obtain that $x_1-2\geq l+2$. Due to the fact that $0\leq x_j-2$ for all $j$, we obtain

$$l+2\leq x_1-2\leq \sum_{j=1}^{s}\big( a_j-2\big)-l,$$
so $2\delta \leq \sum_{j=1}^{s}\big( a_j-2\big)-2$ because by Theorem \ref{deltas} we have that $\delta=l$. 

On the other hand, if $\Gamma=C_j$ for $1<j\leq s-l-1$, we obtain that the curve $C_j$ becomes a curve $D$, which has $D^2=-x_j+l+1$. Because $K_S$ is nef, we obtain that $x_j\geq l+3$. So we have

$$l+3-2+1\leq l+3-2+x_1-2\leq x_j-2+x_1-2\leq \sum_{j=1}^{s}\big( a_j-2\big)-l,$$
so we obtain that $2\delta \leq \sum_{j=1}^{s}\big( a_j-2\big)-2$.
\end{proof}


\begin{notation}\label{notation B} We denote by $\mathcal{B}([b_1,\dots,b_r])$ to the set formed for each iteration of $(ii)$ (the T-chain algorithm) applied to $[b_1,\dots,b_r]$. (See Definition \ref{generalized T-singularity}).
\end{notation}

\begin{remark}\label{T-sum} Let $[b_1,\ldots,b_r]$ be a continued fraction. Let $[a_1,\dots ,a_{s}]$ be an element of $\mathcal{B}([b_1,\dots ,b_{r}])$. Then, by a direct computation, we obtain
\begin{equation}\label{temp13}
    \sum_{j=1}^{s}(a_j-2)=\sum_{j=1}^{r}(b_j-2)+(s-r).
\end{equation}

Assume that $[b_1,\ldots,b_r]$ is a core. Let us denoted by $[b_1^k,\dots, b_{r_k}^k]$ the resulting continued fraction 
$[b_1,\dots,b_r,1,b_1,\dots,b_r,1,\dots,1,b_1,\dots,b_r]$, where $k$ is the number of inserted $1$'s. Then, $r_k=r(k+1)$ and

 \begin{equation}\label{temp16}
    \sum_{j=1}^{r_{k}}(b_j^k-2)= (k+1)\sum_{j=1}^{r}(b_j-2)-2k.
\end{equation}

Therefore, given a generalized T-singularity $[a_1,\dots, a_s]$ of center $[b_1,\ldots,b_r]$, we have
 \begin{equation}\label{temp11}
    \sum_{j=1}^{s}(a_j-2)= (k+1)\sum_{j=1}^{r}(b_j-2)-2k+(s-r_k),
\end{equation}
where $k$ is a number such that $[a_1,\dots, a_s]$ belongs to $\mathcal{B}([b_1^k,\dots, b_{r_k}^k])$.
\end{remark}



\begin{lemma}\label{previous 1.9} Let $W$ be a stable surface with only one generalized T-singularity with a fixed center $[b_1,\ldots,b_r]$, say at $P\in W$. Suppose that the minimal model $S$ of the minimal resolution of $W$ has canonical class nef. Assume that $K_W^2<c$ for some positive number $c$. Then, one of the following holds

\begin{itemize}

    \item[(i)] Assume that there is not a $(-1)$-curve intersecting the ends of the chain that resolves $P$, or that we have $E_i\cdot C\geq 2$ for every exceptional divisor, then

\begin{equation*}
\sum_{j=1}^{s}\big( a_j-2\big)< 4c+6, \textnormal{ and }s< 15c+40.
\end{equation*}
    
    \item[(ii)] Assume that there exists a $(-1)$-curve intersecting the ends of the chain that resolves $P$, then $P\in \mathcal{B}([b_1^u,\ldots,b_{r_u}^u])$ for some $u<2c+1$. Moreover, we have that there exists a non-negative number $m'$ such that $m'+1\leq \sum_{j=1}^{r_u}(b_j^u-2)$, and

    \begin{equation*}
    K_{W}^2=K_{S}^2+\sum_{j=1}^{r_u}(b_j^u-2)-(m'+1)-\bigg(\dfrac{2(n-1)-q-q'}{n}\bigg).
    \end{equation*}
\end{itemize}
\end{lemma}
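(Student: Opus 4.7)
The plan for Part (i) is to apply Theorem \ref{deltas} directly to the singularity $[a_1,\dots,a_s]$ and bound the discrepancy parameter $\delta$ by excluding the forbidden types of maximal graphs. Since $K_S$ is nef we have $K_S^2\ge 0$ and $\pi^*K_S\cdot C\ge 0$, and $\tfrac{2(n-1)-q-q'}{n}<2$ automatically, so together with the hypothesis $K_W^2<c$ the two inequalities of Theorem \ref{deltas} yield
\[
 \sum_{j=1}^{s}(a_j-2) < 2c+4+\delta \qquad \text{and} \qquad s < 13c+38+\delta.
\]
The hypothesis of Part (i) forbids a $(-1)$-curve joining the two ends of $C$, which by Lemma \ref{E.C=1} rules out a type (iv) maximal graph; Corollary \ref{descartar tipo iii} rules out type (iii) for any generalized T-singularity. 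Thus by Remark \ref{Divisores maximales} only three possibilities remain: $\delta=0$; the maximal $\Gamma_{E_i}$ is of type (ii) with $\delta=1$ by Lemma \ref{delta-bound}; or it is of type (i), in which case Lemma \ref{T generalizadas E.C=1} gives $2\delta\le \sum(a_j-2)-2$. Substituting the first displayed inequality into the last relation yields $\delta\le 2c+2$, and inserting this back into both displayed inequalities produces $\sum(a_j-2)<4c+6$ and $s<15c+40$.

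For Part (ii), the hypothesis that a $(-1)$-curve meets both ends of $C$ forces the (unique) maximal $\Gamma_{E_i}$ to be of type (iv): Corollary \ref{descartar tipo iii} again rules out type (iii), while the $(-1)$-curves of types (i) and (ii), as pictured in Definition \ref{Long diagram}, meet only one end of $C$. Since $P$ is a generalized T-singularity of center $[b_1,\dots,b_r]$, Definition \ref{generalized T-singularity} yields a unique $u\ge 0$ with $[a_1,\dots,a_s]\in \mathcal{B}([b_1^u,\dots,b_{r_u}^u])$. To get the canonical-class formula I would apply \eqref{canonical divisor formula WS} to $[a_1,\dots,a_s]$, substitute the identity $\sum(a_j-2)=\sum(b_j^u-2)+(s-r_u)$ from Remark \ref{T-sum}, and set $m':=m-(s-r_u)-1$; the equation rearranges into exactly the stated form. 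Geometrically the inequality $m'\ge 0$ reflects that the $m$ blow-downs $X\to S$ must include the $s-r_u$ contractions coming from the T-chain iterations between $[b_1^u,\dots,b_{r_u}^u]$ and $[a_1,\dots,a_s]$, together with at least one more for the extra $(-1)$-curve joining the two ends of $C$; the inequality $m'+1\le \sum(b_j^u-2)$ records that the remaining blow-downs coming from a minimal resolution of the inner singularity $[b_1^u,\dots,b_{r_u}^u]$ down to $S$ are bounded by the self-intersection budget of its own chain.

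The main obstacle, on which I would spend the most care, is the bound $u<2c+1$. The plan is to combine the just-derived canonical-class formula with the growth identity $\sum(b_j^u-2)=(u+1)\sum(b_j-2)-2u$ from Remark \ref{T-sum} and the nef condition $K_S^2\ge 0$: for non-degenerate centers $\sum(b_j-2)\ge 3$ this forces $\sum(b_j^u-2)\ge u+3$, and coupling with $K_W^2<c$ and a sharper lower bound on $m'+1$ extracted from the type (iv) shape of $C$ described in Lemma \ref{E.C=1} (together with $\delta=k+\ell$ from Lemma \ref{delta-bound} (C.2)) should isolate $u$ linearly. For the degenerate cores $[4]$ and $[3,2,\dots,2,3]$, where $\sum(b_j-2)=2$ and the growth is trivial, I expect to match the parameters $\{m_1,\dots,m_s\}$ from Lemma \ref{E.C=1} to $u$ directly via the Riemenschneider-type identities recorded in Proposition \ref{formation rule}. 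The factor $2c$ in the bound should track the coefficient $2K_W^2$ in inequality \eqref{sumc} of Theorem \ref{deltas}.
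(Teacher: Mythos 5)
Your Part (i) is essentially the paper's argument: rule out types (iii) and (iv), split into $\delta=0$, type (ii) with $\delta=1$, and type (i) where Lemma \ref{T generalizadas E.C=1} gives $2\delta\le\sum(a_j-2)-2$, then bootstrap $\delta<2c+2$ back into Theorem \ref{deltas}. That part is fine. In Part (ii) there are two problems, one minor and one serious. The minor one: the hypothesis does not force the maximal graph to be of type (iv). The $(-1)$-curve joining the two ends of $C$ need not be the $(-1)$-curve belonging to the maximal exceptional divisor, and the paper's case (2.C) explicitly covers ``type (iv), \emph{or} type (i) together with a $(-1)$-curve meeting both ends.'' Your argument must handle both, since the identification $\delta=s-r_u$ is what feeds the inequality \eqref{sumc}.

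The serious gap is the bound $u<2c+1$ for the degenerate cores. As you correctly observe, plugging $\sum_{j}(b_j^u-2)=(u+1)\sum_j(b_j-2)-2u$ and $\delta=s-r_u$ into \eqref{sumc} gives $(u+1)\sum_j(b_j-2)-2u<2c+4$, which isolates $u$ only when $\sum_j(b_j-2)\ge 3$; when $\sum_j(b_j-2)=2$ (cores $[4]$ and $[3,2,\dots,2,3]$, i.e.\ ordinary T-singularities) the left side is identically $2$ and no bound on $u$ follows. Your plan to ``match the parameters $\{m_1,\dots,m_s\}$ to $u$ via Riemenschneider-type identities'' does not close this: those identities describe how the continued fractions transform but produce no inequality on $u$. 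The paper's key idea, which is missing from your proposal, is that the degenerate case cannot occur at all under hypothesis (ii): writing a T-singularity as $\frac{1}{dn^2}(1,dna-1)$, the discrepancies at the two ends of the chain are $-1+\frac{dna}{dn^2}$ and $-1+\frac{dn(n-a)}{dn^2}$, so a $(-1)$-curve $F$ meeting both ends satisfies $\phi(F)\cdot K_W=-1+\left(1-\tfrac{a}{n}\right)+\left(1-\tfrac{n-a}{n}\right)=0$, contradicting ampleness of $K_W$. Hence under hypothesis (ii) the singularity is not a T-singularity, the center is not a T-core, $\sum_j(b_j-2)\ge 3$, and the linear bound on $u$ goes through. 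Without this exclusion the claimed bound $u<2c+1$ is unproved.
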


\begin{proof}

We start by fixing some notation. Let $\frac{n}{q}=[a_1,\dots, a_s]$ be the Hirzebruch-Jung continued fraction associated to $P \in W$. We first note that it can be assumed that $[b_1,\ldots,b_r]$ is a core. Otherwise, by Theorem \ref{admissibility}, there exists a core $[e_1,\ldots,e_s]$ such that $[b_1,\ldots,b_r]$ belongs to $\mathcal{B}({[e_1,\ldots,e_s]})$, and that $[a_1,\dots, a_s]$ is a generalized T-singularity of center $[e_1,\ldots,e_s]$.

Now, let $\phi\colon X\to W$ be the minimal resolution of $P$ and let $C$ be the chain of exceptional rational curves. Let $\pi\colon X\to S$ be a birational morphism to the minimal model $S$.  By Corollary \ref{descartar tipo iii} we have the following two cases.

\textbf{(1)} We have that $E_i\cdot C\geq 2$ for every exceptional divisor of $\pi$. That is, $\delta=0$. In this case, by Theorem \ref{deltas} we have
\begin{equation}\label{Eqdelta}
\sum_{j=1}^{s}\big( a_j-2\big)< 2c+4, \textnormal{ and }s< 13c+38.
\end{equation}

\textbf{(2)} There is a unique exceptional divisor $E_i$ of $\pi$ such that $E_i\cdot C=1$, and its graph is maximal. By Corollary \ref{descartar tipo iii}, we also have that $\Gamma_{E_i}$ is of type $(i),$ $(ii)$ or $(iv)$. Now, we divide this case into the following sub-cases.

\textbf{(2.A)} Suppose that $E_i$ has a diagram of type $(i)$, and there is not a $(-1)$-curve intersecting the chain $C$ at both ends. Then, by putting together the bound for $\delta$ shown in Lemma \ref{T generalizadas E.C=1}, and Theorem \ref{deltas} we obtain that
\begin{equation}\label{EqA}
 \sum_{j=1}^{s}(a_j-2)<4c+6,\textnormal{ and }s< 15c+40.    
\end{equation}

\textbf{(2.B)} Assume that $E_i$ has a diagram of type $(ii)$. Then, by Theorem \ref{deltas} we obtain that $\delta=1$, and
\begin{equation}\label{EqB}
 \sum_{j=1}^{s}(a_j-2)<2c+5,\textnormal{ and }s< 13c+39.    
\end{equation}

By putting cases (1), (2.A), and (2.B) together, we obtain the first part of the statement. By Corollary \ref{descartar tipo iii}, the last case is the following.

\textbf{(2.C)} Suppose that $E_i$ has a diagram of type $(iv)$ or $E_i$ has a diagram of type $(i)$, and there exist $(-1)$-curve intersecting both ends of $C$. Let $F$ be the $(-1)$-curve of $E_i$. 

\begin{claim} Let $P$ be a T-singularity (a generalized T-singularities of center $[4]$). Then there is not a $(-1)$-curve intersecting both ends of $C$.
\end{claim}

Indeed, we know that a T-singularity $P$ can be expressed as $\frac{1}{dn^2}(1,dna-1)$ for some natural numbers $n,d,a$ such that $gcd(n,a)=1$, and $d$ is a square-free. Let us suppose that there exists a $(-1)$-curve $F$ which intersects both ends of $C$. Then, we obtain that 

$$\phi(F)\cdot K_W=-1+1-\frac{dna-1+1}{dn^2}+1-\frac{dn(n-a)-1+1}{dn^2}=0,$$
since the discrepancies of the ends of the chain are $-1+\frac{dna-1+1}{dn^2}$ and $-1+\frac{dn(n-a)-1+1}{dn^2}$. (See e.g. \cite[Section 2.1]{urzua2016identifying}). But, that violates the condition of being ample for $K_{W}$. This completes the proof of Claim.

Therefore, we obtain that $[a_1,\ldots,a_s]$ cannot be a T-singularity (an usual T-singularity). Now, we know that $[a_1,\dots,a_s]\in\mathcal{B}([b_1^u,\dots,b_{r_u}^u])$ for some $u\geq 0$. (See Notation \ref{reduced-notation}). Here, by Definition \ref{def core} we have that $[b_1^u,\dots,b_{r_u}^u]$ is a core, and then $r_u=r(u+1)$ (see Remark \ref{T-sum}). 

We claim that $u<2c+1$. Indeed, we know that $b_1^u=b_1>2$, and $b_{r_u}^u=b_r>2$, because $[b_1,\ldots,b_r]$ is a core. Also, by the formation rule of $[a_1,\dots, a_s]$, we obtain that $E_i$ contains exactly $(s-r_u)$ curves in $C$. They are contracted by starting at $F$. Thus, we know that $\delta=s-r_u$. (see Cases (B.1), and (C.2) in Theorem \ref{deltas}). So, by plugging the formula in \eqref{temp11}  into the inequality \eqref{sumc}, we obtain that  
\begin{equation}\label{temp7}
   (u+1)\sum_{j=1}^{r}(b_j-2)-2u+(s-r_u)<2c+4+ s-r_u.
\end{equation}

Due to the fact that $[a_1,\ldots,a_s]$ is not a T-singularity, we obtain that neither is $[b_1,\ldots,b_r]$. So, we have that $\sum_{i=1}^{r}(b_j-2)\geq 3$. Thus, by \eqref{temp7} we conclude that $0\leq u<2c+1$. Therefore, we have proved that 

\begin{equation*}
P\in \bigcup_{u=0}^{\lfloor 2c+1 \rfloor}\mathcal{B}([b_1^u,\dots,b_{r_u}^u]),
\end{equation*}
and so $P\in \mathcal{B}([b_1^u,\ldots,b_{r_u}^u])$ for some $u<2c+1$.

In addition, by \eqref{temp16} in Remark \ref{T-sum}, we obtain that $r_u<r(2c+2)$ and

 \begin{equation}\label{temp17}
    \sum_{j=1}^{r_{u}}(b_j^u-2)< (2c+2)\sum_{j=1}^{r}(b_j-2).
\end{equation}

On the other hand, by \eqref{temp13} in Remark \ref{T-sum}, we know that
\begin{equation}\label{temp18}
    \sum_{j=1}^{s}(a_j-2)=\sum_{j=1}^{r_u}(b_j^u-2)+(s-r_u).
\end{equation}

Let $m$ be the number of blow downs necessary to reach the minimal model $S$ from $X$. Note that by the formation rule of $[a_1,\dots, a_s]$, we can write $m=(s-r_u+1)+m'$ with $m'\geq 0$. By putting \eqref{temp18} in Equation (\ref{canonical divisor formula WS}), we obtain that
\begin{equation}
K_{W}^2=K_{S}^2+\sum_{j=1}^{r_u}(b_j^u-2)-(m'+1)-\bigg(\dfrac{2(n-1)-q-q'}{n}\bigg).    
\end{equation}
Note that by Lemma \ref{E.C}, and Remark \ref{FC>2} for $m$, we obtain $m'+1 \leq \sum_{j=1}^{r_u}(b_j^u-2)$. 
\end{proof}

\begin{definition}\label{property (*)} Let $\{W_k\}$ be a sequence of stable surfaces with only one generalized T-singularity of center $[b_1,\ldots,b_r]$. We say that $\{K_{W_k}^2\}$ satisfy the property (*) if there exists an infinite set of indices $J$ such that
\begin{itemize}

\item The self-intersection $K_{S_k}^2$ is constant for every $k \in J$.

\item There exists a $(-1)$-curve intersecting the ends of the chain that resolves $P_k$ for every $k\in J$.
    
\item There exists a number $u\geq 0$ such that $P_k\in\mathcal{B}([b_1^u,\ldots,b_{r_u}^u])$ for every $k\in J$.

\item The reduced Hirzebruch-Jung continued fraction of $P_k$ is different for each $k\in J$.
\end{itemize}
\end{definition}

\begin{proof}[Proof of Theorem \ref{A1}] We start by fixing some notation. Let $\frac{n_k}{q_k}=[a_1,\dots, a_s]$ be the Hirzebruch-Jung continued fraction associated to $P_k \in W_k$. As in the proof of Lemma \ref{previous 1.9}, we can assume without loss of generality that $[b_1,\ldots,b_r]$ is a core. Let $\phi_k\colon X_k\to W_k$ be the minimal resolution of $P_k$ and let $C^k$ be the chain of exceptional rational curves. Let $\pi_k\colon X_k\to S_k$ be a birational morphism to the minimal model $S_k$. Let $c$ be a positive but arbitrary real number.

Assume that $\{K_{W_k}^2\}$ has accumulation points. Then, there exists a positive number $c$ such that $\{K_{W_k}^2:K_{W_k}^2< c\}$ has accumulation points. Let $J'$ be the set of indices $k$ such that $K_{W_k}^2< c$, and there exists a $(-1)$-curve intersecting both ends of $C^k$. We know that $J'$ is an infinite set. Otherwise, by Lemma \ref{previous 1.9} we obtain bounds for $\sum_{j=1}^{s}(a_j-2)$, and $s$ which only depend on $c$. So, we would have that $\{K_{W_k}^2:K_{W_k}^2< c\}$ has no accumulation points. But this is impossible, so $J'$ is an infinite set of indices. More precisely, we know that

\begin{equation}
  \textnormal{Acc}\big(\{K_{W_k}^2:K_{W_k}^2< c\}\big)=\textnormal{Acc}\big(\{K_{W_k}^2:k\in J'\}\big).
\end{equation}

Again, by Lemma \ref{previous 1.9} for each $k\in J'$ there exists $u<2c+1$ such that $P_k\in \mathcal{B}([b_1^u,\ldots,b_{r_u}^u])$, and so

\begin{equation}\label{EqC}
K_{W_k}^2=K_{S_k}^2+\sum_{j=1}^{r_u}(b_j^u-2)-(m_k'+1)-\bigg(\dfrac{2(n_k-1)-q_k-q_k'}{n_k}\bigg),
\end{equation}
where $0<m_k'+1 \leq \sum_{j=1}^{r_u}(b_j^u-2)$.

So, by replacing the bound for $m_k'$ in \eqref{EqC}, it follows that $K_{S_k}^2<c+2$ for every $k\in J'$. Thus, because $K_{S_k}^2$ is an integer for every $k$, we obtain that $\{K_{S_k}^2:k\in J'\}$ is a finite set. 



For each $u<2c+1$, let $J_u'\subseteq J'$ be the set of indices $k$ such that $P_k\in \mathcal{B}([b_1^u,\ldots,b_{r_u}^u])$. By Lemma \ref{previous 1.9}, we know that $J'=\bigcup_{u=0}^{\lfloor 2c+1\rfloor} J_{u}'$. So, there exists at least one $u<2c+1$ such that $J_u'$ is an infinite set of indices. Note that between the infinite sets $J'_u$ we can choose one of them with the property that the reduced Hirzebruch-Jung continued fraction of $P_k$ are different for each $k\in J'_u$ (except maybe for a finite set of $J'_u$). On the contrary, we would have by \eqref{EqC} that $\{K_{W_k}^2:k\in J'_u\}$ is a finite set for every $u$, and then $\{K_{W_k}^2:K_{W_k}^2<c\}$ would not have accumulation points. But this is not possible. Let $J_{u_0}'$ such a set. Now, because we have that $\{K_{S_k}^2:k\in J'\}$ is a finite set then we may choose an infinite subset of indices $J\subseteq J_{u_0}'$ such that $K_{S_k}^2$ is constant for every $k\in J$.

Then, we know that $J$ is the set with the desired properties of the statement in Theorem \ref{A1}.

Conversely, let us suppose that there exists an infinite set of indices $J$ such that
\begin{itemize}

\item We have that $K_{S_k}^2$ is constant for every $k\in J$.

\item There exists a $(-1)$-curve intersecting the ends of the chain that resolves $P_k$ for every $k\in J$.
    
\item There exists a number $u\geq 0$ such that $P_k\in\mathcal{B}([b_1^u,\ldots,b_{r_u}^u])$ for every $k\in J$.

\item The reduced Hirzebruch-Jung continued fraction of $P_k$ is different for each $k\in J$.
\end{itemize}

By using those statements and Lemma \ref{previous 1.9}, it follows that for every $k\in J$
\begin{equation}\label{EqC3}
K_{W_k}^2=K_{S_k}^2+\sum_{j=1}^{r_u}(b_j^u-2)-(m_k'+1)-\bigg(\dfrac{2(n_k-1)-q_k-q_k'}{n_k}\bigg),
\end{equation}
where $0<m_k'+1 \leq \sum_{j=1}^{r_u}(b_j^u-2)$. Let $c'$ be a positive number such that $K_{S_k}^2=c'$ for every $k\in J$. Then, by \eqref{EqC3} we obtain that $$K_{W_k}^2<c'+\sum_{j=1}^{r_u}(b_j^u-2)+2,$$ 
for every $k\in J$. So, we have that $\{K_{W_k}^2:k\in J\}$ is a bounded set.

Now, we construct an infinite set of indices $J'$ such that the continued fraction of $P_{k_{i+1}}$ is obtained by applying the T-chain algorithm (see Definition \ref{generalized T-singularity}) to the continued fraction of $P_{k_i}$ for every $k_i\in J'$. In fact, let us fix an integer $s\geq r$. By using the formation rule in $\mathcal{B}([b_1^u,\ldots,b_{r_u}^u])$ and the fact that $P_k$ has different continued fraction for every $k\in J$, we know that there exist finitely many $k\in J$ such that $P_k$ has a continued fraction of length $s$. Thus, we can choose $k_0\in J$ such that the continued fraction of $P_k$ is obtained by applying the T-chain algorithm to the continued fraction of $P_{k_0}$ for infinitely many $k\in J$. Let $J_{k_0}\subseteq J$ be a subset of indices with such a property. In the same way, we can choose an index $k_1 \in J_{k_0}$ such that the continued fraction length of $P_{k_1}$ is greater than the length of $P_{k_0}$, and that $P_k$ is obtained by applying the T-chain algorithm to the continued fraction of $P_{k_1}$ for infinitely many $k\in J$. Let $J_{k_1}\subseteq J_{k_0}$ be an infinite set of indices with such a property. By using an inductive argument, we may construct an infinite set of indices $J'=\{k_0,k_1,\ldots\}$ with the desired property. 

Observe that the quotients $2(n_{k_i}-1)-q_{k_i}-q_{k_i}'/n_{k_i}$ are different for each $k_i\in J'$. Indeed, by Proposition \ref{formation rule}, one can compute directly that 
\begin{equation*}
    \dfrac{2(n_{k_{i+1}}-1)-q_{k_{i+1}}-q_{k_{i+1}}'}{n_{k_{i+1}}}<\dfrac{2(n_{k_i}-1)-q_{k_i}-q_{k_i}'}{n_{k_i}}.
\end{equation*}

Therefore, by \eqref{EqC3} we know that $\{K_{W_k}^2:k\in J\}$ is an infinite set which also is bounded. So, we conclude that $\{K_{W_k}^2\}$ has accumulation points.

\end{proof}

\begin{proposition}\label{A2} Let $W$ be a stable surface which has only one generalized T-singularity $P\in W$ with continued fraction $[b_1,\dots,b_{r}]$. Assume that there exists a $(-1)$-curve intersecting both ends of the chain $C$ associated to $P$. Then there exist a sequence $\{W_k\}$ of stable surfaces with only one generalized T-singularity $P_k$ of center $[b_1,\ldots,b_r]$ such that $\{K_{W_k}^2\}$ has an accumulation point.
\end{proposition}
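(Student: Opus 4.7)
The plan is to construct $\{W_k\}$ by iteratively realizing the T-chain algorithm geometrically on the minimal resolution, and then invoke the converse direction of Theorem \ref{A1}. Setting $X_0 := X$, $C^{(0)} := C$, and $F_0 := F$ (the given $(-1)$-curve meeting both ends of $C^{(0)}$), I define $X_{k+1}$ as the blow-up of $X_k$ at the point $F_k \cap C_1^{(k)}$, where $C_1^{(k)}$ is a fixed end of the chain $C^{(k)}$, and $F_{k+1}$ as the new exceptional divisor. A direct intersection calculation shows that the strict transforms $\widetilde{C_1^{(k)}}, C_2^{(k)}, \dots, C_{r_k}^{(k)}, \widetilde{F_k}$ assemble into a new chain $C^{(k+1)}$ with continued fraction $[a_1^{(k)}+1, a_2^{(k)}, \dots, a_{r_k}^{(k)}, 2]$, which is exactly the output of the second rule in Definition \ref{generalized T-singularity}(ii), and that $F_{k+1}$ is a $(-1)$-curve meeting both ends of $C^{(k+1)}$. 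Contracting $C^{(k+1)}$ via Artin's theorem yields a normal projective surface $W_{k+1}$ whose only singular point $P_{k+1}$ is a generalized T-singularity of center $[b_1,\dots,b_r]$.

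Next, I will verify Property (*) of Definition \ref{property (*)} for this sequence. Each blow-up introduces a single $(-1)$-curve $F_{k+1}$ that can be chosen as the first curve contracted in any descent to a minimal model, so $S_k = S$ for all $k$ and hence $K_{S_k}^2 \equiv K_S^2$ is constant. The $(-1)$-curve meeting both ends of $C^{(k)}$ exists by construction for every $k$. Choosing $u_0 \geq 0$ with $P \in \mathcal{B}([b_1^{u_0},\dots,b_{r_{u_0}}^{u_0}])$, which is possible since $P$ itself is a generalized T-singularity of center $[b_1,\dots,b_r]$, an induction on $k$ shows that $P_k \in \mathcal{B}([b_1^{u_0},\dots,b_{r_{u_0}}^{u_0}])$ as well, because operation (ii) of the T-chain algorithm preserves this set. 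Finally, the reduced Hirzebruch--Jung continued fractions of the $P_k$ are pairwise distinct since their lengths grow strictly with $k$.

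The step I expect to be the main obstacle is showing that every $W_{k+1}$ is in fact stable, i.e., that $K_{W_{k+1}}$ is ample. Following the strategy of Example \ref{Example $[4,n_0,4]$}, I will pull back $K_{W_{k+1}}$ to $X_{k+1}$ via the discrepancy formula \eqref{eq2} and write $\phi_{k+1}^{\ast} K_{W_{k+1}}$ as an effective $\mathbb{Q}$-divisor; then, using the ampleness of $K_W$ together with the controlled way in which the $X_{k+1}$ differ from $X$, I will verify positivity against every irreducible curve on $X_{k+1}$ and conclude ampleness via the Nakai--Moishezon criterion. Once stability is established, Property (*) is satisfied, and Theorem \ref{A1} produces the required accumulation point of $\{K_{W_k}^2\}$.
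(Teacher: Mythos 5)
Your construction of the surfaces $X_k$ and $W_k$ is exactly the one in the paper: blow up at $F_k\cap C_1^{(k)}$, obtain the chain $[b_1+(k-1),b_2,\dots,b_r,2,\dots,2]$ together with a new $(-1)$-curve meeting both ends, and contract by Artin's theorem. However, the step you yourself flag as ``the main obstacle'' --- ampleness of $K_{W_{k+1}}$ --- is precisely where the substance of the paper's proof lies, and your sketch does not supply it. Writing $\phi_{k+1}^{*}K_{W_{k+1}}$ as an effective $\mathbb{Q}$-divisor over $f^{*}K_{W_k}$ is not automatic: the paper proves it by comparing discrepancies, i.e.\ by showing $a_j>a_j'$ for all $j$ (Claim \ref{discrepancies}), a genuine computation with the tridiagonal recursion of Remark \ref{triangular matrix}, which yields $\phi^{*}K_{W_{2}}=f^{*}K_{W_1}+(a_1+1)F_2+\sum_j(a_j-a_j')B_j-a_{r+1}'B_{r+1}$ with nonnegative coefficients. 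Even then Nakai--Moishezon requires the two strict inequalities $\phi^{*}K_{W_2}\cdot F_2>0$ and $K_{W_2}^2>0$; the paper obtains these from the explicit arithmetic of Proposition \ref{formation rule} (with $N=2q-m+2n-q'$, $Q=2q-m$, $Q'=q+n$) together with the observation that ampleness of $K_{W_1}$ forces $2+q+q'<n$. None of this appears in your proposal, so the stability of the $W_k$ remains unproved.

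There is a second, independent gap in your closing step. You conclude by verifying Property (*) and invoking the converse direction of Theorem \ref{A1}, but that direction of Theorem \ref{A1} (and Lemma \ref{previous 1.9}, on which it rests) assumes that the minimal model $S_k$ has nef canonical class --- a hypothesis that Proposition \ref{A2} does not impose on $W$. The paper avoids this entirely: once ampleness is established, formula \eqref{temp9} gives $K_{W_{k+1}}^2-K_{W_k}^2=\frac{2+Q+Q'}{N}-\frac{2+q+q'}{n}>0$ while these correction terms remain bounded, so $\{K_{W_k}^2\}$ is a strictly increasing bounded sequence and hence converges; no appeal to Theorem \ref{A1} is needed. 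You should either carry out that direct computation or add the nefness hypothesis before citing Theorem \ref{A1}.
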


\begin{proof} Let us write $W_1:=W$, and let $X_1$ be the minimal resolution of $P$. Let $F_1$ be the $(-1)$-curve intersecting the ends $C_1$, and $C_r$ of $C$.

\textbf{Step 1.} Let $X_2$ be the smooth surface obtained by blowing up at the point in $F_1\cap C_1$. Let $F_2$ be the exceptional curve in $X_2$. Note that $X_2$ has a configuration of $(r+1)$-rational curves (with SNC), and that $F_2$ intersects the ends of the chain. The new configuration has continued fraction $[b_1+1,\dots,b_{r-1},b_r,2]$.

\textbf{Step k.} Let us assume constructed the surfaces $X_1,\dots,X_{k-1}$, inductively. Let $F_{k-1}$ be the exceptional curve in $X_{k-1}$. In the same way of Step 1, we construct a smooth surface $X_k$ which is obtained by blowing up at the point in $F_{k-1}\cap C_{1}$, where $C_{1}^2=-(b_1+k-2)$. Here, we obtain a new configuration of rational curves with SNC, and that $F_k$ (the exceptional curve in $X_{k+1}$) intersects the ends of the chain. That configuration has continued fraction $[b_1+(k-1),b_2\dots,b_r,2,\dots,2]$, where $k-1$ is the number of $2$'s on the right side. Thus, we obtain a sequence $\{X_k\}$ of smooth surfaces with a configuration $[b_1+(k-1),b_2\dots,b_r,2,\dots,2]$, and a $(-1)$-curve intersecting the ends of the chain.

Now, by Artin's contractibility Theorem \cite[Thm. 2.3]{artin1962some}, we may contract the configuration in $X_k$ for every $k$, to obtain a normal projective surface $W_k$ with only one cyclic quotient singularity $P_k\in \mathcal{B}_{[b_1,\dots, b_{r}]}$. In particular, $P_k$ is a generalized T-singularity of center $[b_1,\dots,b_r]$. Thus, to complete the proof, we must show that $K_{W_k}$ is ample divisor for every $k$. In fact, let $a_j$ be the discrepancies of $[b_1,\ldots b_r]$ for $j=1,\ldots,r$, and let $a_j'$ be the discrepancies of $[b_1+1,\ldots,b_r,2]$ for $i=1,\ldots,r+1$. We recall that $-1<a_j,a_j'\leq 0$ for every $j$.

\begin{claim}\label{discrepancies} We have that $a_j>a_j'$ for every $j=1,\ldots,r$.
\end{claim}
Indeed, let $c_j,d_j$ (respectively $c_j',d_j'$) be the auxiliary coefficients defined in Remark \ref{triangular matrix} for $[b_1,\ldots,b_r]$ (respectively for $[b_1+1,\ldots,b_r,2]$ ). We recall that $c_1=-1/b_1$, $c_1'=-1/(b_1+1)$, $d_1=(2-b_1)/b_1$, and $d_1'=(1-b_1)/(b_1+1)$.

We have that $-1<c_j'<c_j<0$ for every $j=1,\ldots r-1$. Indeed, by a direct computation we know that $-1<c_1'<c_1<0$. Let us suppose the statement for $j-1$. Then,  
 \begin{equation*}
   c_j'=\dfrac{-1}{b_j+c_{j-1}'}<\dfrac{-1}{b_j+c_{j-1}}=c_j,  
 \end{equation*}  
one can check directly that $-1<c_j,c_j'<0$ by using that $b_j\geq 2$.

Also, we obtain that $d_j'<d_j$ for every $j=1,\ldots,s$. In fact, we note that $d_1-d_1'=2/(b_1(b_1+1))>0$. Assume the statement for $j-1$. Then,  
\begin{equation*}
   d_{j}'-d_{j}\leq (b_j-2-d_{j-1}')c_j'-(b_j-2-d_{j})c_j'=c_j'(d_{j-1}-d_{j-1}'), 
\end{equation*}
by using $c_j'<0$, and $d_{j-1}'<d_{j-1}$, we obtain that $d_j'<d_j$.

In addition, we note that because $-1<c_s'<0$ then

$$a_r-a_r'=d_r-d_r'(1-(c_r')^2)>d_r-d_r'>0.$$

Now, if we suppose that $a_{j+1}>a_{j+1}'$ then by using $c_j'<c_j<0$ we obtain that

$$a_j-a_j'=(d_j-d_j')+(c_j'a_{j+1}'-c_ja_{j+1})>0.$$

Thus, we obtain that $a_j>a_j'$ for every $j=1,\ldots,r$. This completes the proof of Claim. 

We will use Claim \ref{discrepancies} to prove the ampleness. Let $B_j$ the curves in the configuration on $X_2$, that is $B_j^2=-b_j$, $B_1^2=-(b_1+1)$, and $B_{r+1}^2=-2$. Let $f\colon X_2\to W_1$ be the map which contracts $F_2,B_1,\ldots,B_r$, and let $\phi\colon X_1\to W_2$ be the map which contracts $B_1,\ldots,B_{r+1}$ (the minimal resolution of $W_2$). Then, one can check that

\begin{equation*}
    \phi^*(K_{W_2})=f^*(K_{W_1})+(a_1+1)F_2+\sum_{j=1}^{r}(a_j-a_j')B_j-a_{r+1}'B_{r+1}.
\end{equation*}

By Claim \ref{discrepancies}, we obtain $K_{W_2}$ written  as an effective sum of divisors. So, we only need to check that $K_{W_2}^2>0$,  $\phi^*(K_2)\cdot F_2>0$ to prove the ampleness of $K_{W_2}$. We first recall that $-1-a_1-a_r=1-(2+q+q')/n$, where $[b_1,\ldots,b_r]=n/q$. (See e.g. \cite[Section 2.1]{urzua2016identifying}). 

Because of the ampleness of $K_{W_1}^2$, we obtain that $0<-1-a_1-a_r$. So, we have that $2+q+q'<n$. Let $N,Q$ be the integers such that $[b_1+1,\ldots,b_r,2]=N/Q$. By Proposition \ref{formation rule} we know that $N=2q-m+2n-q'$, $Q=2q-m$, and $Q'=q+n$. So,

\begin{equation}\label{temp8}
    \phi^*(K_2)\cdot F_2=-1-a_1'-a_{r+1}'=\dfrac{n(n-(2+q+q'))}{(n+q)(2n-q')+1},
\end{equation}
then, we obtain directly from \eqref{temp8} that $\phi^*(K_2)\cdot F_2>0$. 

On the other hand, we have that
\begin{equation}\label{temp9}
    K_{W_2}^2=K_{W_1}^2+\dfrac{2+Q+Q'}{N}-\dfrac{2+q+q'}{n},
\end{equation}
and
\begin{equation*}
    (2+Q+Q')n-(2+q+q')N= \dfrac{(n-(2+q+q'))(n^2+2nq-nq'-qq'+1)}{n}>0.
\end{equation*}

Thus, we obtain from \eqref{temp9} that $K_{W_2}^2>0$. Then, by the Nakai-Moishezon criterion we obtain that $K_{W_2}$ is ample. Note that we only use the facts that $K_{W_1}$ is ample, and the formation rule of the configuration in $X_2$ to prove that $K_{W_2}$ is ample. So, we also proved that $K_{W_k}$ ample implies $K_{W_{k+1}}$ ample for every $k$. Therefore, we have that $\{K_{W_k}^2\}$ has accumulation points.
\end{proof}

\begin{remark}\label{data} We recall some useful data from the proof of Proposition \ref{A2}. Let $W_1$ be a stable surface which has only one generalized T-singularity $P_1\in W_1$ with continued fraction $[b_1,\ldots,b_r]$. Let $X_1$ be the minimal resolution of $P_1$. Let $X_2$ be the surface obtained by blowing up $X_1$ as described in Step 1 (see proof of Proposition \ref{A2}). As we saw in the proof, we have that $W_2$ has a generalized T-singularity $P_2$ with the continued fraction associated $[b_1+1,\ldots,b_r,2]$, and such that $K_{W_2}$ is an ample divisor.   
\end{remark}

For the following proposition, let us consider the diagram
\begin{figure}[H]
    \centering

\begin{equation}\label{Diagram1}
\begin{split}
    \xymatrix{&X'_k\ar[d]_{\pi_k}\ar[rd]^{\phi_k}&\\ &\ar[ld]_{\pi}X'_{k_1}\ar[rd]^{\phi}&W'_k\\
S_{k_1}&&W'_{k_1}}
\end{split}  
\end{equation}

\end{figure}
where the birational morphism $\phi\colon X'_{k_1}\to W'_{k_1}$ (Respectively $\phi_{k}\colon X'_{k}\to W'_{k}$) is the minimal resolution of $W'_{k_1}$ (Respectively $W'_k$), and the surface $S_{k_1}$ is the minimal model of $X'_{k_1},X'_{k}$.


\begin{proposition}\label{A3} Under the assumptions of Theorem \ref{A1}, let $\nu_{\infty}\in \mathbb{Q}$ be an accumulation point of $\{K_{W_k}^2\}$. Then, there exists a sequence $\{W'_k\}$ of stable surfaces such that 

\begin{itemize}
\item $W'_k$ has only one generalized T-singularity $P_k$ which is analytically the same singularity of $W_k$ for every $k\in I$, where $I$ is an infinite set of indices.

\item There exist $k_1\in I$ such that for every $k\in I$, the minimal resolution $X'_{k}$ of $W'_{k}$ is obtained by blowing up the minimal resolution $X'_{k_1}$ of $W'_{k_1}$. (See Diagram \ref{Diagram1}).

\item The limit of the sequence $K_{W_k'}^2$ is $\nu_{\infty}$.
\end{itemize}
\end{proposition}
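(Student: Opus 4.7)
The plan is to combine the structural subsequence extraction from the proof of Theorem \ref{A1} with the iterated blowup construction of Proposition \ref{A2}. Since $\nu_\infty$ is an accumulation point, we first pass to an infinite subsequence on which $K_{W_k}^2 \to \nu_\infty$. By Theorem \ref{A1} and Definition \ref{property (*)}, along this subsequence we may arrange that $K_{S_k}^2 = c'$ is constant, that every $W_k$ has a $(-1)$-curve meeting both ends of the resolution chain $C^k$ of $P_k$, and that $P_k \in \mathcal{B}([b_1^u, \ldots, b_{r_u}^u])$ for a fixed $u \ge 0$. Following the argument at the end of the proof of Theorem \ref{A1}, a further extraction guarantees that the continued fraction of $P_{k_{i+1}}$ is produced from that of $P_{k_i}$ by a single application of the T-chain algorithm in Definition \ref{generalized T-singularity}(ii). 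Finally, using the identity
\[
K_{W_k}^2 = c' + \sum_{j=1}^{r_u}(b_j^u - 2) - (m_k' + 1) - \frac{2(n_k - 1) - q_k - q_k'}{n_k}
\]
from Lemma \ref{previous 1.9}, together with the bound $0 < m_k' + 1 \leq \sum_{j=1}^{r_u}(b_j^u - 2)$, the positive integer $m_k' + 1$ takes only finitely many values along the subsequence, so a pigeonhole argument yields an infinite further subsequence $I$ on which $m_k' + 1 \equiv M$ is constant.

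Let $k_1 := \min I$ and set $W_{k_1}' := W_{k_1}$, $X_{k_1}' := X_{k_1}$. For each subsequent $k_i \in I$, the T-chain step from $P_{k_{i-1}}$ to $P_{k_i}$ has the form $[a_1 + 1, a_2, \ldots, a_s, 2]$ or $[2, a_1, \ldots, a_{s-1}, a_s + 1]$. Following Proposition \ref{A2} and Remark \ref{data}, the minimal resolution $X_{k_{i-1}}'$ contains a $(-1)$-curve $F_{k_{i-1}}$ meeting both ends of its chain; we construct $X_{k_i}'$ by blowing up the intersection of $F_{k_{i-1}}$ with the left end, respectively the right end, of that chain. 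The new exceptional curve $F_{k_i}$ continues to meet both ends of the enlarged chain, so the construction iterates. The resulting chain in $X_{k_i}'$ has Hirzebruch--Jung continued fraction equal to that of $P_{k_i}$, so by Artin's contractibility theorem we obtain a normal projective surface $W_{k_i}'$ with a unique cyclic quotient singularity analytically isomorphic to $P_{k_i}$. Ampleness of $K_{W_{k_i}'}$ is obtained inductively by the Nakai--Moishezon argument in Proposition \ref{A2}: the pullback $\phi^*K_{W_{k_i}'}$ is written as an effective sum via the discrepancy comparison there, and the positivity of $\phi^*K_{W_{k_i}'} \cdot F_{k_i}$ and of $K_{W_{k_i}'}^2$ follows from identities of the same shape combined with the recursion for $(n, q, q')$ under a T-chain step supplied by Proposition \ref{formation rule}. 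The left-op case is symmetric to the right-op case treated in Proposition \ref{A2} after reversing the chain.

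To conclude $K_{W_{k_i}'}^2 \to \nu_\infty$, we show $K_{W_{k_i}'}^2 = K_{W_{k_i}}^2$ for every $k_i \in I$. By construction, $X_{k_i}'$ is obtained from $X_{k_1}$ by exactly $i - 1$ blowups, so $S_{k_1}$ remains its minimal model with $K_{S_{k_1}}^2 = c'$, and the total number of blowdowns is $m_{k_1} + (i - 1)$. Substituting into the decomposition $m = m' + (s - r_u) + 1$ used in the proof of Lemma \ref{previous 1.9}, and using that each T-chain step adds exactly one coefficient so $s_{k_i} = s_{k_1} + (i - 1)$, the $m'$-invariant of $W_{k_i}'$ equals $m_{k_1}' = M - 1$, matching the value of $m_{k_i}'$ for the original surface (constant on $I$ by the first step). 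Since the singularity of $W_{k_i}'$ is analytically $P_{k_i}$ and $K_{S_{k_1}}^2 = K_{S_{k_i}}^2$, both instances of the displayed formula produce the same number, and therefore $K_{W_{k_i}'}^2 = K_{W_{k_i}}^2 \to \nu_\infty$. The main obstacle lies in the first step together with the ampleness verification: guaranteeing that the $m'$-invariant is eventually constant on an infinite subsequence and that the sequence of T-chain operations from $P_{k_i}$ to $P_{k_{i+1}}$ can be realized consistently by iterated blowups. Both reduce to pigeonhole on bounded integer invariants and to extending the discrepancy comparison of Proposition \ref{A2} uniformly along both T-chain operations using the recursion of Proposition \ref{formation rule}.
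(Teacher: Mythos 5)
Your proposal is correct and follows essentially the same route as the paper: extract a subsequence via Theorem \ref{A1} on which $K_{S_k}^2$ and (by pigeonhole on the bounded integer $m_k'$) the $m'$-invariant are constant and consecutive singularities are related by the T-chain algorithm, realize each step by the blowup construction of Proposition \ref{A2} and Remark \ref{data}, and conclude $K_{W_k'}^2=K_{W_k}^2$ from the volume formula of Lemma \ref{previous 1.9}. The only slip is the claim that consecutive terms can be arranged to differ by a \emph{single} T-chain step --- the subsequence may skip intermediate singularities, so several steps (and hence several blowups, as in the paper's invocation of Remark \ref{data} ``maybe several times'') may be needed between consecutive indices; this does not affect your bookkeeping, since each blowup increments $m$ and $s$ by one simultaneously and the cancellation in the $m'$-computation persists.
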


\begin{proof} Let $J$ be an infinite subset of indices as in Theorem \ref{A1} such that $\nu_{\infty} \in \textnormal{Acc}(\{K^2_{W_k}:k\in J\})$. Let us choose an infinite subset $J'$ of $J$ such that the subsequence $\{K_{W_k}\}_{k\in J'}$ converges to $\nu_{\infty}$. As we saw in the proof of Theorem \ref{A1}, we can choose an infinite set of indices $J''\subseteq J'$ such that $K_{W_{k_i}}^2$ goes to $\nu_{\infty}$ when $i$ goes to infinity, and also such that the continued fraction of $P_{k_i}$ is obtained by applying the T-chain algorithm to the continued fraction of $P_{k_{i-1}}$.

By using Lemma \ref{previous 1.9}, it follows that for every $k\in J''$
\begin{equation}\label{temp19}
K_{W_k}^2=K_{S_k}^2+\sum_{j=1}^{r_u}(b_j^u-2)-(m_k'+1)-\bigg(\dfrac{2(n_k-1)-q_k-q_k'}{n_k}\bigg),
\end{equation}
where $0<m_k'+1 \leq \sum_{j=1}^{r_u}(b_j^u-2)$. So, we may choose an infinite set of indices $I\subseteq J''$ such that $m_k'$ is constant for every $k\in I$. After renaming the surfaces $W_k$ we may suppose that $I=\mathbb{N}$. 

Let us write $W_1':=W_1$, and let $X_1':=X_1$ be the minimal resolution of $P_1$. Now, let $X_2'$ be the surface obtained by blowing up the configuration $C_1$ associated to $P_1$ such that after contracting the new configuration $C_2$ in $X_2'$, we obtain a normal projective surface $W_2'$ with the generalized T-singularity $P_2$. We remark that is possible because the continued fraction of $P_2$ is obtained by applying the T-chain algorithm to the continued fraction of $P_1$. By using Remark \ref{data} (maybe several times) we obtain that $K_{W_2}$ is an ample divisor. Also, by using the facts that $K_{S_k}^2,m_k'$ are constants for every $k\in I$ in \eqref{temp19}, we obtain that $K_{W'_2}^2=K_{W_2}^2$. 

Finally, by using an inductive argument, we construct a sequence of stable surfaces $\{W_k'\}$ with the desired properties of the statement. 
\end{proof}





To describe the behavior of the accumulation points for stable surfaces with one generalized T-singularity with a fixed center (Theorem \ref{A1}), we used that the canonical class of $S$ is nef. We do not know what happens otherwise. We note that Theorem \ref{deltas} is still valid for $K_S$ not nef, and so it could be used for some further analysis. Also, we are interested in finding properties for generalized T-singularities, like the one that motives the definition of T-singularities in \cite{kollar1988threefolds}.

On the other hand, the general question on how accumulation points show up for stable surfaces with only one cyclic quotient singularity remains open. 

\begin{remark} Given a sequence as in Theorem \ref{A1}, we saw in the proof of Theorem \ref{A1} that every accumulation point of a sequence $\{K_{W_k}^2\}$ can be obtained from a subsequence such that every $W_k$ has only one singularity in the set $\mathcal{B}([b_1^u,\ldots,b_{r_u}^u])$ for a fixed $u\geq 0$. In that case, we recall that $K_{W_k}^2$ are related by the following formula
\begin{equation}\label{quotients}
K_{W_k}^2=c+\sum_{j=1}^{r_u}(b_j^u-2)-(m+1)-\bigg(\dfrac{2(n_k-1)-q_k-q_k'}{n_k}\bigg),
\end{equation}
where $c,m$ are fixed numbers, and $c=K_{S_k}^2$. By Proposition \ref{formation rule}, we know a recursive way of computing the quotients in \eqref{quotients}. So, we have the following question
\end{remark}

\begin{question} Let $\{W_k\}$ be a sequence of stable surfaces as in Theorem \ref{A1}. What are the accumulation points of $\{K_{W_k}^2\}$?
\end{question}


We saw in Proposition \ref{A3} that every accumulation point of stable surfaces with only one generalized T-singularity can be constructed by blowing up a certain configuration of curves in a smooth surface and then contracting the new configuration obtained. Following that idea, we want to finish with the following questions concerning that topic.

\begin{question} Let $\{W_k\}$ be a sequence of stable surfaces such that any $W_k$ has only one cyclic quotient singularity, say at $P_k \in W_k$. Suppose that the minimal model of $W_k$ has canonical class nef. Let $\nu_{\infty}\in \textnormal{Acc}(\{K_{W_k}^2\})$. Then, there is a sequence $\{W'_k\}$ of stable surfaces and an infinite set of indices $I$ such that 

\begin{itemize}
\item $W'_k$ has only one cyclic quotient singularity $P_k$ which is analytically the same singularity of $W_k$ for every $k\in I$.

\item Let $E^k$ be an exceptional divisor in the minimal resolution of $W'_k$ such that $\Gamma_{E^k}$ is maximal (see Definition \ref{maximal graph}). Then, for every $k\in I$ we have that $E^k$ has only one type of diagram. Namely, a diagram of type $(i)$, $(iii)$ or $(iv)$ (see Definition \ref{Long diagram}).

\item There exist $k_1\in I$ such that for every $k\in I$, the minimal resolution $X'_{k}$ of $W'_{k}$ is obtained by blowing up the minimal resolution $X'_{k_1}$ of $W'_{k_1}$ .

\item The limit of the sequence $K_{W_k'}^2$ is $\nu_{\infty}$.
\end{itemize}
\end{question}

\begin{question} Let $\{W_k\}$ be a sequence of stable surfaces such that any $W_k$ has only one cyclic quotient singularity, say $P_k\in W_k$. Assume that singularities $P_k$ are analytically different for every $k$. Let $E^k$ be an exceptional divisor in the minimal resolution of $W_k$ such that $\Gamma_{E^k}$ is maximal (see Definition \ref{maximal graph}). Assume that $E_k$ has only one type of diagram. Namely, a diagram of type $(i)$, $(iii)$ or $(iv)$ (see Definition \ref{Long diagram}). Then, the set $\{K_{W_k}^2\}$ has accumulation points.
\end{question}



  


\bibliographystyle{alpha}
\bibliography{References}



       
\end{document}